\documentclass{siamltex}

\usepackage{bm}
\usepackage{bbm}
\usepackage{hyperref}
\usepackage{graphicx}
\usepackage{amsmath}
\usepackage{amsfonts}
\usepackage{amssymb}
\usepackage{exscale}
\usepackage{multirow}
\usepackage{stmaryrd}
\usepackage{tikz}
\usetikzlibrary{arrows,calc}
\usepackage{color}
\usepackage{soul}
\usepackage{cancel}
\usepackage{mathrsfs}
\usepackage{comment}

\usepackage{bm}
\usepackage{graphicx}

\usetikzlibrary{arrows,calc}
\bibliographystyle{siam}
\numberwithin{figure}{section}

\def\DELETE#1{{\color{cyan}#1}}

\def\QD#1{{\color{cyan}#1}}
\def\XT#1{{\color{brown}#1}}
\def\ZZ#1{{\color{blue}#1}}


\newcommand{\ie}{{\it i.e.}}
\newcommand{\tie}{{that~is}}

\newcommand{\eg}{{\it e.g.}}

\newcommand{\dist}{{\rm{dist}}}

\newcommand{\gfrac}[2]{{{#1}/{#2}}}

\newcommand{\ffrac}{\displaystyle \frac}

\newcommand{\fns}{\footnotesize}

\newcommand{\rme}{\mathrm{e}}
\newcommand{\rmi}{{\mathrm{i}}}

\newcommand{\qfa}{\quad\text{{for all} }}

\def\bal{\begin{aligned}}

\def\ibt{\begin{itemize}}

\def\cds{{C_{\ddd,s}}}
\def\cdotsp{{\,\cdot\,}}

\def\cds{{C_{\ddd,s}}}

\def\ddd{{d}}
\def\D{{d }}

\def\del{{\delta}}
\def\dxb{{d \xb}} 
\def\dyb{{d \yb}} 
\def\dydx{{d \yb d \xb}} 
\def\eal{\end{aligned}}
\def\eit{\end{itemize}}

\def\fh{{\widehat{f}}}

\def\fhvoh{{(\fh,v)_\omgh}}
\def\fhvohd{{(\fh,v)_\omghd}}
\def\fvo{{( f, v)_\omg}}

\def\gam{{\gamma}}

\def\gamd{{\gamma_\del}}

\def\gams{{\gam_{s}}}

\def\gdd{{g_D^\del}}
\def\gdn{{g_N^\del}}
\def\gnv{{{\lrang{\gdn}{v}}_\omgidn}}

\def\lrang#1#2{\langle{#1},{#2}\rangle}

\def\mcA{{\mathcal A}}
\def\mcB{{\mathcal B}}
\def\mcBo{{{\mathcal B}_0}}
\def\mcBd{{{\mathcal B}_{\delta}}}
\def\mcBi{{{\mathcal B}_\infty}}
\def\mcBis{{{\mathcal B}_\infty^s}}

\def\mcDd{{{\mathcal D}_\del}}

\def\mcI{{\mathcal I}}

\def\mcL{{\mathcal L}}
\def\mcLd{{\mcL_\del}}

\def\mcLdr{{\mcL_{\del,\Omega}}}

\def\mcLi{{\mcL_\infty}}
\def\mcLis{{\mcL_\infty^s}}
\def\mcLz{{\mcL_0}}

\def\mcN{{\mathcal N}}
\def\mcNd{{\mcN_\del}}
\def\mcNdr{{\mcN_{\del,\omg}}}

\def\mcNis{{\mcN_\infty^s}}
\def\mcNi{{\mcN_{\infty}}}
\def\mcQ{{\mathcal Q}}

\def\omg{{\Omega}}
\def\omgb{{\overline{\omg}}}
\def\omgc{{\omg_{c}}}
\def\omgbc{{\omgb_{c}}}
\def\omgbd{{\omgb_{\del}}}

\def\omgh{{\widehat{\omg}}}

\def\omghd{{\omgh_\del}}
\def\omghdn{{\omgh_{\del_n}}}
\def\omgtd{{\omgt_\del}}

\def\omgid{{\omg_{\mcI_\del}}}
\def\omgidtwo{{\omg^2_{\mcI_\del}}}

\def\omgidd{{\omg^D_{\mcI_\del}}}
\def\omgidn{{\omg^N_{\mcI_\del}}}
\def\omgio{{\omg_{\mcI_0}}}
\def\omgii{{\omg_{\mcI_\infty}}}
\def\omgiid{{\omg^D_{\mcI_\infty}}}
\def\omgiin{{\omg^N_{\mcI_\infty}}}
\def\omginid{{\omg_{\mcI^0_\del}}}

\def\omghm{{\omghd^2\setminus \omg^2_{\mcI_\del}}}
\def\omgtd{{\widetilde{\omg}_\del}}
\def\omgti{{\widetilde{\omg}_\infty}}
\def\omgp{{\partial\omg}}
\def\omgpd{{\partial\omg^D}}
\def\omgpn{{\partial\omg^N}}

\def\Rd{{{\mathbb R}^\ddd}}

\def\Ro{{\mathbb R}}

\def\VVcd{{V_{c,\del}(\omghd)}}
\def\VVcdo{{V_{c,\del}(\omg)}}
\def\VVcdn{{V_{c,\del_n}(\omghdn)}}
\def\VVsd{{V^{\star}_{\del}(\omghd)}}

\def\VVdel{{V_{\del}(\omghd)}}

\def\VVt{{V_t(\omgidd)}}
\def\VVtp{{V_t(\omg')}}
\def\VVtd{{V_t(\omgidd)}}
\def\VVtn{{V_t(\omgidn)}}
\def\VVtds{{V_t^{\star}(\omgidd)}}
\def\VVtns{{V_t^{\star}(\omgidn)}}

\def\VVV{{V(\omgh)}}

\def\xb{{\bm x}}

\def\xib{{\boldsymbol\xi}}

\def\xyp{{(\xb,\yb)}}
\def\yxp{{(\yb,\xb)}}
\def\yb{{\bm y}}

\def\ymx{{\yb-\xb}}
\def\ymxa{{|\ymx|}}
\def\ymxap{{(|\ymx|)}}
\def\zb{{\bm z}}



\def\mcAd{{\mcA_\del}}

\def\edoc{

\newtheorem{thm}{Theorem}[section]
\newtheorem{prop}[thm]{Proposition}

\newtheorem{remark}[thm]{{Remark}}

\newtheorem{assu}[thm]{{Assumption}}

\newcommand{\tcb}[1]{\textcolor{blue}{#1}}
\newcommand{\tcr}[1]{\textcolor{red}{#1}}
\newcommand{\tcp}[1]{\textcolor{purple}{#1}}
\def\delete#1{{\color{yellow}#1}}
\def\DELETE#1{{\color{yellow}#1}}

\def\QD#1{{\color{brown!70!red}#1}}
\def\ZZ#1{{\color{blue}#1}}
\def\XT#1{{\color{green!50!black}#1}}

\newcommand\eref[1]{{(\ref{#1})}}	
\newcommand{\ignore}[1]{}

\usepackage{subfigure}


\newcommand{\msgs}{\marginpar{*}}
\newcommand{\msg}[1]{\marginpar{\romsg{\fns{\textsf{#1}}}}}
\newcommand{\msgb}[1]{\marginpar{{{#1}}}}
\newcommand{\msgd}{\marginpar{$\delta$}}
\newcommand{\hspz}{\hspace{-6pt}}

\hyphenation{peri-dynamic}\hyphenation{peri-dynamics}

\title{Nonlocal diffusion models with consistent local and fractional
limits\thanks{This work is to be published in {\em  Approximation, Applications, and Analysis of Nonlocal, Nonlinear Models} (The 50th John H. Barrett Memorial Lectures), 2022.
The research of Q.~Du was supported in part by NSF grant DMS-2012562 and DMS-1937254. The research of X.~Tian was supported in part by NSF grant DMS-2111608. The research of Z. Zhou is partially supported by Hong Kong Research
Grants Council (No. 15303122) and an internal grant of Hong Kong Polytechnic University (Project ID: P0031041, Programme: ZZKS)
}}

\author{Qiang Du\thanks{Department of Applied Physics and Applied Mathematics, and Data Science  Institute,
Columbia University, New York, NY 10027, USA (
{qd2125@columbia.edu}).}
       \and Xiaochuan Tian\thanks{Department of Mathematics, University of California, San Diego, CA 92093, USA  (
{xctian@ucsd.edu}).}
\and Zhi Zhou\thanks{Department of Applied Mathematics. Hong Kong Polytechnic University, 
Kowloon, Hong Kong, China
({zhizhou@polyu.edu.hk}).}
}

\begin{document}

\maketitle

\begin{abstract}
For some
{spatially nonlocal diffusion models} with a finite range of nonlocal interactions measured by a
positive parameter $\delta$,  we review their formulation 
defined on a bounded domain subject to various conditions that correspond  to  some inhomogeneous 
data. We consider their consistency to similar inhomogeneous  boundary value problems of classical
partial differential equation (PDE) models as the nonlocal interaction kernel gets localized in the
local $\delta\to 0$ limit, and at the same time, for rescaled fractional type kernels, to corresponding
inhomogeneous  nonlocal boundary value problems of fractional equations in the global $\delta\to \infty$ limit. Such discussions help to delineate issues related to nonlocal problems defined on a bounded domain with inhomogeneous data. 
\end{abstract}


\begin{keywords}
nonlocal models,
nonlocal diffusion,
peridynamics, fractional PDEs, well-posedness, 
inhomogeneous data
\end{keywords}

\begin{AMS}
47G10, 46E35, 	35R11,35J25
\end{AMS}

\pagestyle{myheadings}
\thispagestyle{plain}
\markboth{DU AND TIAN AND ZHOU}{Inhomogeneous Nonlocal Models}

\section{Introduction}\label{intro}

There has been much interest in nonlocal models 
 \cite{amrt:10,bucur2016nonlocal,du19cbms,d2020numerical,gilboa:1005}.
  A characterization of nonlocal models can be found in  \cite{det19cm}, together with a study on the close connections between nonlocal modeling and other mathematical subjects like homogenization, model reduction, and coarse-graining.
 
 Motivated by studies from mechanics to image analysis and from traffic flows of autonomous vehicles to
anomalous diffusion,  a growing area of research is the study on models with nonlocal interactions that span a finite range  \cite{du19cbms,du12sirev}, measured by the horizon parameter $\delta\in\mathbb{R}$, as we will describe in details later.  It should be noted that in such models, the interaction range is generically nonzero (thus nonlocal) but, for problems on a bounded domain, may not be on the same scale as the size of the whole domain. 
In mathematical modeling, nonlocal models with a finite $\delta$,  simply  referred to as nonlocal models here, can play a number of roles \cite{du19cbms}:
\begin{itemize}
\item  They can either complement or serve as an alternative to
 traditional PDE-based local models, and they
 can account for nonlocal interactions explicitly and remain
valid for not only smooth but also singular solutions. Examples include peridynamic  (PD) models for fracture mechanics
\cite{dt-Silling16hbk,dtt18je} and nonlocal models of anomalous diffusion  and transport \cite{du12sirev,
MetzlerKlafter:2000,MeKl04,amrt:10}.

\item They can serve as tools to better understand and simulate existing local PDE models. Popular
numerical methods like Smoothed Particle Hydrodynamics (SPH) and Vortex-Blob methods are good examples of discretizing PDEs through nonlocal smoothing/averaging \cite{CoKo00,mono:05,DuTi19fcm,DuShi21}.

\item They can serve as bridges to build connections between continuum PDEs, discrete algebraic, graph and network models, metric geometry, and fractional differential equations \cite{burago2014graph,Coifman05,Coifman06,du17hbk4,TiDu19,tdg16acm,du19cbms}.
\end{itemize}

 The main goal here is related to the bridging roles mentioned above. That is, the discussion is centered on connecting local PDEs with fractional PDEs
 through nonlocal equations with a finite range of interactions via a unified description. In particular, we reexamine some concepts presented in \cite{du12sirev} on nonlocal diffusion models with a finite $\delta$, and make further investigations so that the theory can be consistent with both the local and fractional limits.
 While the related issues have been subject to earlier investigations, see e.g. \cite{du12sirev,tdg16acm,d2020numerical}, most known results are stated for problems with homogeneous boundary data (or nonlocal constraints).  By presenting problems in more general settings that involve inhomogeneous data, we also provide some insight on how to connect some of the relevant concepts concerning nonlocal boundary conditions (volumetric constraints \cite{du12sirev}). 

\section{Local, nonlocal and fractional models}\label{model}

For illustration, we consider problems defined on
 a bounded, open domain $\omg\subset\Rd$ with a Lipshitz boundary $\omgp$. 
   We adopt a set of notation that is largely similar to those used in \cite{d2020numerical} but with necessary modifications in some cases.
   
    Let the closure of $\omg$ be denoted by $\omgb=\omg\cup\partial \omg$.
The complements are denoted by
  $\omgc=\Rd\setminus\omgb$ and $\omgbc=\Rd\setminus \omg$ respectively. Note that $\omgbc=\overline{\Rd\setminus \omg}
  =\overline{\omgc}$. 

Given the domain $\omg$ and a positive parameter $\delta>0$, we define an {\em interaction layer of width $\delta$} by
\begin{equation}\label{str-inter}
\omgid := \{ \yb\in\omgc \;\mid \; \text{dist}(\yb, \omg) <\delta\,\}.
\end{equation}
We let $\overline{\omgid}$ denote the closure of
$\omgid$.
$\omghd=\omg\cup\omgid$
 and $\omgbd=\overline{\omg\cup\omgid}=\omgb\cup \overline{\omgid}$.
In this work, we are interested in some special cases and limiting regimes  
\begin{equation}\label{str-dcases}
\delta=0, \quad 0\leftarrow \del\ll \text{diam\,}\omg,\quad
\del \approx \text{diam\,}\omg,
\quad \text{diam\,}\omg \ll \del \to \infty, \quad \del = \infty.
\end{equation} 
Note that
 $\omgio=\emptyset$ and $\overline{\omgio}=\omgp$, while $\omgii=\omgc$ and $\overline{\omgii}=\omgbc$.
 
 We also define some domains for any pair of points $\xyp$. These include $\omg^2=\omg\times\omg$, which denotes the tensor product of any domain $\omg$ with itself and
 \[\omgtd=\omghm\quad\text{and}\quad 
\omgti=(\Rd)^2\setminus \omg_c^2.
 \]

\subsection{Local diffusion models}\label{pde}
We first consider a classical Poisson problem 
for an unknown solution  $u \colon  \Omega\subset\Rd\to \mathbb{R}$,
\begin{equation}\label{str-pde}
\begin{cases}
   -\mcLz u(\xb) :=
    -(\Delta u)(\xb)  = F(\xb,u(\xb)), & \quad\forall\,  \xb\in\omg,
\\
        \mcBo u(\xb)  = g_0(\xb), & \quad\forall\,  \xb\in \overline{\omgio}= \omgp,
\end{cases}
\end{equation}
where  $F(\xb, u)$ 
and $g_0(\xb)$ denote given functions defined on $\omg\times \Ro$ and $\partial\omg$, respectively. Here, we elect to work with the Laplacian $\Delta$ rather than more general diffusion operators. 
A special case is that $F(\xb,u)$ is independent of $u$, thus leading to a standard linear PDE on $\omg$.
For the boundary conditions operator $\mcBo$, we have the choices
\begin{equation}\label{str-bc}
\mcBo u(\xb) = \begin{cases}
u(\xb),   & \xb\in \partial\omg^D\; \mbox{(Dirichlet BC)},
\\
\ffrac{\partial u}{\partial \mathbf{n}}(\xb) &\xb\in\partial\omg^N\; \mbox{(Neumann BC)},
\end{cases}
\end{equation}
where $\omgpd$ and $\omgpn$ are subset of $\omgp$ such that
$\omgpd\cup\omgpn=\omgp$ with $\omgpd\cap\omgpn$ having zero $d-1$ dimensional measure. Note that
$\mcBo$ can also take on other forms, (e.g.,
\begin{equation}\label{str-robin}
\mcBo u(\xb) = \ffrac{\partial u}{\partial \mathbf{n}}(\xb) - \alpha u(\xb),\quad \forall
\xb\in\partial\omg^N 
\end{equation}
for a positive constant $\alpha>0$) 
to yield other (e.g., Robin type) boundary conditions.
In the case $\omgpd=\omgp$, we have a pure Dirichlet problem while $\omgpn=\omgp$ leads to a pure Neumann problem. In the latter case, compatibility conditions may be needed on the data, and an additional constraint (e.g., having a zero mean over $\omg$) on the solutions may be imposed to ensure uniqueness.
 
 \begin{remark}
 {Problem} \eqref{str-pde}, which is named the local diffusion problem here, is a well-studied local PDE model used for steady-state normal diffusion.
\end{remark}

\subsection{Fractional diffusion models}\label{fpde}
Fractional PDEs are PDEs with fractional-order partial derivatives of unknown solutions. They are nonlocal integral models with a global or infinite range of interactions.
Here, we consider a nonlocal fractional diffusion model associated with the {\em integral fractional Laplacian} $\mcLis
= - (-\Delta)^{s}${,} which, {for  $u \colon  \Rd\to \mathbb{R}$ and $s\in (0,1)$, is given by}
\begin{align}\label{intfl}
- \mcLis u(\xb)
=  (-\Delta)^{s} u ({\xb})  & =  \int_{\Rd} ( u( {\xb})-u( {\yb}) ) \gams\ymxap \dyb,
  \quad\quad\forall\,  \xb\in\Rd
  \end{align}
where the kernel function is defined by 
\begin{equation}\label{eq:gams}
 \gams(|\zb|)=\ffrac{\cds}{ |\zb|^{\ddd+2s}}, \quad \text{with} \quad  \cds = \ffrac{2^{2s}s\Gamma (s+\gfrac{\ddd}{2})}{\pi^{\ddd/2}\Gamma (1-s)},
\end{equation}
where $\Gamma$ denotes the {gamma} function.  Note that the singular integral in \eqref{intfl} should be interpreted in the {principal value} sense. 

The \emph{fractional diffusion problem} considered here is then given by
\begin{equation}\label{eq:fracPoisson}
\begin{cases}
-\mcLis u(\xb) = F(\xb,u(\xb)), &\quad\forall\, \xb\in\omg{,}
\\
     \mcBis u(\xb)=g_\infty(\xb),&\quad\forall\, \xb\in{\omgii}=\omgc ,
\end{cases}
\end{equation}
where $F(\xb, u)$ is as before and $g_\infty(\xb)$ is a given data defined for $\xb\in\omgii$. The operator $\mcBis$ is defined by
\begin{equation}\label{str-fc}
\mcBis u (\xb) = \begin{cases}
u(\xb),   & \xb\in \omgiid\; \mbox{(Dirichlet VC)},
\\
\mcNis u(\xb),  & \xb\in \omgiin\;  \mbox{(Neumann VC)},
\end{cases}
\end{equation}
where  $\omgiid$ and $\omgiin$ are two disjoint subdomains of
$\omgii$ such that $\overline{\omgiid\cup\omgiin}=\omgbc$.
$\mcNis$ is a suitably defined nonlocal  operator, for example,  given by  \cite{Dipierro:2017},
\begin{equation}\label{eq:mcns}
  \mcNis u({\xb})= - \int_{\omg} (u({\yb})-u({\xb}))  \gams\ymxap  {\dyb},  \quad\quad\forall\,  \xb\in{\omgii=\omgc}.
\end{equation}
In the above definition, $\mcNis$ is defined as an integral over the domain $\omg$ only, instead of the whole space $\Rd$. 

\begin{remark}
Note that the constraint $\mcBis u = g_\infty$ is applied on a domain having a nonzero volume in $\Rd$, thus representing a nonlocal boundary condition which is also called a {\em volume constraint} (VC) \cite{du12sirev}.
\end{remark}

The integral fractional Laplacian is sometimes called the whole space fractional Laplacian as the integral ranges the whole space $\Rd$.
It is a special case of the nonlocal  Laplacian defined later (see  \eqref{str-lap} and related discussions in section \ref{nonlocal}). It is also equivalent to the Fourier representation \cite{Valdinoci:2009}
\begin{align}\label{E:definition_spectral}
(-\Delta)^{s} u(\xb) &= \ffrac{1}{(2\pi)^\ddd} \int_{\Rd} |\xib|^{2s}  ( u , \rme^{-\rmi \xib \cdot \xb} ) \,\rme^{\rmi \xib \cdot \xb} \D \xib = \mathcal{F}^{-1} ( |\xib|^{2s} \mathcal{F}\{u\}(\xib) )(\xb),
\end{align}
where $\mathcal{F}$ denotes the Fourier transform. Formally if we let $s\to 1^-$ in the above, it recovers the Fourier spectral representation of the classical local Laplacian $-\Delta$.  Indeed, it is well known that  $(-\Delta)^s u(x) \rightarrow -\Delta u(x)$ as $s\to 1^-$ for any function $u$ with sufficient regularity; see \eg\ \cite[Theorems~3 and~4]{Stinga:2019} and \cite[Proposition~4.4]{FracGuide:2012}.

\begin{remark}
The fractional Laplacian \eqref{intfl} is often used to model superdiffusion. In probabilistic terms, in contrast to the local Laplacian that corresponds to the normal diffusion described by Brownian motion, the fractional Laplacian can be connected to superdiffusion associated with L\'evy flights in which the length of particle jumps follows a {heavy-tailed} power law distribution, reflecting the long-range interactions between particles, see \eg\ \cite{Dipierro:2017,MetzlerKlafter:2000,Sokolov:2022}.
\end{remark}

The problem \eqref{eq:fracPoisson} is called a fractional diffusion model here to distinguish it from other diffusion models. It is given in the distribution sense so that the equations are only specified over an open domain $\omg$.
More rigorous descriptions of \eqref{eq:fracPoisson} will be discussed later.

Other variants of the operators  $\mcLis$ and $\mcNis$ are discussed in section \ref{sec:regional}. For notation simplicity, we drop the superscript $s$ in $\mcLis$,
$\mcBis$ and $\mcNis$ whenever there is no ambiguity, that is 
 $\mcLis=\mcLi$,
$\mcBis=\mcBi$ and $\mcNis=\mcNi$.
Meanwhile, 
similar to the local case, one can also consider other volume constraints
such as 
$\mcBi u (\xb) = \mcNi u(\xb) - \alpha u (\xb) $ for $\xb \in \omgc$ to mimic a Robin type constraint.

\subsection{Nonlocal diffusion models}
\label{nonlocal}
To distinguish from the local and fractional counterparts, we consider a class of special nonlocal diffusion models here associated with a finite range of interactions, measured by the so-called horizon parameter (interaction radius) $\delta>0$.

For $\del>0$, we consider the following nonlocal problem for a scalar-valued unknown function $u(\xb)$ defined on $\omghd$, 
\begin{equation}\label{str-vcp}
\begin{cases}
  - \mcLd u =  F(\xb, u(\xb)), & \quad\forall\,  \xb\in\omg,
\\
  \mcBd u = g_\del(\xb), &\quad\forall\,  \xb\in\omgid,
\end{cases}
\end{equation}
with $F$ and $g_\delta$ given and a {\em nonlocal diffusion operator} defined by
\begin{equation}\label{str-lap-e}
\mcLd u {(\xb)} := \int_{\Rd} (u(\yb)-u(\xb)) \gamd\xyp \dyb,
\quad\quad\forall\,  \xb\in\Rd,
\end{equation}
where the kernel function $\gamd\xyp$ is symmetric, 
\ie, 
$$\gamd\xyp=\gamd\yxp, \quad \forall \xb, \yb\in \Rd,$$ 
non-negative, and with a compact support in $\ymx$ in $B_\delta({\bf 0})$,
the $\delta$-ball of the origin.
\tie, 
$$\gamd\xyp=0 \text{ whenever } |\ymx|>\del.$$ 
Again, the integral in \eqref{str-lap-e} (and the one in \eqref{str-lap}, which will be introduced shortly) should be understood in the sense of principal value throughout the paper. Moreover, due to the compact support of $\gamd$, for $\xb\in\omg$, we can rewrite \eqref{str-lap-e} as
\begin{equation}\label{str-lap-re}
\mcLd u (\xb) := \int_{\omghd} (u(\yb)-u(\xb)) \gamd\xyp \dyb,
\quad\quad\forall\,  \xb\in\omg.
\end{equation}

Analogous to \eqref{str-bc}, we have the choice of volume constraints 
\begin{equation}\label{str-vc}
\mcBd u (\xb) = \begin{cases}
u(\xb)  & \xb\in \omgidd\; \mbox{(Dirichlet VC)},
\\
\mcNd u(\xb),  & \xb\in \omgidn\;  \mbox{(Neumann VC)},
\end{cases}
\end{equation}
where  $\omgidd$ and $\omgidn$ are two disjoint subdomains of
$\omgid$ such that $\overline{\omgidd\cup\omgidn}=\overline{\omgid}$.
The  operator $\mcNd$ can be defined, for example, analogously to \eqref{eq:mcns} by
\begin{equation}\label{str-flux}
\mcNd u(\xb)  := - \int_\omg (u(\yb)-u(\xb)) \gamd\xyp \dyb,
 \quad\forall\,  \xb\in\omgid. 
\end{equation}
Note that generally, $\mcNd u$ only needs to be defined and applied on
 $\omgidn$. Its definition on $\omgid$ is given for the convenience of notation only.

Rather than considering kernels in the general form, we focus on
those translation-invariant (\ie, functions of $\ymx$) and radial symmetric (\ie, depending only on $\ymxa$).
In particular, we consider a more specific re-scaled form given by
\begin{equation}
 \label{eq:scale}
  \gamd\xyp  =
 C_d(\del) \gamma\biggl(\ffrac{|\ymx|}{\del}\biggr)
 \end{equation}
 for a kernel $\gamma( {|\cdot |})$ that satisfies
\begin{equation}
\label{eq:scal0}
\left\{ \begin{array}{l}
\gamma(s)\geq 0, \text{ and non-increasing, }\forall s\in [0,1), \;\;
 \gamma(s)=0,\forall s\geq 1,\\
\widehat \gamma (|\zb |)=|\zb|^2\gamma(|\zb|)\in L^1_{\text{loc}}(\Rd).
\end{array}\right.
\end{equation}
For such kernels, we see that $\mcLd$ can also be written as
\begin{equation}\label{str-lap}
\mcLd u {(\xb)} :=  C_{d}(\del)\int_{\Rd} (u(\yb)-u(\xb)) \gam \left(\frac{\ymxa}{\delta}\right) \dyb,
\quad\quad\forall\,  \xb\in\Rd.
\end{equation}

The scaling factor $C_d(\del)$ is of special interest in this work and is defined differently depending on the regimes that we study. We give two examples below.
\begin{itemize}
  \item 
For $0<\del\ll \text{diam\,}(\omg)$,
$C_d(\del)$ is chosen to satisfy
\begin{equation}\label{eq:scal1}
 \int_{\Rd} 
 \ymxa^2
 \gamd\xyp d\yb = 
{\del^{d+2}}C_d(\del)  \int_{{B_1({\bf 0})} } 
 |\bm \xi|^2
 \gamma(|\bm\xi|) \D \bm\xi=2d.
\end{equation}
so that as $\del\to 0$, the local limit of $\mcLd$ becomes the standard Laplacian $\mcLz=\Delta$ \cite{du12sirev,du19cbms,md15non,md16na}.
\item For
$\del\gg \text{diam\,}(\omg)$, the factor and kernels are chosen to satisfy
\begin{equation}\label{eq:scal2a}
C_d(\del)=
\frac{C_{d,s}}{\del^{d+2s}}
 \quad\text{for}\quad \gam(t)=t^{-\ddd-2s} \mathbbm{1}_{(0,1)},
\end{equation}
so that $C_d(\del)\gam(t/\del)=C_{d,s}t^{-d-2s}\mathbbm{1}_{(0,\del)}$,
\tie, $\gamd\xyp$ is a truncation of the 
fractional kernel $\gams$ defined in \eqref{eq:gams}
\begin{equation}\label{eq:scal2}
\gamd\xyp=\begin{cases}
\gams\ymxap
=\ffrac{\cds}{\ymxa^{\ddd+2s}}, &
\quad \forall\: \ymx
\in B_\del({\bf 0}),\\
0, & \quad \text{otherwise.}
\end{cases}
\end{equation}
Consequently, we have $\mcLd\to \mcLis=-(-\Delta)^s$ as $\del\to \infty$
\cite{MG:2013,burko1,tdg16acm}.
\end{itemize}

\begin{remark}
Although $\mcLd u(\xb)$ is defined for any  $\xb\in\Rd$, independent of any spatial domain, for problem \eqref{str-vcp}, we only need 
$\mcLd u {(\xb)}$ for $\xb\in\omg$. Then, 
with a compactly supported $\gamd$, the integral in \eqref{str-lap} can be equivalently defined over  $\omghd$ instead, \ie,
\begin{equation}\label{str-lap-equ}
\mcLd u {(\xb)} := \int_{\omghd} (u(\yb)-u(\xb)) \gamd \xyp \dyb,
\quad\quad\forall\,  \xb\in\omg.
\end{equation}
\end{remark}

\begin{remark}
The operator $\mcLd$ defined by \eqref{str-lap}, with a kernel scaled by \eqref{eq:scal1} or \eqref{eq:scal2}, 
is a nonlocal analog of the 
partial differential 
operator $\mcLz=\Delta$ 
or the fractional operator $\mcLis=-(-\Delta)^s$.
Likewise, the nonlocal model \eqref{str-vcp} is a nonlocal analog of the local diffusion model \eqref{str-pde} 
and the fractional diffusion model \eqref{eq:fracPoisson}.
Nonlocal diffusion problems have also been extensively studied in recent years, we refer to the literature review given in \cite{du12sirev} and \cite{d2020numerical}. In addition, one can find additional references in a few more recent works \cite{d2020physically,Foss21,pang2020npinns,zhang2021second}.
\end{remark}

\begin{remark}
We note that, different from \cite{d2020numerical,du13m3as}, the operators $\mcLd$ and
$\mcNd$, 
involving  the respective domains of integration in their definitions, are adopted as in \cite{du19cbms,djlq20sirev}
to be consistent with the fractional version, see e.g. \cite{Dipierro:2017,dyda2019function}, given by
\eqref{eq:mcns}. We also consider their variants, as well as 
the variants of the fractional operators
later in the paper.
\end{remark}

\begin{remark}
\label{rmk:nlrobindef}
Concerning the operator $\mcBd$ in \eqref{str-vc},
one may consider more general forms. For example,
with $\omgid=\omgidd$, we may have
\begin{equation}
\label{nonlocalrobin}
\mcBd u (\xb) = \mcNd u(\xb) -
{\mcDd}
u(\xb), \;\forall  \xb\in \omgid
\end{equation} where the linear operator 
{$\mcDd$} is given by
\begin{equation}\label{str-gvc}
{\mcDd} u(\xb) = \int_{\omgid} u(\yb)\sigma_\delta(\xb,\yb)d\yb, \;
\forall \xb\in \omgid=\omgidd
\end{equation}
where the 
$\sigma_\delta(\xb,\yb)$ can be a bounded  and symmetric kernel
such that as  $\delta\to 0$, 
\begin{equation}
\label{str-robin-vc}
\int_{\omgid} {\mcDd} u(\xb) v(\xb) d\xb \to \int_{\partial\omg} u(\xb) v(\xb) d\xb ,
\end{equation}
for any pair of smoothly defined functions $u$ and $v$ on $\omg$. In this case, we get a problem analogous to the local Robin boundary value problem.
\end{remark}

\begin{remark}
While we focus on nonlocal diffusion operators
 defined by \eqref{str-lap} with a symmetric kernel
 for illustration, we note that more general nonlocal diffusion-convection operators with a nonsymmetric kernel have also been studied, see, for instance, \cite{du14dcdsb,huang15thesis,du15cmame,tjd17cmame,DElia2017nlcd}.
\end{remark}

Note that for the pure Neumann case, \ie, $\omgidn={\omgid}$,
we impose a constraint on the solution $u$ to have mean $0$ over $\omghd$ and the equations in \eqref{str-vcp} are modified to 
\begin{equation}\label{str-vcpn}
\begin{cases}
  - \mcLd u =  F(\xb, u(\xb)) - \lambda_N, & \quad\forall\,  \xb\in\omg,
\\
  \mcNd u = \gdn(\xb) -\lambda_N, &\quad\forall\,  \xb\in\omgid,
\end{cases}
\end{equation}
where $\lambda_N$ is a constant Lagrange multiplier satisfying
\begin{align}\label{str-vcpn-c}
    \int_\omg F(\xb,u(\xb))\,\dxb + \int_\omgid \gdn(\xb) = \lambda_N |\omghd|,
\end{align}
where
$|\omghd|$ 
denotes the nonzero measure of $\omghd$.

\subsection{Regional fractional and nonlocal  Laplacians}\label{sec:regional}

An operator related to the fractional Laplacian, but defined with respect to the domain $\omg$, is the {\em regional fractional Laplacian} given by \cite{BogdanBurdzyChen:2003,ChenKim:2002}
\begin{equation}\label{intflreq}
  (-\Delta) _{\Omega}^{s} u ({\xb})  =     \int_{\omg} (u( {\xb})-u( {\yb}))  \gams\xyp \dyb{,}\quad \forall\, \xb\in \omg.
\end{equation}
It can be used to define a problem given by
\begin{equation}\label{eqn:reg-fractional}
  (-\Delta) _{\Omega}^{s} u ({\xb})  = 
 F(\xb, u(\xb)),\quad \quad\forall\,  \xb\in\omg,
\end{equation}
{which} is another approach to generalize the local Poisson Neumann problem to the nonlocal fractional case \cite[Section 7]{Dipierro:2017}. Note that in comparison with the fractional Laplacian $\mcLis$ defined in \eqref{intfl},
the only difference is that the regional operator in \eqref{intflreq} involves an integral over $\omg$ instead of $\Rd$.

Similarly, a nonlocal problem  on $\omg$  related to the regional fractional Laplacian  is
\begin{equation}\label{eqn:reg-nonlocal}
  - \mcLdr u (\xb) = F(\xb, u(\xb)), \quad  \quad\forall\,  \xb\in\omg,
\end{equation}
where the {\em regional nonlocal diffusion operator} $\mcLdr$ is given by
\begin{equation}\label{eqn:reg-nonlocal-op}
  \mcLdr u(\xb) = \int_{\omg} (u( {\yb})-u( {\xb}))  \gamd \xyp \dyb{,}  \quad  \quad\forall\,  \xb\in \omg.
\end{equation}
Then \eqref{eqn:reg-fractional} is a special case of \eqref{eqn:reg-nonlocal} by choosing special fractional kernels as in \eqref{eq:scal2a}
and setting $\delta=\infty$. Hence, {for kernels of the form
\eqref{eq:scal2a},} we may see $\mcLdr$ as a ``regional" nonlocal Laplacian 
with respect to the domain $\omg$. 

Indeed, let $\omg=\omg_1\cup\omg_2$ with $\omg_1\cap\omg_2=\emptyset$,
the problem \eqref{eqn:reg-nonlocal}, by a simple change of notations, can be equivalently written as the nonlocal problem:
\begin{equation}\label{str-vcr}
\left\{\begin{array}{ll}
  - \mcLdr u(\xb) =  F(\xb, u(\xb)) \quad & \quad\forall\,  \xb\in \omg_1,\\
  \mcNdr  u (\xb) =  G(\xb, u(\xb))  \quad & \quad\forall\,  \xb\in
  \omg_2.\end{array}\right.
\end{equation}
Here, a nonlocal operator $\mcNdr$ is introduced, which
is defined by
\begin{equation}\label{str-fluxt}
   \mcNdr u(\xb)  := -\mcLdr u(\xb)=\int_{\omg} (u(\xb)-u(\yb)) \gamd\xyp \dyb, \quad \forall
   \xb\in
  \omg_2,
\end{equation}
and $G(\xb, u(\xb)) = F(\xb, u(\xb))$ on 
$\omg_2$.
$\mcNdr$ and $G(\xb, u(\xb))$
only carry symbolic meanings in this case and the equivalence of
\eqref{eqn:reg-nonlocal} and \eqref{str-vcr} holds independently of how
$\omg$ is decomposed into $\omg_1$ and $\omg_2$.  For example, 
 a special choice of the domain $\omg_2$, for $\delta$ sufficiently small,  is the interior $\del$-layer
of $\omg$ given by
\begin{align}
    \label{interior-del}
\omginid=\{ \xb\in \omg \:\mid\: \text{dist}(\xb,\omgp) < \del\,\}.
\end{align}

\begin{remark} Note that the definition \eqref{str-fluxt} of $\mcNdr$ has been adopted in discussions in \cite{d2020numerical,du13m3as}, except for the use of different notation for the geometric domains. It can be seen that $\mcNdr$ adopts the same definition as $-\mcLdr$ except that while 
$-\mcLdr u(\xb)$ is applied on $\omg_1$, 
$\mcNdr u(\xb)$ inherits the definition on
$\omg_2$. 
\end{remark}

\begin{remark} We note that the reformulation \eqref{str-vcr} of
\eqref{eqn:reg-nonlocal}  resembles more closely, in appearance, the conventional way of presenting the problem as an equation coupled with a boundary condition (or say volume constraint). Yet, the equivalence of the formulations also highlights the fact that for nonlocal problems, the notion of boundary conditions can be superfluous. A more general way to understand the formulation is that a nonlocal model is largely described by the law of nonlocal interactions that could be altered due to the presence of a boundary.
\end{remark}

\begin{remark}
We also note that the nonlocal diffusion operator $\mcLd$ reformulated in \eqref{str-lap-equ}, with the integral being over $\omghd$ instead of $\Rd$ for compactly supported kernels of an interaction range $\delta$, may have the appearance of being associated with a ``regional" nonlocal 
{diffusion operator}
defined on the combined domain $\omghd$. However, this association can only be meaningful in the case where the equation \eqref{str-lap-equ}, rather than being confined to $\omg$, is extended to all $\xb\in \omghd$.
\end{remark}

Let us examine some limiting cases formally. First, with the kernel $\gamd\xyp$ properly scaled with respect to the horizon parameter $\delta$ as in \eqref{eq:scal1}, we see that  \eqref{eqn:reg-nonlocal}, thus \eqref{str-vcr}, is expected to converge as $\del\to 0$ to a Neumann problem of the local diffusion operator with the corresponding boundary condition determined by the limiting behavior of $F$ on the interior $\del$-layer
$\omginid$ defined in \eqref{interior-del}.

Likewise, with $\gamd\xyp$ chosen as in \eqref{eq:scal2}, then  \eqref{str-vcr} is expected to recover
exactly the regional fractional Laplacian Neumann problem \eqref{eqn:reg-fractional} if $\del>\text{diam}(\omg)$. We thus see the {regional} nonlocal Laplacian again provides a bridge between
the local Laplacian and the regional fractional Laplacian.  

We note that spectral decomposition is another
approach to define fractional differential operators and nonlocal integral operators.
Generically, such spectrally represented operators might not yield operators with a finite range of interactions spatially. In many cases, the resulting nonlocal kernels for their spatial integral formulations might not have simple analytical representations in real space. We thus do not consider further in this direction, interested readers can find additional discussions in  \cite{Abatangelo:2017,bonito:2018,CaSi07,dyda2019function,Nochetto:2015,
stinga2010fractional,Servadei:2014,Antil:2018,Cusimano:2018}.

\subsection{Nonlocal flux and Nonlocal Green's identities}
 In \cite{du13m3as,d2020numerical}, the operator $\mcNdr$ is also used to provide a nonlocal Green's formula that shares more symbolic
resemblance to the classical Green's formula for the local Laplacian and for defining a nonlocal flux.  We provide a more rigorous statement and derivation here for clarity.

\begin{prop}
 \label{prop:green}
Given the nonlocal operators defined by a rescaled kernel
\eqref{eq:scale} with the conditions 
\eqref{eq:scal0} and
\eqref{eq:scal1} satisfied, 
for functions $u$ and $v$ in $C^2(\overline{\omg\cup\omg'})$, we have the nonlocal Green's first identity given by 
\begin{align}\label{str-rgfi0}
& \frac{1}{2}\int_{(\omg\cup\omg')^2} (v(\yb)-v(\xb))(u(\yb)-u(\xb)) \gamd\xyp
\dydx 
=-\int_{\omg\cup\omg'} v(\xb) \mcL_{\del,\omg\cup\omg'} u (\xb)\dxb 
\notag  \\*
&\quad  =-\int_{\omg} v(\xb) \mcL_{\del,\omg\cup\omg'} u (\xb)\dxb + \int_{\omg'} v(\xb) \mcN_{\del,\omg\cup\omg'} u (\xb)\dxb.
\end{align}
This also readily implies the also the nonlocal Green's second identity
\begin{align}\label{str-rgfi}
&\int_{\omg} v(\xb) \mcL_{\del,\omg\cup\omg'} u (\xb)\dxb -\int_{\omg} u(\xb) \mcL_{\del,\omg\cup\omg'} v (\xb)\dxb  \notag \\*
&\quad  = - \int_{\omg'} u (\xb) \mcN_{\del,\omg\cup\omg'} v (\xb)\dxb + \int_{\omg'} v(\xb) \mcN_{\del,\omg\cup\omg'} u (\xb)\dxb.
\end{align}
\end{prop}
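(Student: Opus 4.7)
The plan is to establish the first identity directly by a symmetrization argument exploiting $\gamd\xyp=\gamd\yxp$, then derive the second identity by swapping the roles of $u$ and $v$ and subtracting. The main technical point is to justify Fubini's theorem despite the possibly singular kernel, which is where the $C^2$ hypothesis on $u,v$ combines with the assumption $\widehat\gam(|\zb|)=|\zb|^2\gam(|\zb|)\in L^1_{\mathrm{loc}}$ to guarantee absolute integrability of the bilinear form.

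First I would expand
\[
(v(\yb)-v(\xb))(u(\yb)-u(\xb)) = v(\yb)(u(\yb)-u(\xb)) - v(\xb)(u(\yb)-u(\xb)),
\]
and then rename $\xb\leftrightarrow\yb$ in the term containing $v(\yb)$. By the symmetry of the kernel, this renaming turns that term into $-v(\xb)(u(\yb)-u(\xb))\gamd\xyp$, so
\[
\tfrac12\int_{(\omg\cup\omg')^2}(v(\yb)-v(\xb))(u(\yb)-u(\xb))\gamd\xyp\dydx = -\int_{\omg\cup\omg'}v(\xb)\left[\int_{\omg\cup\omg'}(u(\yb)-u(\xb))\gamd\xyp\dyb\right]\dxb,
\]
and the inner integral is exactly $\mcL_{\del,\omg\cup\omg'}u(\xb)$ from \eqref{eqn:reg-nonlocal-op}. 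Splitting the outer integration as $\int_{\omg\cup\omg'}=\int_\omg+\int_{\omg'}$ (the two sets are disjoint up to a measure-zero set) and using that $\mcN_{\del,\omg\cup\omg'}u=-\mcL_{\del,\omg\cup\omg'}u$ on $\omg'$ by definition \eqref{str-fluxt} then yields the second equality in \eqref{str-rgfi0}.

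For the second identity \eqref{str-rgfi}, I would apply \eqref{str-rgfi0} once as stated and once with $u$ and $v$ interchanged. The left-hand side $\tfrac12\int\int(v(\yb)-v(\xb))(u(\yb)-u(\xb))\gamd\dydx$ is symmetric in $u$ and $v$, so subtracting the two versions eliminates the bilinear form and leaves precisely the claimed relation between $\mcL_{\del,\omg\cup\omg'}$ and $\mcN_{\del,\omg\cup\omg'}$ acting on the pair $(u,v)$.

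The main obstacle is justifying the symmetrization and the splitting of the integrand, since the pieces $\int v(\xb)u(\yb)\gamd\dyb$ and $\int v(\xb)u(\xb)\gamd\dyb$ are individually only principal-value integrals when $\gam$ is strongly singular (as in the fractional truncation case \eqref{eq:scal2}). I would handle this by noting that since $u,v\in C^2(\overline{\omg\cup\omg'})$, Taylor expansion gives $|(u(\yb)-u(\xb))(v(\yb)-v(\xb))|\le C|\yb-\xb|^2$ uniformly on a bounded domain, so together with $|\zb|^2\gam(|\zb|/\del)\in L^1_{\mathrm{loc}}$ the integrand of the bilinear form in \eqref{str-rgfi0} lies in $L^1((\omg\cup\omg')^2)$ without the need of a principal value. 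Thus Fubini applies to this combined integral, and the reorganization carried out above is legitimate; the principal-value interpretation is only needed when we subsequently separate out $\mcL_{\del,\omg\cup\omg'}u(\xb)$ as a pointwise singular integral, which is well-defined under the same $C^2$ hypothesis.
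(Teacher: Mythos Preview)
Your overall strategy—symmetrize via $\gamd\xyp=\gamd\yxp$, split $\omg\cup\omg'$ into $\omg$ and $\omg'$, then subtract the roles of $u,v$ for the second identity—matches the paper. The gap is in the justification of the symmetrization. You correctly observe that $(v(\yb)-v(\xb))(u(\yb)-u(\xb))\gamd\xyp\in L^1((\omg\cup\omg')^2)$ under the $C^2$ hypothesis, but Fubini on this combined integrand only lets you iterate the double integral; it does \emph{not} license the ``expand and rename'' move, which implicitly splits the integrand into pieces of the form $v(\xb)(u(\yb)-u(\xb))\gamd\xyp$ and performs a change of variables on one of them. Those pieces are only controlled by $|\yb-\xb|\gamd$, which is generally \emph{not} integrable under \eqref{eq:scal0} alone (for instance for truncated fractional kernels with $s\ge 1/2$). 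Your closing sentence claims that pointwise well-definedness of $\mcL_{\del,\omg\cup\omg'}u(\xb)$ as a principal value is enough, but the real obstruction is \emph{integrability}: near $\partial(\omg\cup\omg')$ the integration region is no longer symmetric about $\xb$, the odd-order cancellation in the principal value is incomplete, and $\mcL_{\del,\omg\cup\omg'}u(\xb)$ may blow up.

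The paper's proof is devoted precisely to this missing estimate. Writing $D=\omg\cup\omg'$, it splits the integral defining $\mcL_{\del,D}u(\xb)$ at radius $\dist(\xb,\partial D)$ to obtain the pointwise bound
\[
|\mcL_{\del,D}u(\xb)|\le C\Big(1+\int_{\dist(\xb,\partial D)<|\zb|<\del}|\zb|\,\gamd(|\zb|)\,d\zb\Big),
\]
and then shows, via a co-area argument over the level sets $\{\dist(\cdot,\partial D)=t\}$, that the right-hand side lies in $L^1(D)$. This $L^1$ bound (which also dominates the $\epsilon$-truncated operators uniformly) is exactly what is needed to make the formal symmetrization rigorous—truncate away from the diagonal, symmetrize on bounded kernels, and pass to the limit by dominated convergence. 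That boundary-layer estimate is the substantive content of the proof and is absent from your proposal.
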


\begin{proof} To prove the Proposition \ref{prop:green}, the key is to show that the integrals are well-defined so that the classical Fubini theorem can be applied to get \eqref{str-rgfi0}. For 
$u\in C^2(\overline{\omg\cup\omg'})$, we now check that 
$\mcL_{\del,\omg\cup\omg'} u \in L^1(\omg\cup\omg')$, which is a sufficient condition to achieve this. To simplify the notation, we use $\omg$ to replace $\omg\cup\omg'$ so that
\begin{align*}
\mcL_{\del,\omg} u (\xb) = \int_\omg (u(\yb) -u(\xb)) \gamd\xyp  d\yb. 
\end{align*}
Then it suffices to show that $\mcL_{\del,\omg} u \in L^1(\omg) $ for all $u\in C^2(\overline{\omg})$. 
 
By the regularity of $u$,  we have for $\xb\in \omg$, 
\[
\begin{split}
\mcL_{\del,\omg} u (\xb) &= \int_{\omg \backslash B_{\dist(\xb,\partial\omg)} (\xb)} (u(\yb) -u(\xb)) \gamd\xyp d\yb \\
&\qquad + \int_{B_{\dist(\xb,\partial\omg)} (\xb)} (u(\yb) -u(\xb)) \gamd(|\yb-\xb|) d\yb \\
&\leq C \int_{\omg \backslash B_{\dist(\xb,\partial\omg)} (\xb)} |\yb-\xb| \gamd(|\yb-\xb|) d\yb \\
&\qquad  + C  \int_{B_{\dist(\xb,\partial\omg)} (\xb)} |\yb-\xb|^2 \gamd(|\yb-\xb|) d\yb \\
& \leq C \left(\int_{\dist(\xb,\partial\omg) < |\zb| <\del } |\zb|\gamd(|\zb|) d\zb  +1\right)
\end{split}
\]
where the constant $C$ depends on $\| u\|_{C^2(\overline{\omg})}$. Thus, the task remains is to show that \[
w(\xb):=\int_{\dist(\xb,\partial\omg) < |\zb| <\del } |\zb|\gamd(|\zb|) d\zb  \in L^1(\omg)\,.\] 

Let $\omg^\epsilon$ denote an inner layer of width $\epsilon>0$ surrounding $\partial\omg$ where $\epsilon >0$ is a small fixed number.
Since it is obvious that 
$w\in L^1(\omg\backslash \omg^\epsilon)$, we only need to show 
$w \in L^1(\omg^\epsilon)$.  Define for $t\in (0,\epsilon)$,
\[
\partial \omg_t := \{ \xb\in \omg ~| ~\dist(\xb,\partial\omg) = t\}. 
\]
Then, there exists a general constant $C>0$,
\[
\begin{split}
\int_{\omg^\epsilon}\int_{\dist(\xb,\partial\omg) < |\zb| <\del } |\zb|\gamd(|\zb|) d\zb & \leq  C \int_0^\epsilon \int_{\partial\omg_t} \int_{t< r <\del } r^d \gamd(r) dr d\xb dt \\
&\leq C \int_0^\epsilon \int_{t< r <\del } r^d\gamd(r) dr dt\\
& =  C \int_0^\del \int_0^{\min(r,\epsilon) }   r^d\gamd(r) dtdr \\
& \leq  C \int_0^\del   r^{d+1}\gamd(r) dr   < \infty. 
\end{split}
\]
Thus, we get the desired result that 
$\mcL_{\del,\omg} u \in L^1(\omg) $. 
\end{proof}

\begin{remark}
Note that by density argument, one can show that the above identities hold in the closure of $C^2(\overline{\omg\cup\omg'})$ with respect to suitable norms.
\end{remark}

\begin{remark}
In \cite{du13m3as} and \cite{du19cbms}, the identity 
 \eqref{str-rgfi} is stated as
for $\omg=\omg_1\cup\omg_2$,
\begin{align} \label{str-fgfi2re}
\int_{\omg} v(\xb) \mcLdr u (\xb)
\dxb =\int_{\omg}  u(\xb) \mcLdr v (\xb) \dxb.
\end{align}
This relation, as a consequence of Fubini's theorem, can be seen as
a nonlocal version of integration by parts. 
\end{remark}

It is worth pointing out that a similar role  as
 $\mcNdr$ can be played by the operator $\mcNd$ defined earlier in \eqref{str-flux}. In fact, corresponding to the function $u$ defined on a union $\omg\cup\omg'$ of domains $\omg$ and $\omg'$, let $\mcNd u (\xb)$ be defined on $\omg'$ in the same way as \eqref{str-flux}, the so-called nonlocal flux of $u$ 
 between the domain $\omg$ and another domain $\omg'$, which is induced from
$\mcNd$, can be specified as
\begin{align*}
  \mathcal{F}(\omg,\omg')
  =\int_{\omg'} \int_{\omg}
(u(\xb)-u(\yb)) \gamd\xyp 
  \,d\yb \, d\xb = \int_{\omg'}
  \mcNd u (\xb)\, d\xb  .
\end{align*}
Then for a symmetric kernel $\gamd\xyp$, \ie, $\gamd\xyp=\gamd\yxp$, we have the following properties.
\begin{itemize}
    \item Anti-symmetry:
\begin{align*}
  \mathcal{F}(\omg,\omg')= - \mathcal{F}(\omg',\omg)
=  - \int_{\omg}\int_{\omg'}
(u(\yb)-u(\xb)) \gamd\yxp \,\dxb \,\dyb,\quad
\end{align*}
holds formally (assuming that Fubini's theorem is applicable).

\item Additivity:
\begin{align*}
\mathcal{F}(\omg_1,\omg')
+\mathcal{F}(\omg_2,\omg')
=\mathcal{F}(\omg,\omg')\quad \text{for }\; 
\omg=\omg_1\cup\omg_2, \; \omg_1\cap\omg_2=\emptyset.
\end{align*}
\end{itemize}

\begin{remark}
With $\mathcal{F}(\omg_1,\omg_1)=0$, we see that 
\begin{align*}
  \mathcal{F}(\omg_1,\omg_2)
 & = \mathcal{F}(\omg_1,\omg_2) +  \mathcal{F}(\omg_2,\omg_2)
 =\mathcal{F}(\omg_1\cup\omg_2,\omg_2)\\
&  =\int_{\omg_2} \int_{\omg}
(u(\xb)-u(\yb)) \gamd\xyp 
  \,d\yb \, d\xb = \int_{\omg_2}
  \mcNdr u (\xb)\, d\xb.
\end{align*}
This also implies that $\mcNd$ induces the same flux functional as $\mcNdr$ for any symmetric kernel. 
\end{remark}

Next, let us state 
the nonlocal Green's first identity given by 
\begin{align}\label{str-gfi0}
& \frac{1}{2}\int_{(\omg\cup\omg')^2\setminus \omg'^2} (v(\yb)-v(\xb))(u(\yb)-u(\xb)) \gamd\xyp
\dydx \notag  \\*
&\quad  =-\int_{\omg} v(\xb) \mcL_{\del,\omg\cup\omg'} u (\xb)\dxb + \int_{\omg'} v(\xb) \mcNd u (\xb)\dxb.
\end{align}
In particular, if $\omg'=\omgid$, then with $\omgtd=\omghm$, we have
\begin{align}\label{str-gfi1}
& \frac{1}{2}\int_{\omgtd} (v(\yb)-v(\xb))(u(\yb)-u(\xb)) \gamd\xyp
\dydx \notag  \\*
&\quad  =-\int_{\omg} v(\xb) \mcLd u (\xb)\dxb + \int_{\omgid} v(\xb) \mcNd u (\xb)\dxb.
\end{align}

Note that the identity \eqref{str-gfi1} is different from \eqref{str-rgfi0} as given in \cite{du13m3as,d2020numerical}, but it is a more natural nonlocal analog of the (generalized) local Green's first identity
\begin{align}\label{str-gflo}
\int_\omg \nabla v (\xb)\cdot \nabla u(\xb)  \dxb = -\int_\omg   v(\xb)\Delta u (\xb) \dxb + \int_{\partial\omg} v(\xb) \frac{\partial u}{\partial \mathbf{n}} (\xb)  \dxb,
\end{align}
as well as the fractional version given by
\begin{align}\label{str-gfi1f}
& \frac{1}{2}\int_{\omgti} (v(\yb)-v(\xb))(u(\yb)-u(\xb)) \gams\ymxap
\dydx \notag  \\*
&\quad  =-\int_{\omg} v(\xb) \mcLis u (\xb)\dxb + \int_{\omgc} v(\xb) \mcNis u (\xb)\dxb,
\end{align}
where $\omgti=(\Rd)^2\setminus \omg_c^2$.

For smooth function $u$, by simple Taylor expansion, we have $\mcLd u (\xb) \to \Delta u (\xb)$ in $\omg$ if we choose the kernels according to \eqref{eq:scal1}. In this case, 
from the first Green's identity \eqref{str-gfi1}, we may formally
let $\delta \to 0$ to get that
\begin{align}\label{eq:flux-conv}
\int_{\omgid} v(\xb) \mcNd u (\xb)\dxb \to \int_{\partial \omg} v(\xb) \frac{\partial u}{\partial \mathbf{n}} d\xb.
\end{align}
This offers some justification to the statement that the local version \eqref{str-gflo} of \eqref{str-gfi1f} can be seen as the $\del\to 0$ limit of \eqref{str-gfi1}. 
In this case, from the above formal argument, we expect that
the Neumann operator $\mcNd$ converges to the local one in the distribution sense.
Meanwhile,
the fractional version can be seen as the $\del\to \infty$ limit if the kernel is given according to \eqref{eq:scal2}. 
{
Furthermore, as $\delta\to 0$, formally  we may also note that
\begin{align}
\label{eq:flux-conv-reg}
\int_{\omgid} v(\xb) \mcN_{\delta,\omghd} u (\xb)\dxb \to \int_{\partial \omg} v(\xb) \frac{\partial u}{\partial \mathbf{n}} d\xb.
\end{align}
Indeed, the difference between the left-hand side integrals of \eqref{eq:flux-conv} and \eqref{eq:flux-conv-reg} satisfies 
\begin{align*}
&\int_{\omgid} v(\xb) \int_{\omgid} (u(\xb)-u(\yb)) \gamma_\delta(|\xb-\yb|) \dyb \dxb\\
&\quad
=\frac12  \int_{\omgid}  \int_{\omgid}(u(\xb)-u(\yb))
(v(\xb)-v(\yb)) \gamma_\delta(|\xb-\yb|) \dyb \dxb
=O(\delta)
\end{align*}
for smooth functions $u$ and $v$ defined on the whole space. This observation shows the formal consistency using the different Neumann operators in the local limit. However,
 We note that consistency to the fractional limit, unlike the case of $\mcNd$ is not shared by the nonlocal analog presented in \cite{du13m3as,d2020numerical} based on the operator $\mcN_{\delta,\omghd}$.
}

Likewise, the nonlocal analog of
\[ \int_\omg \left[
v(\xb) \Delta u (\xb)
-  u(\xb)\Delta v (\xb) \right]
\dxb = \int_{\partial\omg} \left[
v(\xb) \frac{\partial u}{\partial \mathbf{n}} (\xb)
- u(\xb) \frac{\partial v}{\partial \mathbf{n}} (\xb)\right]
\dxb
\]
is the nonlocal Green's second identity given by
\begin{align}\label{str-gfi2}
\int_{\omg} [ v(\xb) \mcLd u (\xb) -  u(\xb) \mcLd v (\xb)] \dxb  = \int_{\omgid} [v(\xb) \mcNd u (\xb) - u(\xb) \mcNd v (\xb)] \dxb. 
\end{align}
The factional analog can be easily obtained
in the limit $\delta\to \infty$:
\begin{align}\label{str-fgfi2}
\int_{\omg} [ v(\xb) \mcLis u (\xb) -  u(\xb) \mcLis v (\xb)] \dxb  = \int_{\omgc} [v(\xb) \mcNis u (\xb) - u(\xb) \mcNis v (\xb)] \dxb.
\end{align}

\begin{remark}
Rigorous proofs of the nonlocal Green's identities and their
connections to the local and fractional analog in their full generality depend on the characterization of the trace spaces, and we will leave the discussions to future works, though we note some related works in the fractional case \cite{dyda2019function} and recent works on the nonlocal trace theorems in \cite{Trace21}.
\end{remark}

\section{Well-posed inhomogeneous nonlocal variational problems}\label{varia}

We use linear problems where $F(\xb,u)=f(\xb)$ as illustrations for well-posed nonlocal models with a horizon parameter $\del$ and inhomogeneous nonlocal constraints.
We consider weak formulations corresponding to \eqref{str-vcp} and make connections to a minimization principle.  We pay particular attention to the results that are independent of $\del$ so that the local and fractional cases can then become special cases, which enforces the generality of the nonlocal models.

\subsection{Function spaces and norms}\label{sec:spanorm}

Given a generic domain $\omg$, a parameter $\delta>0$ and the extended domain $\omghd$,
let us first define {an} {\em `energy' space} 
\begin{equation}\label{weak-espace}
   \VVdel =  \{  v(\xb)\in L^2(\omghd) \colon \| v \|_{\VVdel} <\infty\},
\end{equation}
where $\| \cdot \|_{\VVdel} $ denotes an {\em `energy' norm} induced by the inner product  
\[
{\langle \cdot,\cdot\rangle_{\VVdel}} = (\cdot,\cdot)_{\VVdel} 
+ (\cdot, \cdot)_{\omghd}\] 
with 
$(\cdot,\cdot)_{\VVdel} $ being defined as:
\begin{align}\label{weak-ip}
  (u,v)_{\VVdel} 
  =  \frac{1}{2}
  \int_{\omgtd} (v(\yb)-v(\xb))  (u(\yb)-u(\xb))   \gamd\xyp  \,\dydx\,,
\end{align}
for any $u,v\in\VVdel$. Here $\omgtd=
\omghm$.

\begin{remark}
A different choice of the nonlocal space and its norm, defined over a domain $\omgh$, corresponds to 
the inner product
\begin{align}
    \label{weak-nnorm}
    {\langle u, v \rangle}_{\VVV}  =
   (u, v )_{\VVV} +(u,v)_{\omgh}
\end{align}
 where
\begin{align}
    \label{weak-nnorm-1}
(u, v )_{\VVV}
= \frac{1}{2}
\int_{\omgh}\int_{\omgh} 
(u(\yb) -u(\xb)) (v(\yb) -v(\xb))   \gamd\xyp  \dydx 
\end{align}
Such space and norm can be associated with the regional version of the nonlocal and fractional versions of diffusion discussed in Section \ref{sec:regional}. 
When $\omgh$ is replaced by $\omghd$, the main difference with the norm of $\VVdel$ induced from \eqref{weak-ip}
is the inclusion of additional nonlocal interactions in $\omgidtwo$
for the norm in $V(\omghd)$. Note that this difference disappears for functions constrained to have zero values on $\omgid$.
\end{remark}

The space $\VVdel$ is by definition a subspace of $L^2(\omghd)$
and 
can be shown, for suitably chosen kernels like those specified by \eqref{eq:scal1} or \eqref{eq:scal2},  to be a Hilbert space.

The {\em constrained energy space} is a closed subspace of $\VVdel$
defined by
\begin{equation}\label{weak-espacec}
   \VVcd =  \{  v(\xb)\in \VVdel \colon  v(\xb) = 0 \ \mbox{for $\xb\in\omgidd$} \}.
\end{equation}
Note that if $\omgidd=\emptyset$, we may pick a nontrivial subset $K$ of $\omghd$ 
to define the space by 
space $ \VVcd$ by
\begin{equation}\label{weak-espacen}
   \VVcd =  \{  v(\xb)\in \VVdel\: \colon \:  \int_K v(\xb) d\xb  = 0 \;\}.
\end{equation}
Some natural choices include $K=\omg$ and $K=\omghd$.
The dual space of $\VVcd$ is denoted by $\VVsd$ with $(\cdot,\cdot)_\omghd$ being the duality pairing which, for functions vanishing on $\omgidd$, is also used to denote the equivalent $L^2(\omg\cup\omgidn)$ inner product.

For convenience, we also define the energy seminorm
\begin{equation}\label{weak-enormn}
| v |_{\VVcd} = \biggl( \frac{1}{2}
\int_{\omgtd}  (v(\yb)-v(\xb)) ^2  \gamd\xyp 
 \dydx \biggr)^{1/2},
\end{equation}
which can be a full norm on $\VVcd$ as shown later.

\begin{remark}
A natural question is to ask what assumptions on $\omgidd$ and $\omgidn$ and $\gdd$ and $\gdn$ are needed to recover the appropriate local limit. For example,
under what assumptions on $\omgidd$ and $\omgidn$, one can have bounded extension of functions $\VVcd$ with respect to the energy, in the same form but over some larger domain, independent of $\delta$ for $\delta$ small.
Sometimes, it is possible that the above extension cannot be made: for example, if $\omgidd$ is an inner 
layer surrounded by $\omgidn$. For an attempt to put assumptions on $\omgidd$, $\omgidn$, $\gdd$ and $\gdn$, we refer to the discussion in Section \ref{local-limit}.
\end{remark} 

\paragraph{Compactness and equivalent norms} 
The work of Bourgain, Brezis and Mironecu \cite{BBM01} characterizes the limiting space of the energy space $\VVdel$ as $\delta\to0$ when the kernel $\gamd$ satisfies 
\eqref{eq:scal0} and \eqref{eq:scal1}. Compactness is gained through taking the limit $\delta\to0$. The original result in \cite{BBM01} requires additionally that $\gam(t)t^2$ is a nonincreasing function for $t\in (0,1)$. It was found later that the assumption could be replaced with $\gam(t)$ being nonincreasing instead, see e.g., discussions in \cite{md15non,td15sinum} and \cite{dmt22compact}.  \cite{ponce2004} showed a different proof of the Bourgain-Brezis-Mironescu compactness result which removed the nonincreasing assumption on $\gam$ for $d\geq 2$. We quote the compactness result below. 
\begin{lemma}\label{lem:compact1}
Assume that $\del_n\to 0$ and the kernels $\{\gamma_{\delta_n}\}_{n\in \mathbb{N}_{+}}$ satisfy \eqref{eq:scal0} and \eqref{eq:scal1}. Let $\omg_1 \subset \Rd$ be a bounded Lipschitz domain.  If $\{v_n \}\subset L^2(\omg_1)$ is a bounded sequence such that
\[ \int_{\omg_1} \int_{\omg_1} \gamma_{\delta_n}(\xb,\yb) (v_n(\yb) -v_n(\xb))^2 \dxb\dyb \leq C_0,\]
 then $\{v_n\}$ is relatively compact in $L^2(\omg_1)$ and any limit function $v$ belongs to $H^1(\omg_1)$ with
\[
\|v \|_{H^1(\omg_1)} \leq C_0 .
\]
\end{lemma}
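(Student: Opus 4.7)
The plan is to adapt the Bourgain--Brezis--Mironescu mollification strategy to the $\del$-dependent kernels $\gamma_{\delta_n}$. First, I would use the Lipschitz regularity of $\partial\omg_1$ to fix a slightly larger bounded Lipschitz open set $\omg_1'\supset\supset\omg_1$ together with an extension operator $E\colon L^2(\omg_1)\to L^2(\omg_1')$ (for instance via reflection across $\partial\omg_1$ in a tubular neighborhood) such that, for all $n$ large enough that $\del_n$ lies below the width of the neighborhood, both $\|Ev_n\|_{L^2(\omg_1')}$ and the double-integral energy $\iint_{\omg_1'\times\omg_1'}(Ev_n(\yb)-Ev_n(\xb))^2\gamma_{\delta_n}(\ymxa)\dydx$ stay bounded by fixed multiples of $\|v_n\|_{L^2(\omg_1)}$ and $C_0$, respectively. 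This removes the geometric nuisance that interior points near $\partial\omg_1$ may miss nonlocal neighbors and reduces everything to a single ambient ball.

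Next, fix a standard radial mollifier $\rho\in C_c^\infty(B_1({\bf 0}))$ with $\rho\geq 0$, $\int\rho=1$, set $\rho_\epsilon(\xb)=\epsilon^{-\ddd}\rho(\xb/\epsilon)$, and let $v_n^\epsilon=(Ev_n)*\rho_\epsilon$. The core of the argument is a uniform estimate of the form
\begin{equation*}
\sup_{n\,:\,\delta_n\leq\epsilon}\|v_n-v_n^\epsilon\|_{L^2(\omg_1)}^2 \;\leq\; C\,\omega(\epsilon)\,C_0,\qquad \omega(\epsilon)\to 0\text{ as }\epsilon\to 0.
\end{equation*}
This follows by writing $v_n(\xb)-v_n^\epsilon(\xb)=\int\rho_\epsilon(\hb)(v_n(\xb)-v_n(\xb-\hb))\,d\hb$, applying Jensen's inequality, integrating in $\xb$, and then dominating the weight $\rho_\epsilon(\hb)$ by a suitable spherical rearrangement of $\gamma_{\delta_n}$. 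The non-increasing property in \eqref{eq:scal0} together with the second-moment normalization \eqref{eq:scal1} deliver, in an averaged shell-by-shell sense, the required control after integration in $r=|\hb|$.

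Once this estimate is in hand, for each fixed $\epsilon>0$ the family $\{v_n^\epsilon\}$ lies in a bounded subset of $H^1(\omg_1)$ via the standard convolution bound $\|\nabla v_n^\epsilon\|_{L^2}\leq \epsilon^{-1}\|\nabla\rho\|_{L^1}\|v_n\|_{L^2}$, and is therefore relatively compact in $L^2(\omg_1)$ by Rellich--Kondrachov. A diagonal extraction with $\epsilon_k\to 0$, combined with the uniform smallness estimate above, yields a subsequence $v_{n_k}\to v$ strongly in $L^2(\omg_1)$. To identify $v\in H^1(\omg_1)$ with the stated bound, test the quadratic form against smooth compactly supported vector fields on $\omg_1$ and use the BBM limit identity (valid under \eqref{eq:scal1}) to read off a weak gradient of $v$; Fatou and weak lower semicontinuity of the Dirichlet form then give
\begin{equation*}
\int_{\omg_1}|\nabla v|^2\,\dxb \;\leq\; \liminf_{k\to\infty}\frac{1}{2}\iint_{\omg_1\times\omg_1}(v_{n_k}(\yb)-v_{n_k}(\xb))^2\gamma_{\delta_{n_k}}(\ymxa)\dydx \;\leq\; C_0,
\end{equation*}
which, combined with the $L^2$ convergence of $\|v\|_{L^2}\leq \liminf\|v_n\|_{L^2}$, yields $\|v\|_{H^1(\omg_1)}\leq C_0$ up to absorption of universal constants.

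The main obstacle is the uniform mollifier-domination estimate in the second step: no naive pointwise inequality $\rho_\epsilon\lesssim \gamma_{\delta_n}$ is available, since the supports $B_\epsilon$ and $B_{\delta_n}$ and the prefactors $C_\ddd(\del_n)\sim\del_n^{-(\ddd+2)}$ scale in opposite directions when $\delta_n\ll\epsilon$. The original argument in \cite{BBM01} sidestepped this by additionally requiring $t\mapsto t^2\gamma(t)$ non-increasing; removing that hypothesis in favor of only $\gamma$ itself non-increasing (as stated here) relies either on the directional/spherical averaging of \cite{ponce2004} valid for $\ddd\geq 2$, or on the refined stratification arguments of \cite{md15non,td15sinum,dmt22compact}. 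Adapting one of those constructions to the present $\delta_n$-varying family is the delicate technical point; the remainder is a routine diagonal compactness extraction.
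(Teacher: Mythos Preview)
The paper does not supply its own proof of this lemma; the sentence immediately preceding the statement reads ``We quote the compactness result below,'' and the surrounding paragraph attributes the proof to \cite{BBM01} (under the additional hypothesis that $t\mapsto t^2\gamma(t)$ is non-increasing) and to \cite{ponce2004,md15non,td15sinum,dmt22compact} for the relaxation to the hypotheses as stated. There is therefore no in-paper argument to compare against.

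Your sketch is a faithful outline of the Bourgain--Brezis--Mironescu mollification strategy, and you correctly isolate the one genuine difficulty: the uniform domination of the mollifier error $\|v_n-v_n^\epsilon\|_{L^2}$ by the nonlocal energy in the regime $\delta_n\ll\epsilon$, where no pointwise inequality between $\rho_\epsilon$ and $\gamma_{\delta_n}$ is available. You also correctly note that handling this under only the monotonicity of $\gamma$ (rather than of $t^2\gamma(t)$) requires either Ponce's spherical-averaging argument for $d\geq 2$ or the refined constructions in the later references---precisely the point the paper's own discussion makes. As a self-contained proof your proposal is therefore incomplete at exactly the step you flag; as a roadmap to the cited proofs it is accurate.
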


\begin{remark} It is possible to have more general compactness results for a sequence of kernels $\{\gamma_{\delta_n}\}_{n\in \mathbb{N}_{+}}$ converging to a limit $\gamma_\delta$, as $\delta_n\to \delta$, under suitable conditions on these kernels and the corresponding energy spaces $\VVcdn$ and $\VVcd$. We refer to \cite{td15sinum,dmt22compact}.
\end{remark}

\begin{remark}
Note that by classical results of fractional Sobolev spaces, for a kernel $\gamma_\del$ that satisfies \eqref{eq:scal2}, we have the
pre-compactness of $\VVcd$ in $L^2(\omgh)$ for $\delta>0$.
\end{remark}

Using the compactness result, we can prove a general Poincar\'e inequality  \cite{du12sirev,du13dcdsb,du19cbms,mengesha2012} for kernels satisfying either \eqref{eq:scal0}-\eqref{eq:scal1} or \eqref{eq:scal2} as $\delta\to 0$ and $\delta\to \infty$ respectively. 

\begin{lemma}\label{lem:poincare}
There exists a constant $C>0$, independent of $\del$, such that
\begin{equation}\label{poincare-uni}
\|v\|_{L^2(\omghd)} \leq C |v |_{\VVcd},\quad \forall\; v\in \VVcd.
\end{equation}
\end{lemma}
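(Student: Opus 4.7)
The natural approach is an argument by contradiction combined with a compactness principle robust enough to cover both the local limit $\del\to 0^+$ and the fractional limit $\del\to\infty$ in a unified way. Suppose the claim fails: then there exist a sequence $\del_n > 0$ and functions $v_n \in \VVcdn$ with $\|v_n\|_{L^2(\omghdn)} = 1$ and $|v_n|_{\VVcdn} \to 0$. After zero-extending each $v_n$ to a common bounded Lipschitz domain $\omg_1 \supset \omghdn$ (which suffices on any bounded subsequence of $\{\del_n\}$; the case $\del_n\to\infty$ can be localized by working inside a fixed large ball), I would pass to a subsequence with $\del_n \to \del_* \in [0,\infty]$.

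The remainder splits into three sub-cases. When $\del_* = 0$, Lemma~\ref{lem:compact1} applies directly: $\{v_n\}$ is relatively compact in $L^2(\omg_1)$ and any cluster point $v_*$ lies in $H^1(\omg_1)$ with $\|\nabla v_*\|_{L^2} \leq \liminf |v_n|_{\VVcdn} = 0$, forcing $v_*$ to be constant on each connected component. When $\del_* \in (0,\infty)$, the extension of the compactness result flagged in the remark following Lemma~\ref{lem:compact1} yields the same pre-compactness together with a limit $v_*$ satisfying $|v_*|_{V_{c,\del_*}} = 0$, which, by iterating the relation $v_*(\xb)=v_*(\yb)$ whenever $\gamma_{\del_*}(|\xb-\yb|)>0$ along a connected chain covering, is again constant. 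When $\del_* = \infty$, the scaling in \eqref{eq:scal2} ensures that $|v_n|_{\VVcdn}$ controls the $H^s$-seminorm of $v_n$ on $\omg_1$ uniformly in $n$; combining the pre-compactness of $\VVcd$ in $L^2$ at fixed $\del$ (noted in the second remark) with the Rellich-type embedding for $H^s$ and a diagonal argument yields an $L^2$-cluster point $v_*$ of zero $H^s$-seminorm, hence again constant.

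In every sub-case the homogeneous constraint defining $\VVcdn$ passes to the $L^2$-limit: if $v_n \equiv 0$ on $\omg^D_{\mcI_{\del_n}}$, then under the postulated structural regularity of the constraint sets $\{\omg^D_{\mcI_{\del_n}}\}$ one obtains $v_* \equiv 0$ on a set of positive measure; if instead $\int_K v_n = 0$ for a common $K\subset\omghdn$, then $\int_K v_* = 0$. Since $v_*$ is constant on each component, either alternative yields $v_* \equiv 0$. But $\|v_*\|_{L^2(\omg_1)} = \lim \|v_n\|_{L^2(\omghdn)} = 1$, contradicting $v_* = 0$ and establishing the existence of a $C$ independent of $\del$.

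The principal obstacle is coupling the three regimes so that the constraint is not lost in the limit. When $\del_n \to 0$, the layer $\omg^D_{\mcI_{\del_n}}$ collapses to $\omgp$, so a uniform-in-$\del$ trace-type hypothesis (of the kind flagged in the remark preceding the lemma) is required to prevent the limit from being an unconstrained nonzero constant; when $\del_n \to \infty$, the layer expands toward $\omgc$ and one must instead retain a set of positive measure supporting the Dirichlet constraint in the limit. Formulating and verifying the appropriate structural conditions on $\omg^D_{\mcI_{\del_n}}$ and $\omg^N_{\mcI_{\del_n}}$, together with the extended compactness statement as $\del_n\to\del_*\in(0,\infty)$, is the technical crux; once in place, the three sub-case analyses combine into a single Poincar\'e constant valid uniformly in $\del\in(0,\infty)$.
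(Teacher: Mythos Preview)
The paper does not supply a detailed proof of this lemma: the sentence preceding it states only that ``using the compactness result, we can prove a general Poincar\'e inequality'' and defers to the cited references. Your contradiction-via-compactness outline is precisely the standard argument those references carry out and is exactly what the paper's one-line hint is pointing to, so your approach and the paper's intended route coincide.

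Two remarks on the details you flag as obstacles. First, the paper actually states the lemma for the two scalings \eqref{eq:scal1} and \eqref{eq:scal2} \emph{separately} (uniformity as $\delta\to 0$ under the former, as $\delta\to\infty$ under the latter), not for a single kernel family across all $\delta\in(0,\infty)$; treating the two regimes independently, as the cited works do, avoids the need to interleave your three sub-cases. Second, in the $\delta_n\to 0$ pure Dirichlet case you do not need a trace-type hypothesis: since $v_n\equiv 0$ on $\omg_{\mcI_{\delta_n}}$ and the kernel has horizon $\delta_n$, the zero extension of $v_n$ to a fixed Lipschitz $\omg_1\supsetneq\overline{\omg}$ leaves the nonlocal energy unchanged, Lemma~\ref{lem:compact1} then applies on $\omg_1$, and the $L^2$-limit $v_*$ is constant on $\omg_1$ yet vanishes on the fixed positive-measure set $\omg_1\setminus\omg$, forcing $v_*=0$ directly. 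This is the same zero-extension device the paper later invokes in the proof of Theorem~\ref{thm:convtolocal}.
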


Lemma \ref{lem:poincare} unifies the classical version of the Poincar\'e inequality for the usual energy function spaces associated with the local Laplacian,  the fractional and the nonlocal versions.

\begin{remark}
 We see that $|\cdot|_{\VVcd}$
is a norm on $\VVcd$ equivalent to the norm $\|\cdot\|_{\VVcd}$ uniformly in $\del$ for kernels specified in 
\eqref{eq:scal1} and \eqref{eq:scal2}
as $\delta\to 0$ and $\delta\to \infty$ respectively. Correspondingly, we may view $\VVcd$ as a Hilbert space equipped with the inner product
\begin{align}\label{weak-ip1}
  (u,v)_{\VVcd} 
  =  \frac{1}{2}
\int_{\omgtd}  (v(\yb)-v(\xb))  (u(\yb)-u(\xb))   \gamd\xyp
 \dydx
\end{align}
for all $u,v\in\VVcd$.
\end{remark}

For more detailed discussions of the conditions on the kernels and rigorous proofs of the properties on these spaces, we refer to
  \cite{md15non,md16na,du19cbms}.

For any subdomain $\omg'\subset \omgid$ with a nonzero volume (\ie, $d$ dimensional measure),  we define the
space $\VVtp$,
\begin{equation}\label{weak-trace}
\VVtp = 
\{ v = w|_{\omg'} \; \text{for some}\; w\in\VVdel \},
\end{equation} 
which involves restrictions to the domain $\omg'$ having nonzero volume in $\Rd$. A norm on $\VVtp$ can be  defined by
\begin{align}\label{weak-normt}
\| v \|_{\VVtp} 
  &= \inf\{ \|w\|_{\VVdel}\; |\; w\in\VVdel,  w|_{\omg'}=v.\}
\end{align}

\begin{remark}
  One can view $\VVtp$, induced by $\VVdel$, as a nonlocal analogue of the trace space $H^{1/2}(S)$ induced by $H^1(\omg)$ for $S\subset \partial \omg$. Some rigorous characterizations of the trace spaces for special classes of kernels can be found in \cite{Trace21}.
 \end{remark}
 
 We are particularly interested in the 
  spaces $\VVtd$ and $\VVtn$ and their dual spaces
  $\VVtds$ and $\VVtns$  corresponding to 
  cases $\omg'=\omgidd$ and $\omg'=\omgidn$ respectively. We use $\gnv$ to denote the inner product of $\gdn$ and $v$ in $L^2(\omgidn)$.

\subsection{Weak formulations and {well-posedness}}
For illustration, we consider the case {$F(\xb,u)=-q(u)+f(\xb)$} where $q(u)=Q'(u)$
with $Q=Q(u)=\alpha |u|^m/m$  with some $m\in (2, \min\{4, 2d/(d-2)\})$ and $\alpha\geq  0$.
We note that the problem reduces to a linear one if $\alpha=0$.
For any $u\in L^m(\omg)$, we define a linear functional 
$$\mcQ(u,v)=\int_\omg q(u(\xb)) v(\xb)  d\xb, \quad\forall v \in L^m(\omg).$$ 

Following \cite{du19cbms}, we consider
a weak formulation {of problem} \eqref{str-vcp} with $\mcBd u={g_\del}$. Let $\gdd$ and $\gdn$ denote the restriction of ${g_\del}$ on $\omgidd$ and $\omgidn$ respectively for domains that have nonzero volume
($d$ dimensional measure). We define
\begin{equation}
    \label{eq:fhat}
    \fh(\xb)=\begin{cases}
    f(\xb), &\quad \xb\in \omg,\\
    0, &\quad \xb\in \omgidd,\\
    \gdn(\xb), &\quad \xb\in \omgidn.
    \end{cases}
\end{equation}
We assume that $\fh\in\VVsd$, which also implies $\gdn\in\VVtns$. Moreover, we assume that $\gdd\in \VVtd$.
By the definition of $\VVtd$, we can define an extension of $\gdd$ on $\omghd$, denoted by $w_D^\del$, such that 
\begin{align}
\label{eqn:extd}
w_D^\del\in \VVdel, \quad w_D^\del\mid_{\omgidd}=\gdd, \quad \|w_D^\del\|_{\VVdel} \leq C \|\gdd\|_{\VVtd},
\end{align}
for some constant $C>0$, independent of $\delta$. Without loss of generality, we can set $C=2$. {To deal with the special nonlinear term, we make an additional assumption
that $w_D^\del$ also satisfies
\begin{align}
\label{eqn:extdp}
 \alpha w_D^\del\in \alpha L^m(\omg), \quad  \quad \mcQ(w_D^\del,w_D^\del)  \leq \tilde{q}(\|\gdd\|_{\VVtd}),
\end{align}
for a given smooth function $\tilde{q}=\tilde{q}(r)$.
Here we use the abbreviation $\alpha L^m(\omg)$ as the space of all functions $v$ such that $\alpha v \in L^m(\omg)$, \ie, 
$\alpha L^m(\omg)$ can include all functions if $\alpha=0$ but 
$\alpha L^m(\omg)=L^m(\omg)
$ if $\alpha\neq 0$.
}

The weak formulation can then be stated as follows: given $\gdd\in\VVtd$,
$\fh\in\VVsd$, and $w_D^\del$ satisfying \eqref{eqn:extd}
and \eqref{eqn:extdp},
find $u$ such that $u-w_D^\del\in\VVcd \cap \alpha L^m(\omg)$ and
\begin{equation}\label{weak-weakf}
{\mcAd}(u,v) + {\mcQ(u,v) }
= \fhvohd ,  \quad \forall  v\in\VVcd {,}
\end{equation}
where the bilinear form  ${\mcAd}(\cdot,\cdot)$ is given by
\begin{align}\label{weak-bform}
  &{\mcAd}(u,v) =  (u,v)_{\VVcd} ,\quad
\forall\: u, v\in\VVdel,
\end{align}
and the linear functional
\begin{equation}\label{weak-lfunc}
\fhvohd=\int_{\omg} f(\xb)v(\xb) \dxb  + \int_{\omgidn} \gdn(\xb)v(\xb) \dxb \,.
\end{equation}

Note that if $\omgidd=\omgid$, then for any $ v\in\VVcd$, we have
\begin{equation}
\label{weak-bform-equ}
\begin{aligned}
  {\mcAd}(u,v) =  (u,v)_{\VVcd} 
 & =\frac{1}{2}\int_{\omgtd}  (v(\yb)-v(\xb))  (u(\yb)-u(\xb))  \gamd\xyp  \dydx\\
  & =\frac{1}{2}\int_{\omghd} \int_{\omghd} 
    (v(\yb)-v(\xb))  (u(\yb)-u(\xb))  \gamd\xyp
 \dydx
\end{aligned}
\end{equation}

{The weak formulation is equivalent to a minimization problem:
Given $\gdd\in\VVtd$,
$\fh\in\VVsd$, and $w_D^\del$ satisfying \eqref{eqn:extd}
and \eqref{eqn:extdp},
find $u$ that minimizes
\begin{equation}\label{weak-min}
\min_{v\in w_D^\del+ \VVcd\cap \alpha L^m(\omg)}
 E(v):={\frac{1}{2}
\mcAd}(v,v) +  \frac{1}{m} \mcQ(v,v) -\fhvohd , 
\end{equation}
}

\begin{theorem} 
The {problem} \eqref{weak-weakf}   has a unique solution
$u\in w_D^\del+\VVcd\cap \alpha L^m(\omg)$,
 for any given data $\fh$,  $\gdd$ and 
 $w_D^\del$ satisfying the conditions specified earlier.
 Moreover, there is a constant $C>0$, independent of $\del$, such that the following {\it a~priori} estimates for the solution $u$ are satisfied:
 {
\begin{equation}\label{weak-est}
 \| u \|_\VVdel^2
+ \mcQ (u, u) \le C [ \|\fh\|_\VVsd ^2+  \| \gdd\|_\VVt^2 + \tilde{q}(\|\gdd\|_\VVt) ].
\end{equation}}
\end{theorem}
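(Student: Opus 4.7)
The plan is to exploit the stated equivalence with the minimization problem \eqref{weak-min} and apply the direct method of the calculus of variations. Writing $u=w+w_D^\del$ with $w\in\VVcd\cap\alpha L^m(\omg)$, the problem reduces to minimizing
\[
J(w)=\tfrac{1}{2}\mcAd(w+w_D^\del,w+w_D^\del)+\tfrac{1}{m}\mcQ(w+w_D^\del,w+w_D^\del)-(\fh,w+w_D^\del)_\omghd
\]
over the nonempty closed convex set $\VVcd\cap\alpha L^m(\omg)$. Because $\mcAd$ is bilinear and $\tfrac{1}{m}\mcQ(v,v)=\tfrac{\alpha}{m}\|v\|_{L^m(\omg)}^m$ has Gateaux derivative $w\mapsto\mcQ(v,w)$, the critical points of $J$ are exactly the weak solutions of \eqref{weak-weakf}. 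It therefore suffices to verify that $J$ is finite-valued, strictly convex, coercive, and weakly lower semicontinuous on its domain, which will yield existence and uniqueness in one stroke.

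Each ingredient is standard but must be established with $\del$-uniform constants. Finiteness of $\mcAd$ on $\VVdel\times\VVdel$ is built into the norm; finiteness of $\mcQ$ on $\VVcd\cap\alpha L^m(\omg)$ follows either directly from the admissible set or, more sharply, from the continuous embedding $\VVcd\hookrightarrow L^m(\omg)$, which holds uniformly in $\del$ under the exponent restriction $m<2d/(d-2)$ by Lemma \ref{lem:compact1} together with standard fractional Sobolev embeddings for kernels of the form \eqref{eq:scal2}. Strict convexity comes from $\mcAd$ being positive-definite on $\VVcd$ via Poincar\'e (Lemma \ref{lem:poincare}) together with the strict convexity of $|u|^m/m$ when $m>2$. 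Coercivity is obtained by expanding $\mcAd(w+w_D^\del,w+w_D^\del)\ge\tfrac{1}{2}\mcAd(w,w)-C\mcAd(w_D^\del,w_D^\del)$, using $\mcQ\ge 0$ since $\alpha\ge 0$, and invoking Lemma \ref{lem:poincare} to control $\|w\|_\VVdel$ by $|w|_\VVcd$ uniformly in $\del$; the linear term is tamed by Young's inequality. Weak lower semicontinuity then follows from continuity plus convexity in the standard way.

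For the a priori bound \eqref{weak-est}, I would test \eqref{weak-weakf} with $v=u-w_D^\del\in\VVcd\cap\alpha L^m(\omg)$, giving
\[
\mcAd(u-w_D^\del,u-w_D^\del)+\mcQ(u,u-w_D^\del)=(\fh,u-w_D^\del)_\omghd-\mcAd(w_D^\del,u-w_D^\del).
\]
Convexity of $Q$ yields the monotonicity estimate $\mcQ(u,u-w_D^\del)\ge\tfrac{1}{m}\mcQ(u,u)-\tfrac{1}{m}\mcQ(w_D^\del,w_D^\del)$, and the last term is controlled by $\tilde q(\|\gdd\|_\VVtd)$ through \eqref{eqn:extdp}. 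Cauchy--Schwarz with the $\VVsd$--$\VVcd$ duality, combined with $|w_D^\del|_\VVcd\le\|w_D^\del\|_\VVdel\le 2\|\gdd\|_\VVtd$ from \eqref{eqn:extd} and Young's inequality, bounds the right-hand side by $\epsilon|u-w_D^\del|_\VVcd^2+C_\epsilon(\|\fh\|_\VVsd^2+\|\gdd\|_\VVtd^2)$. Choosing $\epsilon$ small, absorbing the first term on the left, and using Poincar\'e once more to pass from $|u-w_D^\del|_\VVcd$ back to $\|u\|_\VVdel$ produces \eqref{weak-est}.

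The main obstacle is keeping every constant independent of $\del$ throughout the full range \eqref{str-dcases}: this is precisely the role of the scalings \eqref{eq:scal1} and \eqref{eq:scal2}, since it is only under those kernel normalizations that the Poincar\'e inequality (Lemma \ref{lem:poincare}) and the embedding $\VVcd\hookrightarrow L^m(\omg)$ come with uniform constants, and without that uniformity the passages to the local and fractional limits discussed later in the paper would break down.
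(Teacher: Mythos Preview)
Your proposal is correct and follows essentially the same approach as the paper: the direct method of the calculus of variations (coercivity via the $\del$-uniform Poincar\'e inequality, weak lower semicontinuity, strict convexity for uniqueness) for existence, then testing \eqref{weak-weakf} with $v=u-w_D^\del$ and applying Young's inequality to the nonlinear term for the a~priori bound. One small caveat: the uniform-in-$\del$ embedding $\VVcd\hookrightarrow L^m(\omg)$ you mention as a ``sharper'' alternative does not follow from Lemma~\ref{lem:compact1} (that is a compactness result along $\del_n\to 0$, not a fixed-$\del$ Sobolev inequality) and in fact fails for integrable kernels, but as you correctly note first, membership in $\alpha L^m(\omg)$ is already built into the admissible set, so the embedding is not needed.
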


\begin{proof}
Using the Poincar\'{e} inequality 
 in Lemma \ref{lem:poincare}, we see that the functional is coercive and continuous in the set $w_D^\del+\VVcd\cap\alpha L^m(\omg)$ with respect to the norm
 $\|\cdot\|_{L^2_0(\omg)}+ \alpha \|\cdot\|_{L^m(\omg)}$.
 Thus, there exists a minimizing sequence
 which remains uniformly bounded in  
 $w_D^\del+\VVcd\cap\alpha L^m(\omg)$. One can take a uniformly bounded and weakly convergent  
 sequence  $v_n=w_n + w_D^\del$, $w_n \in \VVcd\cap\alpha L^m(\omg)$ such that
 $$\lim_{n\to \infty} E(v_n)  = \inf_{v\in w_D^\del+ \VVcd\cap \alpha L^m(\omg)} E(v)> -\infty.$$
 Denote $u$ the limit of $v_n$, then $u \in  w_D^\del+ \VVcd\cap \alpha L^m(\omg)$, and
  we may use the weak lower semicontinuity of the functional $E=E(v)$ to get
 $$
  E(u) \leq \underline{\lim}_{n\to \infty}   E(v_n)  =   \inf_{v\in w_D^\del+ \VVcd\cap \alpha L^m(\omg)} E(v),
 $$
 which gives $u$ as the minimizer of $E(v)$ in $ w_D^\del+ \VVcd\cap \alpha L^m(\omg)$. One can easily see that the
 weak formulation is simply the Euler-Lagrange equation. We thus have 
 the existence of the solution. Since $E(v)$ is strictly convex, we also get the uniqueness of the solution.
 The {\it a~priori} estimates can be obtained from the weak formulation. Indeed, 
 \[
{\mcAd}(u,u-w_D^\del) + {\mcQ(u,u-w_D^\del) }
= (\widehat{f}, u-w_D^\del)_{\omghd} . 
\]
 {Then Young's inequality implies
\begin{align*}
\frac{1}{\alpha}Q(u,u-w_D^\del) & =\int_{\omg} u^{m}(\xb) d\xb-\int_{\omg} u^{m-1}(\xb) w_D^\del(\xb) d\xb\\
& \geq  \int_{\omg}  u^{m}(\xb) d\xb- \int_{\omg}  \frac{m-1}{m} u^{m}(\xb) +\frac{1}{m} (w_D^\del)^{m}(\xb) d\xb\\
& = \frac{1}{m} \int_{\omg} u^{m}(\xb) d\xb-\frac{1}{m}\int_{\omg} (w_D^\del)^{m}(\xb) d\xb .
\end{align*}
}
Note that for the linear case with $\alpha=0$, we can also use
the Lax--Milgram theorem to show the well-posedness of the  {problem} \eqref{weak-weakf}.
 \end{proof}

\begin{remark}[{a pure nonlocal Neumann problem}]\label{rem:neumann}\
For the case $\omgidn=\omgid$ and $\alpha=0$, \tie, a pure linear Neumann problem, due to the compatibility condition imposed on $\VVcd$, we can recover the original strong form
\eqref{str-vcp} from the linear functional $\fhvoh$ with
the following compatibility condition:
\begin{align}\label{weak-compat}
\int_{\omghd} \fh(\xb) \dxb = \int_{\omg} f(\xb) \dxb + \int_{\omgid} \gdn(\xb)\dxb =0. 
\end{align}
Meanwhile, for the nonlinear problem \eqref{str-vcpn}, the
condition is equivalently \eqref{str-vcpn-c}.
Similarly, a compatibility condition is also required for the ``regional" nonlocal diffusion problem given in \eqref{eqn:reg-nonlocal} or the equivalent form in \eqref{str-vcr}:
\begin{align}\label{weak-compat-reg}
\int_{\omg}  F(\xb, u(\xb)) d\xb = \int_{\omg_1}  F(\xb, u(\xb)) d\xb
+ \int_{\omg_2}  G(\xb, u(\xb)) d\xb = 
0
\end{align}
where $\omg_1$ and $\omg_2$ represent a decomposition of $\omg$ as in \eqref{str-vcr}.
The equation \eqref{weak-compat-reg} follows easily from the symmetry assumption on the nonlocal interaction kernel.
\end{remark}

\begin{remark}
 From the discussion here, we see that in the weak formulation, different parts of the data $\fh$ play different roles, in particular when considering the limiting regimes of $\delta\to 0$ or $\delta\to \infty$.
 That is, $f$ symbolically corresponds to the right-hand side of the equation while $\gdn$ corresponds to the Neumann data.
  \end{remark}

\begin{remark}
We note that other ways of defining a nonlocal Neumann problem 
  {can also be used.}
For additional
discussions about nonlocal Neumann problems, see \eg\  \cite{cortazar,amrt:10,du19cbms,md15non,md16na,shi2017convergence,li2019point,ttd17amc,d2020physically,Dengw:2018}.
In addition, we may also consider problems associated with the regional nonlocal and fractional Laplacian defined by \eqref{eqn:reg-nonlocal-op} and\eqref{intflreq} on the domain $\omg$.
{In particular, the bilinear form can be modified as 
\begin{align}\label{weak-ip1-regional}
\mcA_{\delta,\omg} (u,v)=
(u,v)_{\VVcdo} 
  =  \frac{1}{2}
\int_{\omg}\int_{\omg}  (v(\yb)-v(\xb))  (u(\yb)-u(\xb))   \gamd\xyp
 \dydx
 \end{align}
for all $u, v \in \VVcdo$. 
}
  \end{remark}

\begin{remark}
\label{rmk:nlrobin}
Recall the Robin type constraint specified by \eqref{nonlocalrobin} with
\eqref{str-gvc}, 
we can get similar well-posedness in such a case by modifying the bilinear form to
\begin{align}\label{weak-bform-robin}
  {\mcAd}(u,v) =  (u,v)_{\VVcd} + (\mcDd u,v)_{\omgid} ,\quad
\forall\: u, v\in\VVdel,
\end{align}
For given $\del>0$, if $(\mcDd u,u)_{\omgid} =0$ for $u\in \VVdel$ holds only if $u\mid_\omgid=0$, then the coercivity of the modified bilinear form can also be derived (e.g., by following similar discussions on nonlocal Poincar\'e inequality given in \cite{md15non}).
\end{remark}

To conclude this section, we note that 
one can establish the maximum principle for linear nonlocal diffusion problems as for the local case; see discussions in  \cite{dy19jsc,djlq20sirev}.

\vspace{5pt}
\section{Special examples and limiting cases}\label{fract}

In this section, we consider some special cases, including
nonlocal models with an integrable kernel that are of particular interest in applications like peridynamics.
Then,
 we consider the local ($\delta\to 0$) and fractional ($\delta\to \infty$) limits of nonlocal diffusion models parameterized by $\delta>0$.
 This again is to demonstrate the bridging role of the nonlocal models with a finite $\delta>0$, \ie, as illustrated in Figure.~\ref{fig:diagram}, which generalizes some similar pictures in \cite{tdg16acm} for homogeneous Dirichlet problems.
 
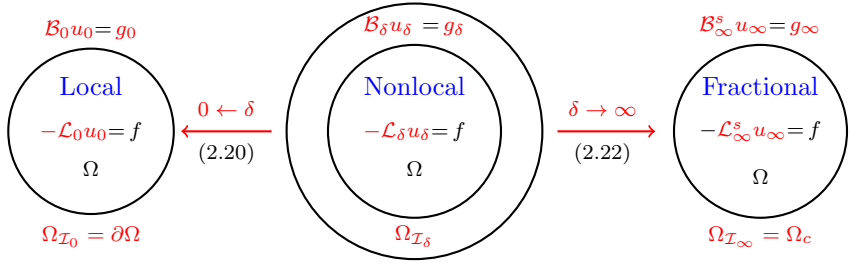
\begin{figure}
\centering
\begin{tikzpicture}[thick,scale=1, every node/.style={scale=1.}]
       \node[draw, circle, minimum width=22mm] at ({0}:0cm) {};
         \node(v3) at (0,-0.5)  {\fns{$\omg$}}; 
       \node(v1) at (0.,-1.4) {\textcolor{red}{\fns{$\omgio=\omgp$}}};
    \node(v2) at (0, 0) {\textcolor{red}{\fns{$-\mathcal{L}_{0} u_{0}$}}\fns{=$\,{f}$}};
             \node(v4) at (0,0.6)  {\textcolor{blue}{{Local}}}; 
      \node(v5) at (0,1.35) {\textcolor{red}{\fns{$ \mcBo{u}_{0}$}}\fns{=$\,\textcolor{red}{g_0}$}};
       \node(v4) at (1.8,0.3) {\textcolor{red}{\fns{$0 \leftarrow \delta$}}}; 
         \node(v16) at (1.8,-0.3) {\fns{\eqref{eq:scal1}}}; 
  \draw[red, thick, <-]  (1.2,0.) -- (2.4,0); 
      \node[draw, circle, minimum width=34mm] at ({0}:43mm) {};
 \node[draw, circle, minimum width=23mm] at ({0}:43mm) {};
    \node(v6) at (4.3,-0.5) {\fns{$\omg$}}; 
  \node(v7) at (4.3,-1.4) {\textcolor{red}{\fns{${\omg_{\mathcal{I}_\delta}}$}}};
    \node(v8) at (4.3,0.) {\textcolor{red}{\fns{$- \mcLd {u}_{\delta}$}}\fns{=$\,{f}$}};
                 \node(v4) at (4.3,0.6)  {\textcolor{blue}{Nonlocal}}; 
        \node(v9) at (4.3,1.4) {\textcolor{red}{\fns{$\mcBd{u}_{\delta}$}} \fns{=$\,\textcolor{red}{g_\delta}$}};
 \node(v10) at (6.8,0.3) {\textcolor{red}{\fns{$\delta \rightarrow \infty$}}}; 
  \node(v20) at (6.8,-0.3) {\fns{\eqref{eq:scal2}}}; 
  \draw[red, thick, ->]  (6.2,0.0) -- (7.5,0); 
    \node[draw, circle, minimum width=23mm] at ({0}:89mm) {};
         \node(v13) at (8.9,-0.6)  {\fns{$\omg$}};
                \node(v11) at (8.9,-1.4) {\textcolor{red}{\fns{$\omgii=\omgc$}}};
         \node(v9) at (8.9,1.4) {\textcolor{red}{\fns{$\mcBis {u}_{\infty}$}}\fns{=$\,\textcolor{red}{g_\infty}$}};
        \node(v12) at (8.9,0) {\fns{$-$}\textcolor{red}{\fns{$\mcLis {u}_{\infty}$}}\fns{={$\,{f}$}
        }};
                \node(v14) at (8.9,0.6)  {\textcolor{blue}{{Fractional}}}; 
\end{tikzpicture}\\
\caption{
Limits of nonlocal models with a finite interaction radius $\delta$ as $\delta\to 0$ (local limit) with a kernel given by \eqref{eq:scal1} and as $\delta\to\infty$ (fractional limit) with a kernel given by \eqref{eq:scal2}.}
\label{fig:diagram}
\end{figure}

\subsection{Nonlocal Laplacian with integrable kernels}\label{integrable}

We consider the case with a kernel satisfying
\begin{equation}\label{eq:kernel-integrable}
 \int_{\Rd} 
 \gamd\xyp d\yb = 
{\del^d}C_d(\del)  \int_{{B_1({\bf 0})} } 
 \gamma(|\zb|)\, d\zb < \infty.
\end{equation}

In this case, for any given $\delta>0$,
$ \mcLd $ and $\mcNd$ become bounded operators on the $L^2$ spaces for functions defined on respective domains.
One can see that the functions spaces are essentially the  $L^2$ spaces on the corresponding domains \cite{md14prsa}.
Then for the problem, 
\begin{equation}\label{str-vcp-more}
\begin{cases}
  - \mcLd u =  f(\xb), & \quad\forall\,  \xb\in\omg,
\\
  u = \gdd(\xb), &\quad\forall\,  \xb\in\omgidd,\\
 \mcNd u = \gdn(\xb), &\quad\forall\,  \xb\in\omgidn,
\end{cases}
\end{equation}
the basic existence of solution in $L^2(\omghd)$ can be obtained, similar to earlier discussion, for  $f\in L^2(\omg)$, $\gdd\in L^2(\omgidd)$ and
$\gdn\in L^2(\omgidn)$.

We present this special case to highlight the fact that the solutions to the nonlocal diffusion problem may not share the smoothing properties enjoyed by the solutions to typical elliptic PDEs; that is, the solutions may not have a higher order of differentiability than the data.

\subsection{Local limit}\label{local-limit}

We present some results concerning the local limits. We begin with a formal discussion.
For example, equation \eqref{weak-weakf}  is a nonlocal analog of the weak formulation corresponding to \eqref{str-pde}
with ${\mcB_0} u=u$
on $\omgpd$  and ${\mcB_0}  u=\frac{\partial u}{\partial \mathbf{n}}$
on $\omgpn$ with the weak form
given by
\begin{equation} \label{local-weakform}
    \int_\omg \nabla u \cdotsp \nabla v \dxb  { + \mcQ(u,v) }= \int_\omg fv\dxb + \int_{\omgpn}g^0_N v\dxb.
\end{equation}
For a pure {linear Neumann problem with $\omgpn=\omgp$, \ie, $\alpha=0$ in \eqref{local-weakform}}
the functions $f$ and $g_N$ satisfy
 the conventional compatibility condition 
\[
{\int_\omg f\dxb + \int_{\partial\omg}g^0_N \dxb=0.}
\] 

Next, we present some rigorous results concerning the limit of $\delta\to 0$ of \eqref{str-vcp} for kernels
satisfying \eqref{eq:scal0} and \eqref{eq:scal1}.
To properly discuss the local limit, we need to make several assumptions on the boundary domains and boundary data.  

\begin{assu}
\label{assumption-on-data}
We first make the following assumptions on the domains.
\begin{itemize}
\item[$\bullet$] The domains $\omgidd$ and $\omgidn$ are monotone decreasing sets as $\del\to0$ and $\overline{\omgidd}\supset \omgpd$ and $\overline{\omgidn}\supset \omgpn $. 
\item[$\bullet$] Moreover, for any $\del_1>\del_2>0$, we assume that $(\omg^D_{\mathcal{I}_{\del_1}}\backslash \omg^D_{\mathcal{I}_{\del_2}}) \cap \omg_{\mathcal{I}_{\del_2}} =\emptyset$. 
\end{itemize}
Next, we present the assumptions on the data.
\begin{itemize}
\item[$\bullet$]
For a constant $\del_0>0$, we assume that for all $\del<\del_0$, the function $w_D^\del$ in \eqref{eqn:extd}
{and\eqref{eqn:extdp}} is extended to $\omgh_{\del_0}$ with the extension bounded in $V_{c,\del_0}(\omgh_{\del_0})\cap \alpha L^m(\omg)$ and $w_D^\del$ converges in $L^2(\omgh_{\del_0})$ to $w_D^0 \in H^1(\omgh_{\del_0})$ as $\del\to0$, where $w_D^0|_{\omgpd}= g_D^0$. 
\item[$\bullet$]
We also assume that the Neumann data $\gdn$, as a function defined on $\omghd$, is uniformly bounded in the dual space of the energy space. Moreover,
\begin{align}
\label{eqn:neuconv}
\lim_{\del\to 0}
\int_{\omgidn} \gdn(\xb) v(\xb) \dxb = \int_{\omgpn} g_N^0(\xb) v(\xb) \dxb \,,\quad   \forall v\in C^\infty(\Rd).
\end{align}
\end{itemize}
\end{assu}

\begin{remark}
{Let us note that under the assumption on $m$ and the Sobolev imbedding theorem, we have $\alpha w_D^0\in \alpha L^m(\omg)$. }
\end{remark}

\begin{remark}
\label{rem:domains}
Notice the special geometric assumptions on the domains $\omgidd$ and $\omgidn$ in the above discussions. In particular, we require $\{ \overline{\omgidd} \}_{\del>0}$ and $\{ \overline{\omgidn} \}_{\del>0}$ to be sequences of nested sets that decrease monotonically to $\omgpd$ and $\omgpn $, respectively, as $\delta\to0$. These assumptions rule out the more complicated situations where $\omgidd$ or $\omgidn$ may not be attached to the boundary $\partial\omg$. 
{Some numerical experiments are provided in cases these geometric assumptions are violated.}
\end{remark}

With these assumptions, we present the convergence theorem for $\delta\to0$. 

\begin{thm}\label{thm:convtolocal}
Let $u_\del$ and $u_0$ be the weak solutions to the nonlocal and local boundary value problems, respectively. Assume that $\|\fh\|_\VVsd$ and $\| \gdd\|_\VVt$ are bounded uniformly in $\del$, then we have 
\[
\| u_\del - u_0\|_{L^2(\omg)} \to 0\,,
\]
as $\del\to0$. 
\end{thm}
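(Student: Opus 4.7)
The plan is to combine the uniform a priori estimate from \eqref{weak-est} with the Bourgain-Brezis-Mironescu compactness of Lemma \ref{lem:compact1}, and then pass to the limit in the nonlocal weak formulation \eqref{weak-weakf}. Since $\|\fh\|_{\VVsd}$ and $\|\gdd\|_{\VVtd}$ are assumed bounded uniformly in $\del$, \eqref{weak-est} gives
\[
\|u_\del\|_{\VVdel}^2 + \mcQ(u_\del,u_\del) \le C
\]
with $C$ independent of $\del$. Writing $u_\del = w_D^\del + \tilde u_\del$ with $\tilde u_\del \in \VVcd\cap \alpha L^m(\omg)$, the extension of $w_D^\del$ to $\omgh_{\del_0}$ provided by Assumption \ref{assumption-on-data} implies that $\tilde u_\del$ itself is uniformly bounded in the nonlocal energy and in $\alpha L^m(\omg)$.

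First, fix $\del_0>0$ and extend $\tilde u_\del$ by zero on $\omgh_{\del_0}\setminus \omghd$ for $\del<\del_0$. The geometric hypothesis $(\omg^D_{\mathcal{I}_{\del_0}}\setminus\omgidd)\cap \omgid=\emptyset$ in Assumption \ref{assumption-on-data} ensures that this extension is consistent with the Dirichlet volume constraint associated with the $\del_0$-geometry, while the compact support of $\gamd$ in balls of radius $\del<\del_0$ keeps the nonlocal energy of the extended function on $\omgh_{\del_0}$ uniformly bounded. Lemma \ref{lem:compact1} then furnishes a subsequence (not relabelled) and a limit $u_0\in H^1(\omgh_{\del_0})$ with $u_\del\to u_0$ strongly in $L^2(\omgh_{\del_0})$. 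Because $w_D^\del\to w_D^0$ in $L^2$ with $w_D^0|_{\omgpd}=g_D^0$ and $\tilde u_\del$ vanishes on $\omgidd\supset\omgpd$, the Dirichlet trace of $u_0$ on $\omgpd$ equals $g_D^0$.

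Next, I would pass to the limit in \eqref{weak-weakf} against a smooth test function $v\in C^\infty(\overline{\omg})$ supported away from $\omgpd$. The monotone decrease of $\omgidd$ toward $\omgpd$ ensures that the zero extension of $v$ belongs to $\VVcd$ for all sufficiently small $\del$, so $v$ is admissible. Standard Bourgain-Brezis-Mironescu type arguments combined with the strong $L^2$ convergence of $u_\del$ then give
\[
\mcAd(u_\del,v)\to \int_\omg \nabla u_0\cdot\nabla v\,\dxb.
\]
The data side \eqref{weak-lfunc} splits as $\int_\omg f v\,\dxb+\int_{\omgidn}\gdn v\,\dxb$, the second term converging to $\int_{\omgpn} g_N^0 v\,\dxb$ by \eqref{eqn:neuconv}. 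The nonlinear contribution $\mcQ(u_\del,v)$ is controlled by the uniform $L^m(\omg)$ bound together with strong $L^2$ convergence, which via interpolation and Vitali's theorem (using $m<2d/(d-2)$) yield $\mcQ(u_\del,v)\to \mcQ(u_0,v)$.

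The main obstacle will be making the admissibility and approximation arguments for test functions rigorous uniformly in $\del$: one must show that smooth functions vanishing in a neighborhood of $\omgpd$ form a class dense enough in the appropriate local energy space to identify $u_0$ as the unique weak solution of \eqref{local-weakform} with mixed data $(g_D^0,g_N^0)$. Uniqueness of that local solution then promotes subsequential convergence to convergence of the full family $\{u_\del\}$, completing the proof that $\|u_\del-u_0\|_{L^2(\omg)}\to 0$ as $\del\to 0$.
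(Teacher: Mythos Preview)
Your overall strategy---uniform a~priori bounds, Bourgain--Brezis--Mironescu compactness, then passage to the limit in the weak form against smooth test functions vanishing near $\omgpd$---is exactly the paper's approach. The treatment of the nonlinear term via uniform $L^m$-bounds and Vitali is a legitimate alternative to the paper's explicit interpolation estimate $\|u_\del-u^\ast\|_{L^{m-1}}^{m-1}\le C\|u_\del-u^\ast\|_{L^2}$ (which uses $2(m-2)\le m$).

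There is, however, a genuine gap in your compactness step. You extend $\tilde u_\del$ by zero to all of $\omgh_{\del_0}$ and assert that the nonlocal energy on $\omgh_{\del_0}\times\omgh_{\del_0}$ stays uniformly bounded. This is not justified: the energy seminorm $|\cdot|_{\VVcd}$ is taken over $\omgtd=(\omghd)^2\setminus(\omgid)^2$, and therefore does \emph{not} control interactions within $\omgidn\times\omgidn$, nor the interactions between $\omgidn$ (where $\tilde u_\del$ need not vanish) and the region $\omgh_{\del_0}\setminus\omghd$ where you impose the value zero. For singular kernels these extra contributions can be unbounded; the jump you create along the outer edge of $\omgidn$ is precisely the kind of discontinuity that a fractional-type kernel penalizes with infinite energy.

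The paper sidesteps this by applying Lemma~\ref{lem:compact1} on the \emph{smaller} Lipschitz domain $\omg_1=\overline{\omg\cup\omg^D_{\mathcal{I}_{\del_0}}}$, which excludes the Neumann layer $\omgidn$ altogether. The geometric hypothesis $(\omg^D_{\mathcal{I}_{\del_0}}\setminus\omgidd)\cap\omgid=\emptyset$ guarantees that every pair $(\xb,\yb)\in\omg_1\times\omg_1$ with $|\xb-\yb|<\del$ and at least one point in $\omg$ actually lies in $(\omg\cup\omgidd)^2\setminus(\omgidd)^2\subset\omgtd$, so the full double integral over $\omg_1\times\omg_1$ is dominated by $|\tilde u_\del|_{\VVcd}^2$. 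The Dirichlet trace of the limit on $\omgpd$ is then read off from the vanishing of $\tilde u_\del$ on $\omg^D_{\mathcal{I}_{\del_0}}$. Replacing your $\omgh_{\del_0}$ by this $\omg_1$ closes the gap.
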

\begin{proof}
From the well-posedness theorem, it is easy to see that $\| u_\del \|_{\VVdel}$,
$\| u_\del - w_D^\del \|_{\VVdel}$ and  $\mcQ(u_\del,u_\del)$
are uniformly bounded and $(u_\del - w_D^\del)|_{\omgidd} =0 $. Take $\del_0>0$ and let $\omg_1 := \overline{\omg\cup\omg^D_{\mathcal{I}_{\del_0}}}$. By the assumptions on the boundary sets, for any $\del<\del_0$, we can always do a zero extension of the function $u_\del - w_D^\del$
onto $\omg_1$, and 
\[
\begin{split}
&\int_{\omg_1}\int_{\omg_1} \gamd(\xb, \yb) \left( (u_\del - w_D^\del)(\yb) - (u_\del - w_D^\del)(\xb) \right)^2 \dyb\dxb  \\
\leq & \int_{(\omg\cup\omgidd)^2 \backslash (\omgidd)^2}\gamd(\xb, \yb) \left( (u_\del - w_D^\del)(\yb) - (u_\del - w_D^\del)(\xb) \right)^2 \dyb\dxb \\
\end{split}
\]
We can then use Lemma \ref{lem:compact1} to conclude that any $L^2(\omg_1)$-limit (up to a subsequence) of $u_\del - w_D^\del$ as $\del\to 0$ belongs to $H^1(\omg_1)$. Let $v^\ast$  denote the limit function and $u^\ast:= v^\ast+w_D^0$. By the assumption on the Dirichlet data, $u^\ast$ is the $L^2(\omg_1)$-limit of $u_\del$. We want to show $u^\ast =u_0$.  First, since $(u_\del -w_D^\del)|_{\omg^D_{\mathcal{I}_{\del_0}}} =0 $ for all $\del<\del_0$, we then have $v^\ast|_{\omg^D_{\mathcal{I}_{\del_0}}} =0 $. This implies that $u^\ast\in H^1(\omg)$ and $u^\ast|_{\omgpd} = w_D^0|_{\omgpd} = g_D^0$. 
Now we only need to show that $u^\ast$ satisfies the weak form \eqref{local-weakform} for any $v\in C^\infty(\overline{\omg})$ with  $v|_{\omgpd} =0$. Assume that the extension of $v$ is made such that $v\in C^\infty(\overline{\omg\cup \omg_{\mathcal{I}_{\del_0}}})$ and $v|_{\omg^D_{\mathcal{I}_{\del_0}}}=0$. Since $u_\del$ is the weak solution to the nonlocal problem, we have
\begin{equation} \label{eq:convthm_eqn1}
   {\mcAd}(u_\del,v) {+\mcQ(u_\del,v)}= (f, v)_{\omg} + (g^\del_N , v)_{\omgidn}\,. 
\end{equation}
The right-hand side of \eqref{eq:convthm_eqn1} converges to the right-hand of \eqref{local-weakform} from the assumption on the Neumann boundary data. Using the same argument in \cite[Theorem 3.5]{ttd17amc}, one can show that as $\del\to0$,
\[
\int_{\omg}\int_{\omg} \gamd(\xb, \yb) (u_\del(\yb)-u_\del(\xb)) (v(\yb)-v(\xb)) \dyb \dxb \to \int_{\omg} \nabla u^\ast \cdot\nabla v \dxb \,. 
\]
Note also that 
\[
\begin{split}
&\left|\iint_{(\omgid\times \omg)\cup(\omg\times \omgid) }\gamd(\xb, \yb) (u_\del(\yb)-u_\del(\xb)) (v(\yb)-v(\xb)) \dyb \dxb \right|\\
\leq & \| u_\del\|^2_{\VVdel} \cdot \iint_{(\omgid\times \omg)\cup(\omg\times \omgid)} \gamd(\xb, \yb)(v(\yb)-v(\xb))^2 \dyb \dxb \\
\leq & C \int_{\omgid} |\nabla v|^2 \dxb \to 0 \,,
\end{split}
\]
as $\del\to 0$. We then have the convergence of ${\mcAd}(u_\del,v)$ to the integral of $\nabla u^\ast \cdot\nabla v$ on $\omg$. {Meanwhile, by the assumption on $m$, we have $u_\ast\in L^{m}(\omg)$. Moreover, we observe that there is a constant $C>0$ only depends on $\omg$ and $m$ such that
\begin{align*}
\|u_\del- u^\ast\|_{L^{m-1}(\omg)}^{m-1}
& \leq \|u_\del- u^\ast\|_{L^2(\omg)}
\|u_\del- u^\ast\|_{L^{2(m-2)}(\omg)}^{m-2}\\
&\leq C \|u_\del- u^\ast\|_{L^2(\omg)}
\|u_\del- u^\ast\|_{L^{m}(\omg)}^{m-2}\\
&\leq C \|u_\del- u^\ast\|_{L^2(\omg)}
(\|u_\del\|_{L^{m}(\omg)}+\| u^\ast\|_{L^{m}(\omg)})^{m-2},
\end{align*}
where we have used $2(m-2)\leq m$ by the choice of $m$.
Thus, we get the convergence of $u_\del$ to $ u^\ast$ in $L^{m-1}(\omg)$ (and consequently the convergence of $q(u_\del)$ to $ q(u^\ast)$ in $L^{1}(\omg)$) from the 
convergence of $u_\del$ to $ u^\ast$ in $L^{2}(\omg)$.
We thus have the convergence of  $\mcQ(u_\del, v)$ to $\mcQ(u_\ast, v)$.} 
Therefore $u^\ast$ is the weak solution of the local boundary value problem, and this implies $u^\ast=u_0$. 
\end{proof}

\begin{remark}
An example for data satisfying \eqref{eqn:neuconv} for $\omg$ being the unit ball and
$\omgidn=\omgid$ is {the case of $\gdn(\xb)$ being given by 
$\delta^{-1} g_N^0(\xb/|\xb|)$} for any $\xb\in\omgid$.
Similar constant extensions along normal directions of the boundary $\partial\omg$ can be constructed for smooth $\omg$.
Other extensions can also be considered, 
see, for example, \cite{d2020physically,you2020asymptotically}. 
One may also examine how the convergence order depends on the data and the solution; see, for example, \cite{dy19jsc,du2019uniform,d2020physically}. It is no surprise that the constant extension may not be a desirable choice in order to achieve higher convergence order. Moreover, such a constant extension only makes sense if the resulting nonlocal data satisfies conditions specified in the assumption \ref{assumption-on-data}.
\end{remark}

\begin{remark}
A direct connection between the fractional Laplacian and
the local Laplacian is the case of $s\rightarrow 1^-$.
It is known that, see \eg\ \cite{Biccari:2018},  the solution of integral fractional diffusion model \eqref{eq:fracPoisson} strongly converges to the solution of the local diffusion problem in $H^{1-\epsilon}(\Omega)$. Related discussions on Neumann boundary problems can be found in \cite{Gounoue20,foghem2022general}.
\end{remark}

\begin{remark}
We may also consider the limit of Robin type problems considered in 
the remark \ref{rmk:nlrobindef} and \ref{rmk:nlrobin} 
assuming that \eqref{str-robin-vc} holds.
Then one may also consider the 
local $\delta \to 0$ limit to the corresponding boundary value problem of the local Laplacian with the local Robin boundary condition.
\end{remark}

\begin{remark}
The special example considered here corresponds to variational problems associated with convex energy so that the solutions are unique. One can consider extensions to problems associated with non-convex energy, see related discussions in \cite{md15non} using $\Gamma$-convergence.
\end{remark}

\subsection{Fractional limit}
Here, we make more studies on treating fractional PDEs as either specialized nonlocal models or limiting cases.

We can let $\delta=\infty$ for kernels of the types in \eqref{eq:scal2}. We identify some of the spaces in this case: 
\[V_\infty(\omgh_\infty)=H^s(\Rd),\quad
V_{c,\infty}(\omgh_\infty)=  {H}^{s}_c(\Rd):=\{u\in H^{s}(\Rd)  \colon  u=0 \ \; \forall\,  \xb\in {\omgiid}\}
\]
\begin{remark}
We consider the case that $\omgiid=\omgii$. For $s>1/2$, then ${H}^{s}_c(\omg)$ coincides with the space $H_{0}^{s}(\omg)$ that is the closure of $C_{0}^{\infty}(\omg)$ with respect to the $H^{s}(\omg)$-norm, whereas for $s<1/2$, ${H}^{s}_c(\omg)$ is isomorphic to $H^{s}(\omg)$ (\ie, functions in
 $H^{s}(\omg)$ with zero extension to $\omg_c$).
In the critical case $s=1/2$, ${H}^{s}_c(\omg)\subsetneq H^{s}_{0}(\omg)$. See \eg\ \cite[Chapter~3]{Mclean:2000} for a detailed discussion.
\end{remark}

Let us first consider an example of a pure Dirichlet type problem.
For  $\omgiid=\omgii=\omgc$,  the following weak formulation has been given in \cite{AinsworthGlusa2018_TowardsEfficientFiniteElement}: 
\begin{equation}\label{eq:fracPoissonVariational}
  \begin{minipage}[b]{25pc}
  {Find} $ u(\xb)\in {H}^{s}(\Rd)$  such that $ u(\xb)=g_D^\infty(\xb) $ for all $ \xb\in\omgc $  and
  \[
  \mcA_s(u,v)=\fvo {+} (g_D^\infty,v)_{{\omg,\omgc}}\qfa v\in {H}^{s}_c(\Rd),
  \]
  \end{minipage}
\end{equation}
where
\begin{align}
\label{aaaaaa}
 \mcA_s(u,v)
  &=
  \frac{1}{2} \int_{(\Rd\times\Rd)\setminus (\omgc\times\omgc)} \;   (u({\yb})-u({\xb}))  (v({\yb}) -v({\xb}) )  \gams\xyp \dydx \notag\\
  & = 
  \underbrace{ {\frac{1}{2}} \int_{\omg}\int_{\omg} \;   (u({\yb})-u({\xb}))  (v({\yb}) -v({\xb}) )  \gams\xyp \dydx}_{\mcA_{\omg,\omg}(u,v)} \notag \\
    &\quad\, + \underbrace{
    \int_{\omg}u({\xb})v({\xb})\int_{{\omgc}} \gams\xyp \dydx
    }_{\mcA_{\omg,\omgc}(u,v)}
\end{align}
and
\begin{equation}\label{rhsfl}
(g_D^\infty,v)_{\omg,\omgc} = \int_\omg v(\xb) \int_\omgc g_D^\infty(\yb)\gams\xyp \dydx.
\end{equation}

It is easy to see that due to the Dirichlet constraint on $\omgiid=\omgii=\omgc$, $u(\xb)=g_D^\infty(\xb)$ and $v(\xb)=0$ for $\xb\in\omgc$. Thus,
\begin{align*}
(g_D^\infty,v)_{\omg,\omgc} & = \int_\omg v(\xb) \int_\omgc g_D^\infty(\yb)\gams\xyp \dydx\\
&= \int_\omg v(\xb) \int_\omgc u(\yb)\gams\xyp \dydx
\end{align*}
Meanwhile, assuming that Fubini's theorem holds, then
\begin{align*}
 \mcA_{\omg,\omgc}(u,v) - (g_D^\infty,v)_{\omg,\omgc} & = \int_\omg \int_\omgc v(\xb) (u(\xb)- u(\yb))\gams\xyp \dydx\\
& = \int_\omg \int_\omgc (v(\xb) - v(\yb)) (u(\xb)- u(\yb))\gams\xyp \dydx\\
& = \int_\omgc \int_\omg (v(\xb) - v(\yb)) (u(\xb)- u(\yb))\gams\xyp \dydx\\
& = \frac{1}{2}\int_\omg \int_\omgc (v(\xb) - v(\yb)) (u(\xb)- u(\yb))\gams\xyp \dydx\\
&\quad + \frac{1}{2}\int_\omgc \int_\omg (v(\xb) - v(\yb)) (u(\xb)- u(\yb))\gams\xyp \dydx
\end{align*}

Thus, \eqref{eq:fracPoissonVariational} is equivalent to that defined \eqref{weak-weakf} for $\delta=\infty$ with $\mcQ =0$.

\begin{remark}
One may also enforce the volume constraints via Lagrange multipliers, see e.g.  \cite{Acosta:2019}.
\end{remark}

One can derive some rigorous results on the limit of \eqref{str-vcp} as $\delta\to \infty$ for kernels satisfying \eqref{eq:scal2}, particularly for the case of pure Dirichlet condition. For example, one can extend the result given \cite{tdg16acm} for the case of
$\gdd\equiv 0$ and $g_D^\infty \equiv 0$ (homogeneous Dirichlet problem) to inhomogeneous data.

\begin{thm}
For kernels satisfying \eqref{eq:scal2},
let $u_\del$ and $u_\infty$ be the weak solutions to the nonlocal and fractional Dirichlet boundary value problems associated with
an element $w\in H^{s}(\Rd)$ and
$\gdd=w\mid_{\omgid}$ and
$g_D^\infty=w\mid_{\omgc}$ respectively. Assume that $\|\fh\|_\VVsd$ is bounded uniformly in $\del$, then we have 
\[
\| u_\del - u_\infty\|_{H^{s}(\omg)} \to 0\,,
\]
as $\del\to \infty$. 
\end{thm}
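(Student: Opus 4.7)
The plan is to reduce to homogeneous Dirichlet data by subtracting the lift $w$, establish a $\delta$-uniform $H^s(\Rd)$-bound on the difference, extract a weak limit, identify it via the fractional variational form, and upgrade to strong convergence through convergence of the energy. Setting $\tilde u_\delta := u_\delta - w \in \VVcd$ (extended by zero outside $\omghd$, consistent with $\tilde u_\delta|_{\omgid} = 0$) and $\tilde u_\infty := u_\infty - w \in H^s_c(\Rd)$, the two weak formulations read
\[
\mcAd(\tilde u_\delta, v) = (f,v)_\omg - \mcAd(w, v) \quad \forall v \in \VVcd,
\]
\[
\mcA_\infty(\tilde u_\infty, v) = (f, v)_\omg - \mcA_\infty(w, v) \quad \forall v \in H^s_c(\Rd),
\]
since $\gdd = w|_{\omgid}$ and $g_D^\infty = w|_{\omgc}$ by hypothesis, and $\fh$ equals $f$ on $\omg$ and vanishes elsewhere (pure Dirichlet).

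The key a priori estimate is a $\delta$-uniform bound $\|\tilde u_\delta\|_{H^s(\Rd)} \leq C$. For any $v \in \VVcd$ zero-extended to $\Rd$, splitting $[v]_{H^s(\Rd)}^2$ into the contributions from $|\yb-\xb|<\delta$ and $|\yb-\xb|\geq\delta$ yields the two-sided comparison
\[
\tfrac{2}{\cds}|v|_{\VVcd}^2 \;\leq\; [v]_{H^s(\Rd)}^2 \;\leq\; \tfrac{2}{\cds}|v|_{\VVcd}^2 + C\delta^{-2s}\|v\|_{L^2(\omg)}^2,
\]
where the tail estimate follows from $(v(\yb)-v(\xb))^2 \leq 2(v(\xb)^2 + v(\yb)^2)$ and $\int_{|\zb|\geq\delta}|\zb|^{-d-2s}d\zb = O(\delta^{-2s})$. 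Testing the nonlocal equation with $\tilde u_\delta$, using Cauchy--Schwarz in the bilinear form, the bound $|w|_{\VVcd} \leq C[w]_{H^s(\Rd)}$ (immediate from $\gamd \leq \gams$), and the $\delta$-uniform Poincar\'e inequality of Lemma 3.2, gives $|\tilde u_\delta|_{\VVcd} \leq C$ independent of $\delta$, and hence the desired $H^s(\Rd)$-bound.

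Weak compactness in $H^s(\Rd)$ and Rellich--Kondrachov extract a subsequence $\tilde u_{\delta_n} \rightharpoonup \tilde u^*$ weakly in $H^s(\Rd)$ with strong convergence in $L^2(\omg)$. Since $\tilde u_{\delta_n}$ vanishes on $\omg_{\mcI_{\delta_n}}$ and these sets exhaust $\omgc$, the limit satisfies $\tilde u^* \in H^s_c(\Rd)$. For any $v \in C_c^\infty(\omg)$, which is a valid test in $\VVcd$ for all large $\delta$, one passes to the limit in $\mcAd(\tilde u_{\delta_n}, v) = (f,v)_\omg - \mcAd(w, v)$: the term $\mcAd(w, v) \to \mcA_\infty(w, v)$ by dominated convergence with dominating function $|w(\yb)-w(\xb)||v(\yb)-v(\xb)|\gams$, while the principal term is handled by splitting
\[
\mcAd(\tilde u_{\delta_n}, v) - \mcA_\infty(\tilde u^*, v) = \mcA_\infty(\tilde u_{\delta_n} - \tilde u^*, v) - \tfrac{1}{2}\int_{|\yb-\xb|\geq\delta_n}(\tilde u_{\delta_n}(\yb) - \tilde u_{\delta_n}(\xb))(v(\yb) - v(\xb))\gams\, d\xb\, d\yb,
\]
where the first piece vanishes by weak $H^s$-convergence and the second by Cauchy--Schwarz together with the uniform $H^s$-bound on $\tilde u_{\delta_n}$ and the vanishing long-range seminorm of $v \in C_c^\infty$. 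Density of $C_c^\infty(\omg)$ in $H^s_c(\Rd)$ and uniqueness of the fractional solution force $\tilde u^* = \tilde u_\infty$; a standard subsequence argument then lifts this to convergence of the full sequence.

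Strong convergence follows from weak convergence together with convergence of norms. Taking $v = \tilde u_\delta$ in the nonlocal equation and passing to the limit by the same tools yields $|\tilde u_\delta|_{\VVcd}^2 = (f, \tilde u_\delta)_\omg - \mcAd(w, \tilde u_\delta) \to (f, \tilde u_\infty)_\omg - \mcA_\infty(w, \tilde u_\infty) = \mcA_\infty(\tilde u_\infty, \tilde u_\infty)$, which combined with the short/long range comparison from above gives $[\tilde u_\delta]_{H^s(\Rd)} \to [\tilde u_\infty]_{H^s(\Rd)}$. In the Hilbert space $H^s(\Rd)$, weak convergence plus convergence of norms delivers strong convergence, so $\|u_\delta - u_\infty\|_{H^s(\omg)} \to 0$. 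The main obstacle is the simultaneous $\delta$-dependence of the argument $\tilde u_\delta$ and of the bilinear form $\mcAd$; both are controlled uniformly by the short/long range splitting with its $O(\delta^{-2s})$ tail, which serves simultaneously as the bridge between $|\cdot|_{\VVcd}$ and $[\cdot]_{H^s(\Rd)}$ and as the main quantitative tool for passing $\mcAd$ to $\mcA_\infty$ on relevant pairs.
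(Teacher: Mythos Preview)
Your proposal is correct and follows exactly the reduction the paper uses: subtract the lift $w$ to pass to homogeneous Dirichlet data, observe $\mcAd(w,v)\to\mcA_s(w,v)$ for test functions, and then invoke the convergence for the homogeneous problem. The paper's own argument is a two-sentence sketch that cites \cite{tdg16acm} for the homogeneous case; you have supplied precisely those deferred details (the short/long-range splitting giving the uniform $H^s(\Rd)$ bound, weak compactness and identification of the limit, and the norm-convergence upgrade to strong convergence), so there is no substantive methodological difference.
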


Indeed, one can get the above conclusion by considering the corresponding problems for
$u_\del -w$ and $u_\infty-w$
with homogeneous Dirichlet data and noticing that 
$\mcAd(w,v) \to \mcA_s(w,v)$
as $\del\to \infty$ for any 
$v\in H^s_c(\Rd)$. Then the convergence of $u_\del \to u_\infty$ can be derived from a similar convergence result for the homogeneous Dirichlet problem, see \cite{tdg16acm} for more discussions on the latter.

\begin{remark}
In the more general case, further studies are needed to characterize the solution spaces and the weak form as $\del\to \infty$  for kernels satisfying \eqref{eq:scal2}. For example, in \cite{Dipierro:2017},  some weighted norm based on the Neumann data $g$ is introduced, which is involved in the definition of weak solution.
\end{remark}

\section{Numerical experiments and discussion}

In this section, we shall present some numerical examples to show the behaviours of the solution to
the nonlocal model \eqref{str-vcp} with inhomogeneous boundary conditions. 
Since the Dirichlet boundary conditions have been intensively studied in existing works,
we focus on the cases for
the pure Neumann or mixed type boundary conditions here.

We consider a one-dimensional nonlocal problem in the unit interval $\Omega = (0, 1)$
with boundary region $\omgid =(-\delta,0)\cup(1,1+\delta)$. Throughout the section,
we choose the kernel functions in the fractional type:
\begin{equation*} 
\gamma_\delta(x,y) = C_{s,\delta} |x-y|^{-1-2s} \chi_{B_\delta(0)}(x-y)
\end{equation*} 
Here the normalization constant $C_{s,\delta}(\delta)$ is defined by
\begin{equation*} 
C_{s,\delta} =\begin{cases}
  (2-2s)\delta^{-2+2s},\quad& \text{for}~~\delta\le \delta_s,\\
   \ffrac{2^{2s}s\Gamma (s+\gfrac{1}{2})}{\pi^{\frac12}\Gamma (1-s)},\quad&\text{for}~~ \delta\ge \delta_s,
\end{cases} \quad \text{with}\quad 
\delta_s = \Big( \ffrac{(2-2s)\pi^{\frac12}\Gamma(1-s)}{2^{2s}s\Gamma(s+1/2)} \Big)^{\frac1{2-2s}}.
\end{equation*} 
In our computation, we use the standard Galerkin finite element method with uniform mesh size $h=1/M$. Without loss of generality, we assume that $h\ll\delta$ and $r=\delta/h$ is an integer. Define the grid points $\{x_r\}_{i=-r}^{M+r}$ such that $x_r = rh$, we then define $X_h$ to be the set of functions in $\VVcd$ (see the definition in \eqref{weak-espacec}) which are 
linear when restricted to the
subintervals $[x_i, x_{i+1}]$, $i=-r,\ldots,M+r$. 
We fix a sufficiently small mesh size $h=10^{-4}$.

\vskip5pt

\textbf{Example 1. Local limit: pure Neumann boundary conditions.}
To begin with, we examine the nonlocal model  \eqref{str-vcp} with the inhomogeneous Neumann boundary conditions. We define the source term 
$ f(x) = x(1-x)$ for all $x\in \Omega$
and the Neumann type volumetric constraint
$$g_N^\delta(x) =\begin{cases}
  -\ffrac1{6\delta},\quad &x\in(-\delta,0),\\
  0,\quad &x\in(1,1+\delta).
\end{cases} $$
Note that the source term $f$ and boundary data $g_N^\delta$ {satisfy the compatibility condition \eqref{weak-compat}}, which implies the existence of the weak solution.
We shall test two types of Neumann Boundary conditions that correspond to the bilinear forms \eqref{weak-bform}
and \eqref{weak-ip1-regional} respectively, and thus implying implicitly the use of the operators $\mcNd$ and $\mcN_{\delta,\omghd}$ respectively, where
 \begin{equation}\label{eqn:n1}
 \mcNd u(x)  := - \int_\omg (u(y)-u(x)) \gamma_\delta(|x-y|)\,\mbox{d} y,
  \quad\forall\,  x\in\omgid. 
   \end{equation}
and
\begin{equation}\label{eqn:n2}  
\mcN_{\delta,\omghd} u(x)  := - \int_\omghd (u(y)-u(x)) \gamma_\delta(|x-y|)\,\mbox{d} y, \quad\forall\,  x\in\omgid.  
\end{equation}
For both cases, the local limit
is the steady diffusion problem
\begin{equation}\label{eqn:pde-local}
    -u''(x) = f(x)~~ \text{for} ~~x\in \Omega,
    \qquad \text{with}~~u'(0)=1/6~~\text{and}~~u'(1)=0.
\end{equation}

With the nonlocal Neumann boundary condition $\mcNd u = g_N^\delta$,
the finite element scheme for the nonlocal problem reads:
find $u_h \in X_h$ such that for all $v_h \in X_h$,
\begin{equation}\label{fem1}
\begin{split}
{\mcAd(u_h, v_h)=}
  \frac{1}{2}
\int_{\omgtd}  (v_h(y)-v_h(x)) & (u_h(y)-u_h(x)) \gamma_\delta(x-y)
 \,dy\,dx \\
 &= \int_\Omega f(x) v_h (x) \, d x + \int_{\Omega_{\mathcal{I}_\delta}} g_N^\delta v_h(x)\,dx.
 \end{split}
\end{equation}
Meanwhile, with the second kind of 
Neumann boundary condition $
\mcN_{\delta,\omghd} u = g_N^\delta$,
the corresponding finite element approximation reads: 
find $u_h \in X_h$ such that for all $v_h \in X_h$,
\begin{equation}\label{fem2}
\begin{split}
{\mcA_{\delta,\omghd} (u_h,v_h)=}
  \frac{1}{2}
\int_{\omghd\times\omghd} & (v_h(y)-v_h(x))  (u_h(y)-u_h(x)) \gamma_\delta(x-y)
 \,dy\, dx \\
 &= \int_\Omega f(x) v_h (x) \, d x + \int_{\Omega_{\mathcal{I}_\delta}} g_N^\delta v_h(x)\,dx.
  \end{split}
\end{equation}
Both finite dimensional problems \eqref{fem1} and \eqref{fem2} are well-posed by the Lax--Milgram theorem and the nonlocal version of Poincar\'e inequality in Lemma \ref{lem:poincare}.

{Figure \ref{fig:Neumann} presents the numerical solutions in $(0,1)$
with various $\delta$ and $s$, while Figure \ref{fig:Neumann-x0} zooms in 
on the solution profile near the boundary $x=0$. In the case that $s=-1$, the solution is nonsmooth 
and we observe discontinuity at $x=0$, 
while the solution is continuous in $\omghd$ in cases that $s=0.25$ and $0.75$.} As $\delta \rightarrow 0$, the numerical results clearly indicate that the solution to the
nonlocal diffusion problem converges to $u_0$, the solution to the local problem. This fully supports our theoretical finding in Theorem \ref{thm:convtolocal}. Meanwhile, with the relatively large nonlocal horizon $\delta$, our empirical experiments also show that the solution of the boundary operator $\mcN_{\delta,\omghd}$ is closer to the local solution $u_0$. Finally, in Table \ref{tab:N-rate}, we present the convergence rate as 
$\delta \rightarrow 0$ for both cases. These interesting phenomena warrant further investigation in our future studies.

\begin{figure}[hbt!]
\setlength{\tabcolsep}{0pt}
   \centering
   \begin{tabular}{ccc}
   \includegraphics[width=0.33\textwidth]{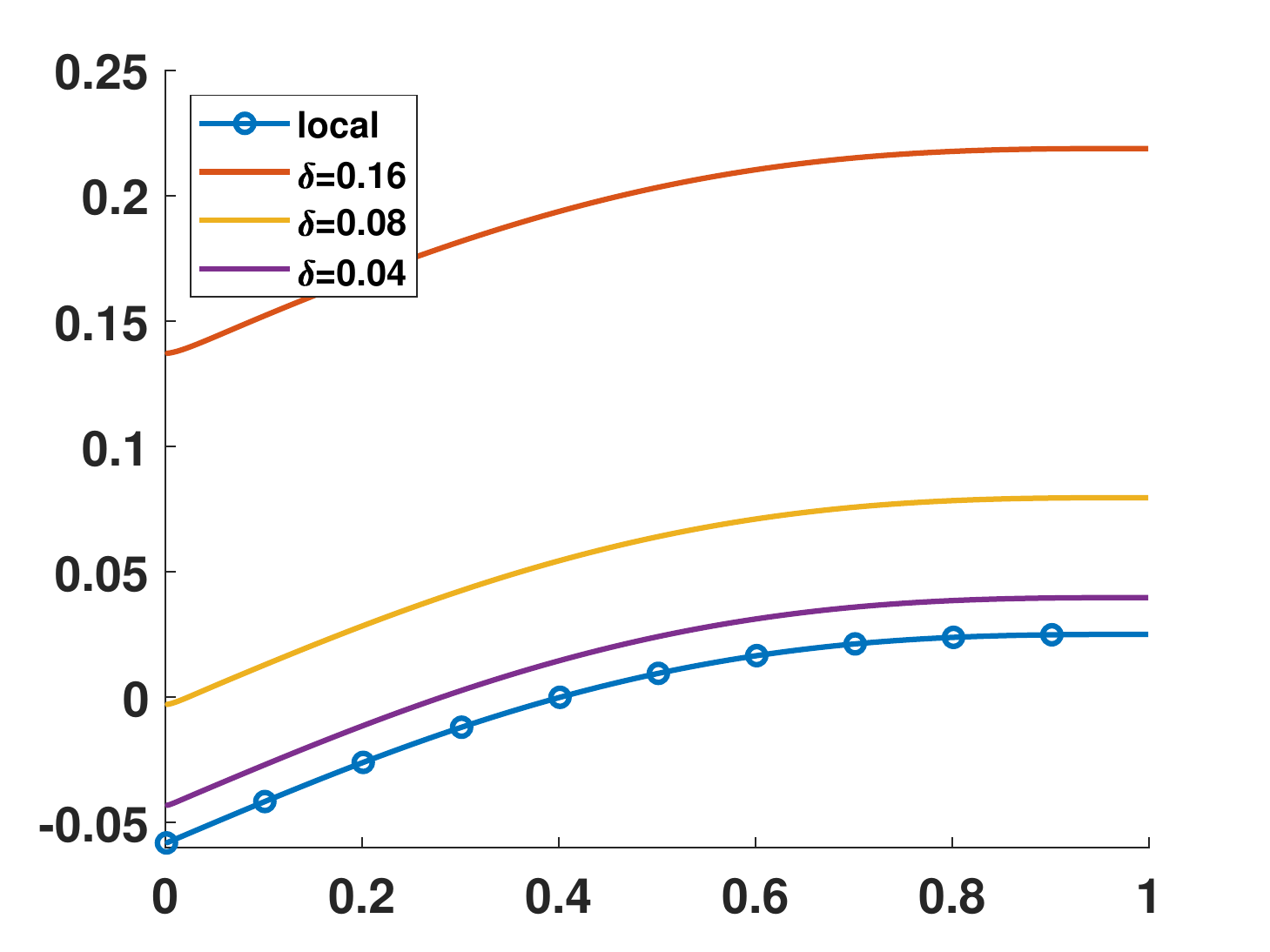}&
   \includegraphics[width=0.33\textwidth]{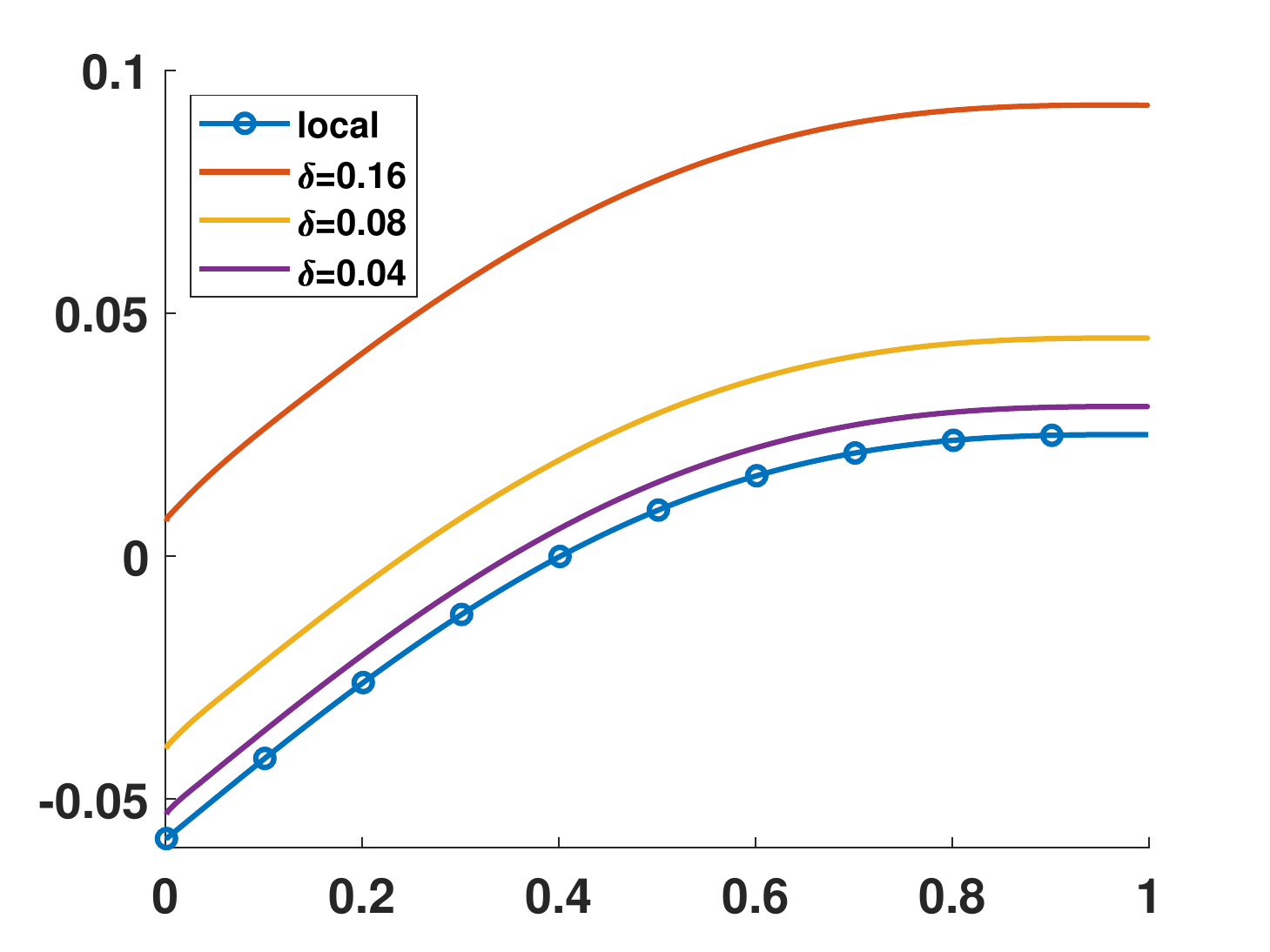} & \includegraphics[width=0.33\textwidth]{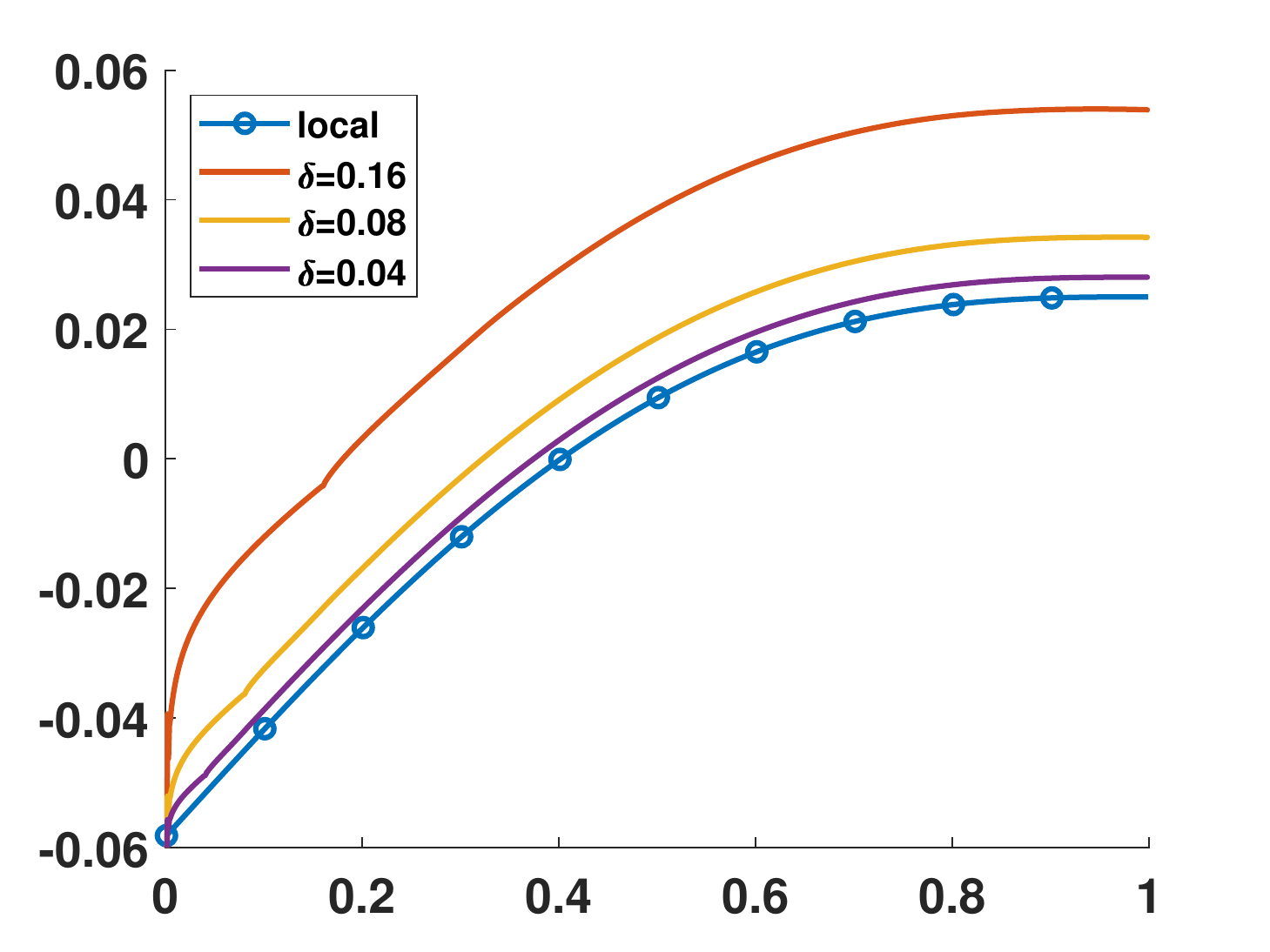}\\
      \includegraphics[width=0.33\textwidth]{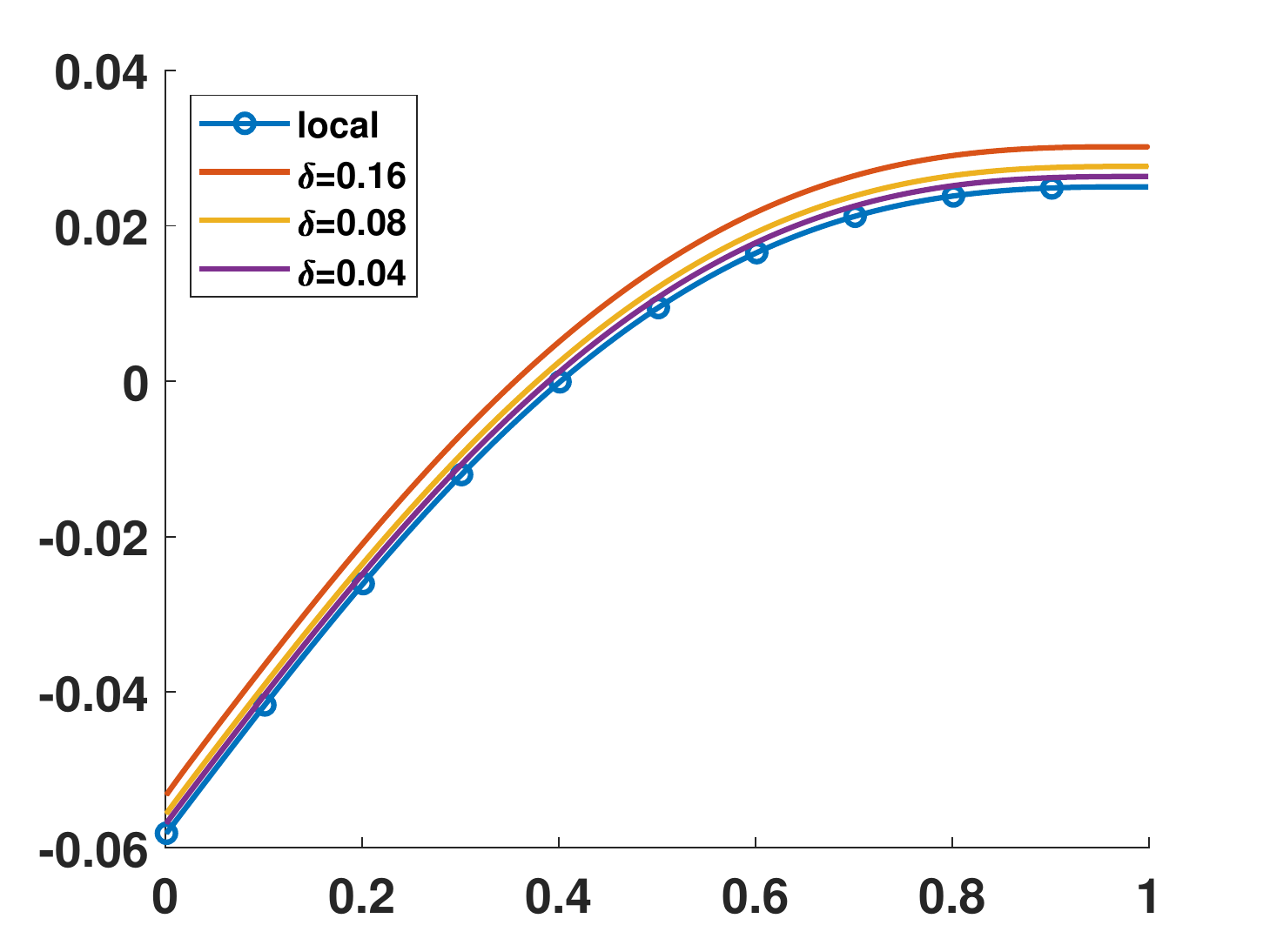}&
   \includegraphics[width=0.33\textwidth]{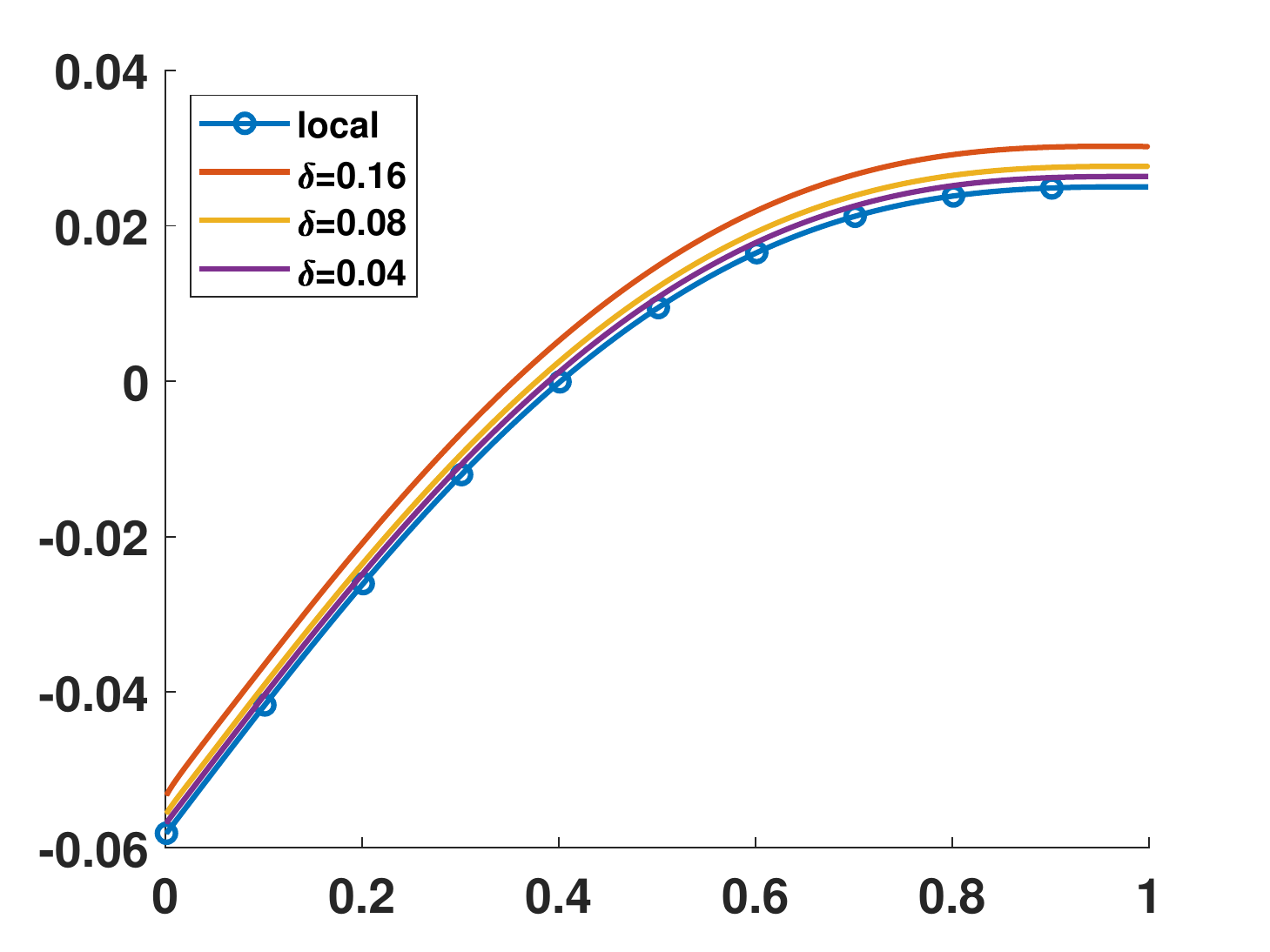} & \includegraphics[width=0.33\textwidth]{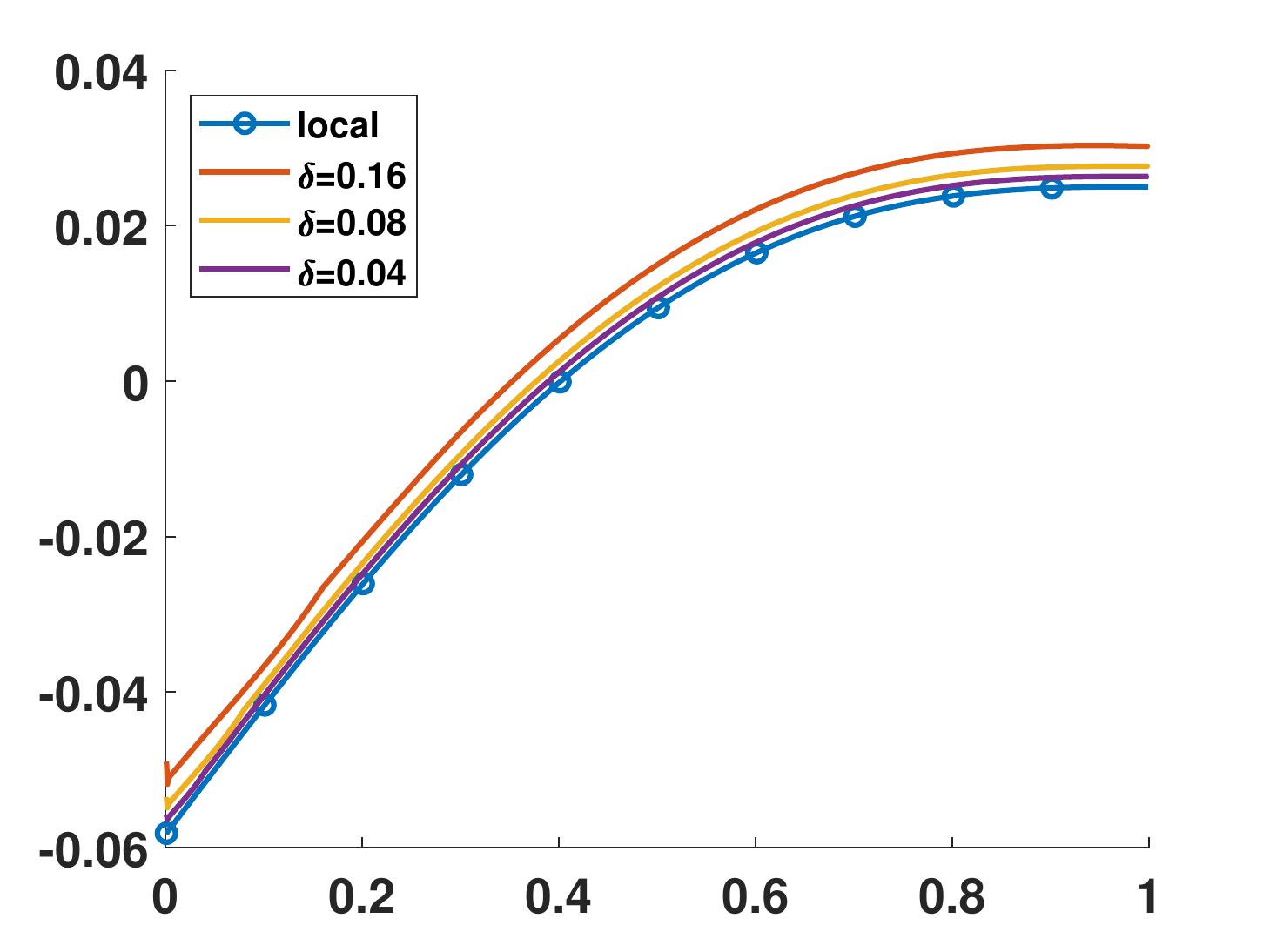}\\
       $s=0.75$  &  $s=0.25$ &   $s=-1$
   \end{tabular}
   \caption{Example 1. Solutions to the nonlocal diffusion problem with inhomogeneous Neumann type boundary conditions, with operators $\mcNd$ (upper) and $\mcN_{\delta,\omghd}$ (lower).}
   \label{fig:Neumann}
\end{figure}

\begin{figure}[hbt!]
\setlength{\tabcolsep}{0pt}
   \centering
   \begin{tabular}{ccc}
   \includegraphics[width=0.33\textwidth]{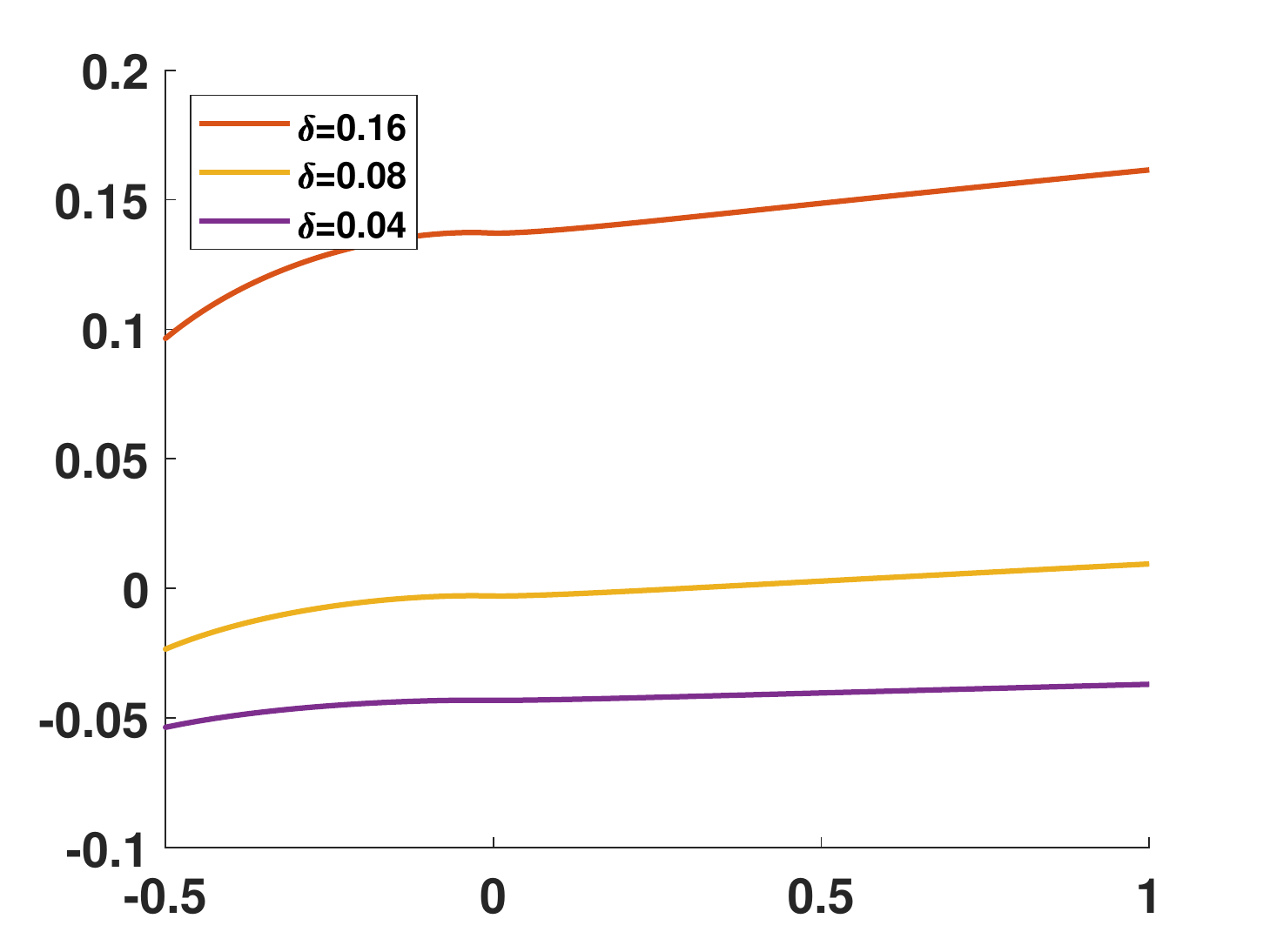}&
   \includegraphics[width=0.33\textwidth]{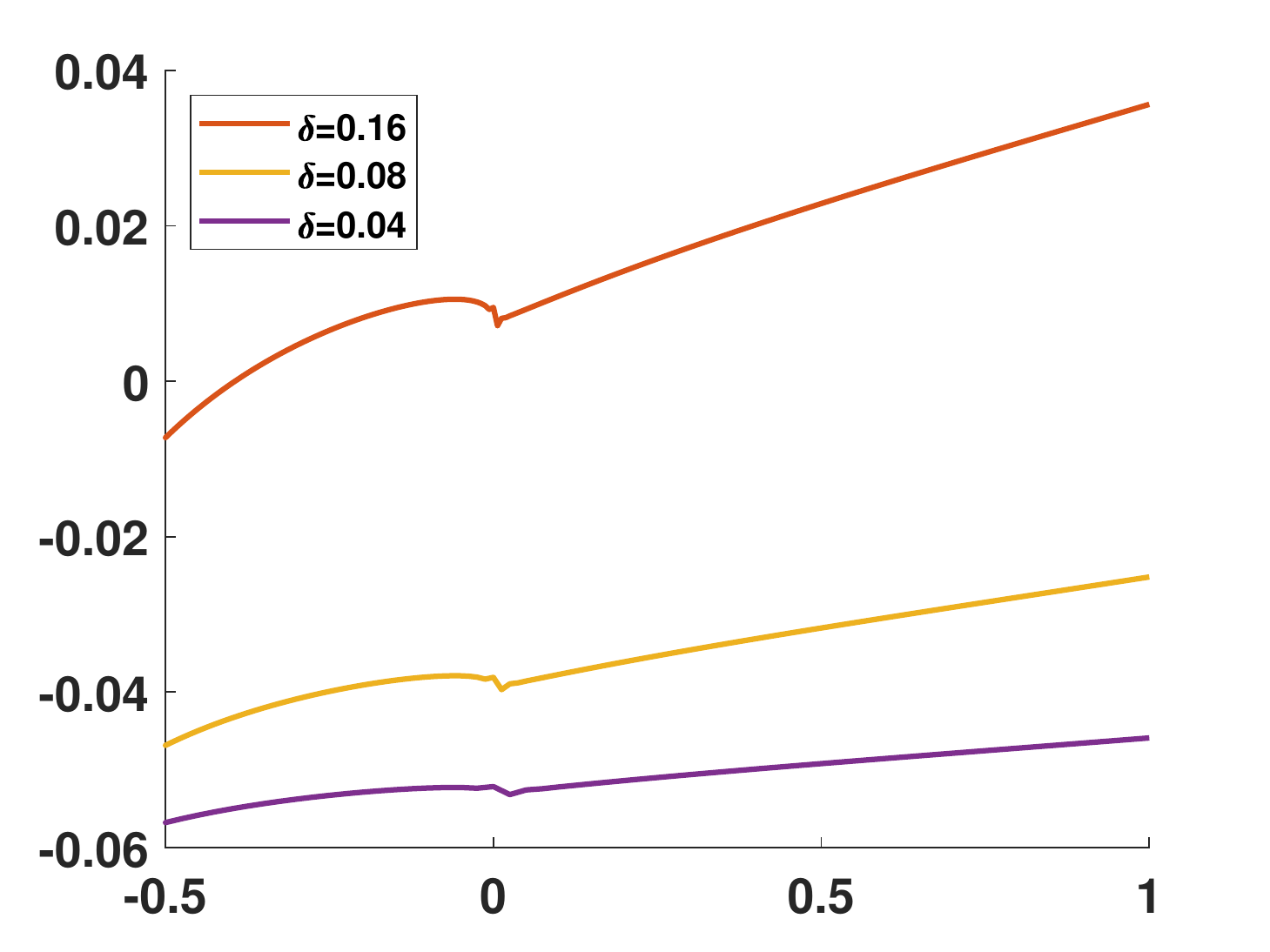} & \includegraphics[width=0.33\textwidth]{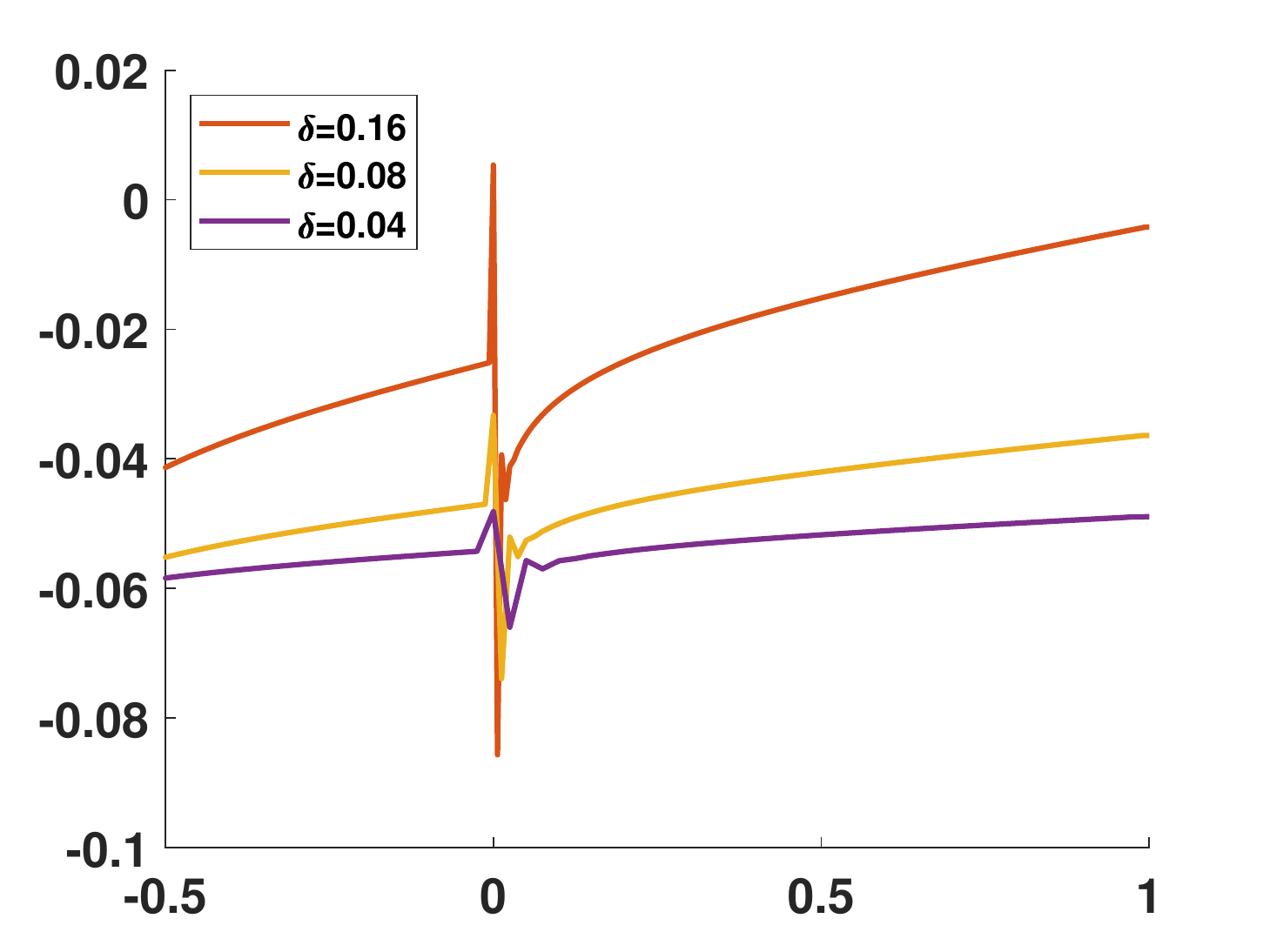}\\
      \includegraphics[width=0.33\textwidth]{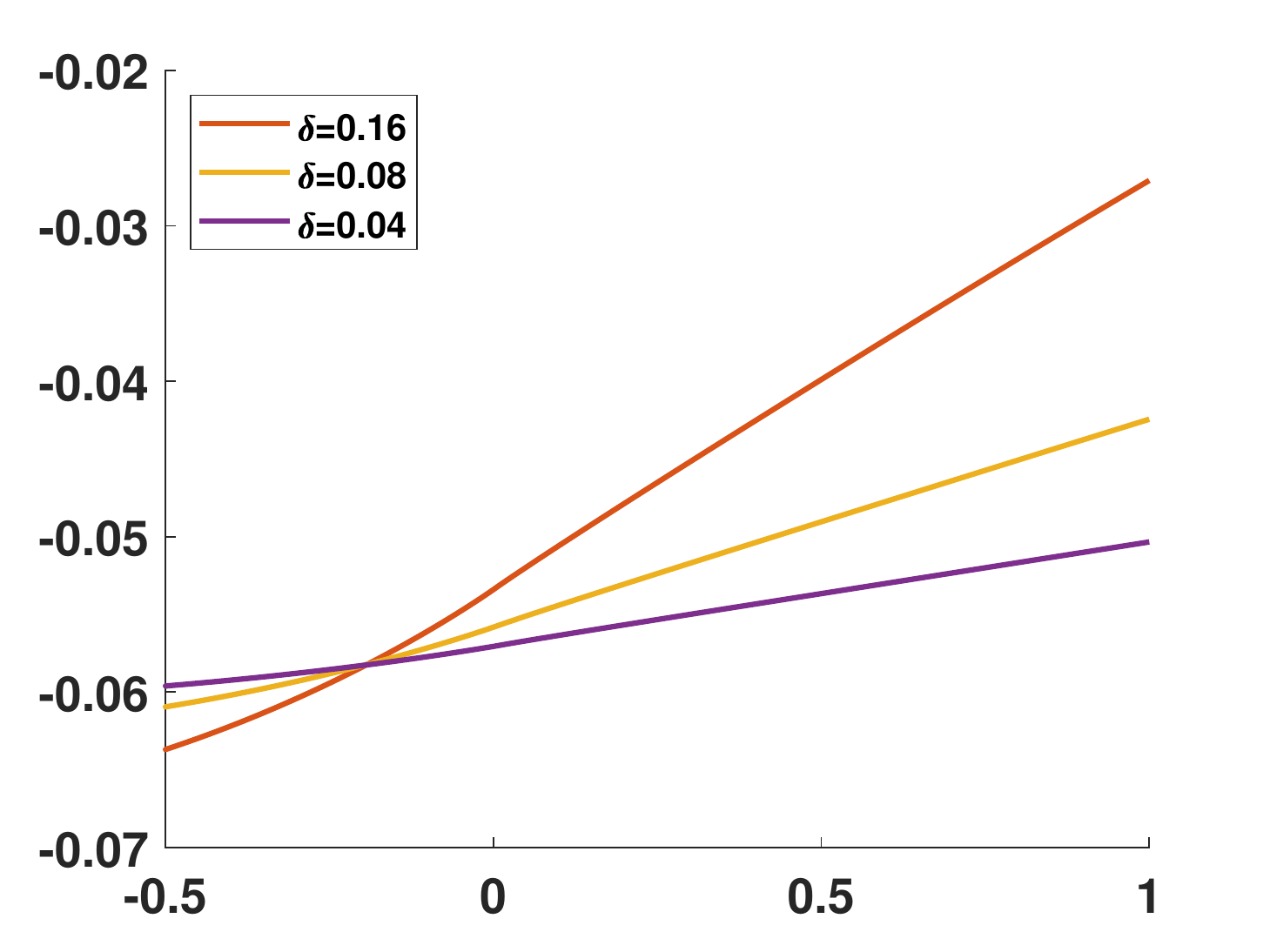}&
   \includegraphics[width=0.33\textwidth]{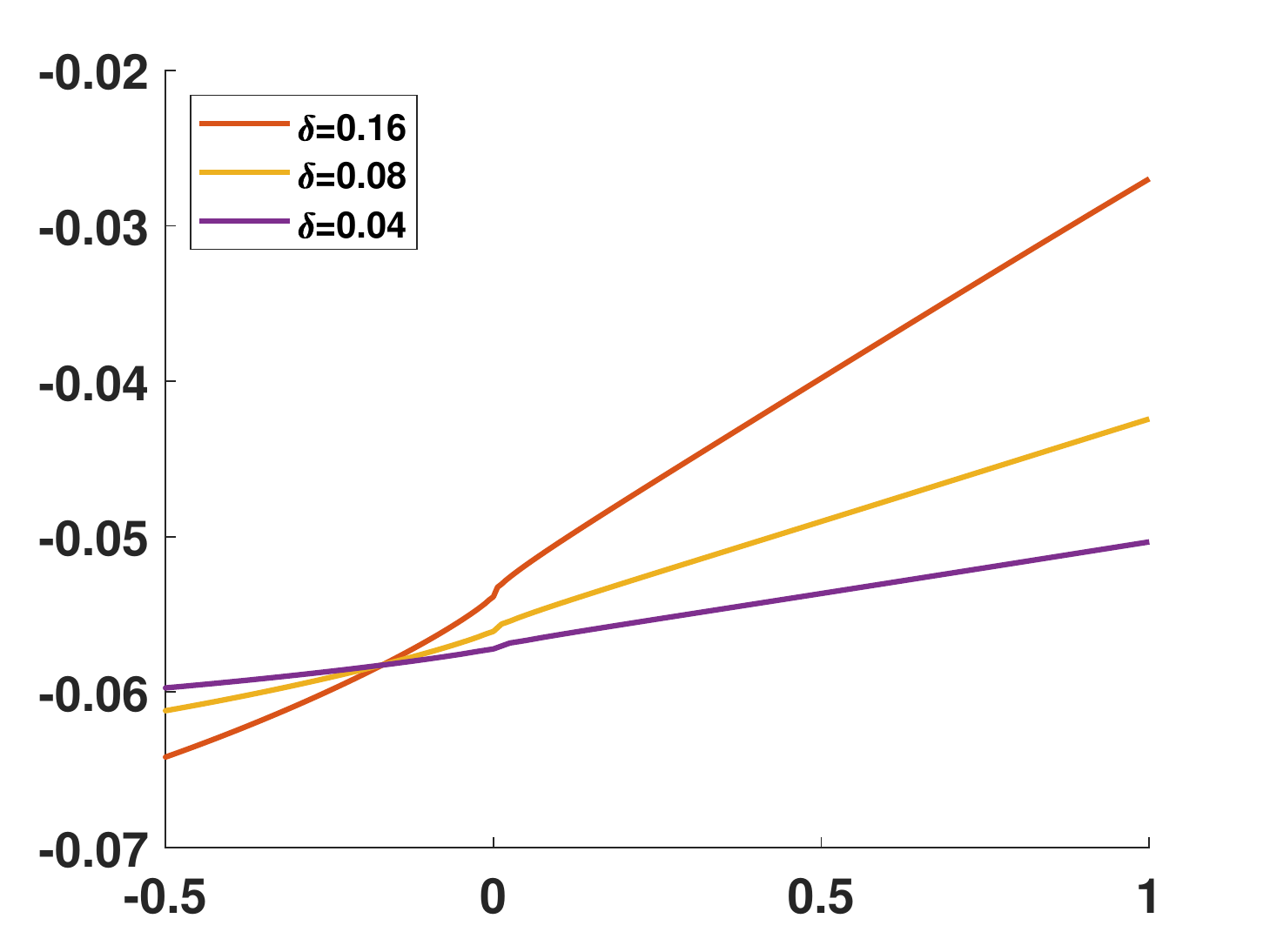} & \includegraphics[width=0.33\textwidth]{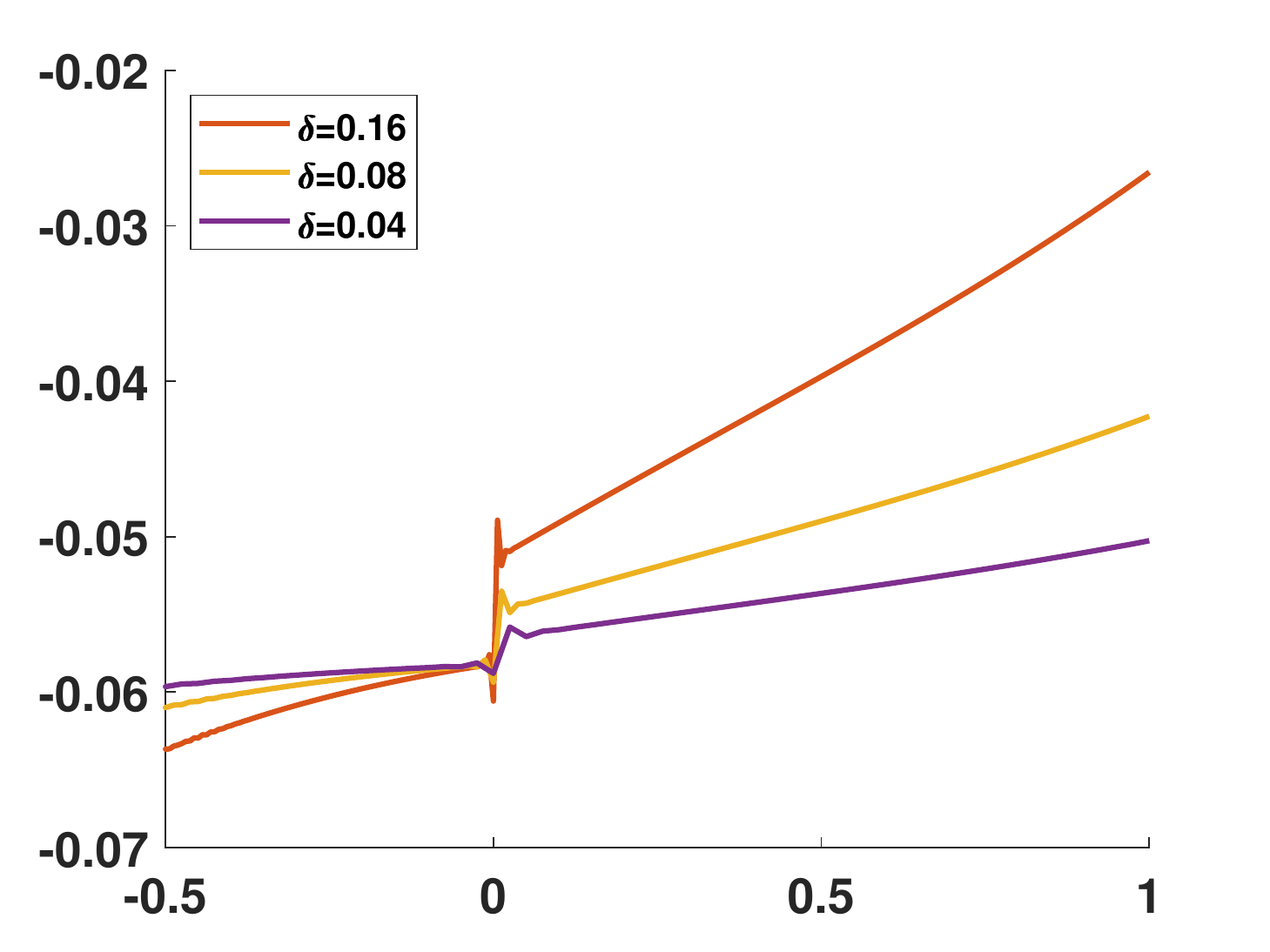}\\
       $s=0.75$  &  $s=0.25$ &   $s=-1$
   \end{tabular}
   \caption{Example 1. Profiles of $u_\delta(\delta y)$, $y\in(-1/2,1)$, with various  horizon parameters $\delta$ and fractional powers $s$, with operators $\mcNd$ (upper) and $\mcN_{\delta,\omghd}$ (lower).}
   \label{fig:Neumann-x0}
\end{figure}

\begin{table}[htbp]
   \caption{Example 1. $\|u_\delta - u_0\|_{L^2(\Omega)}$ with $s=0.75$, $0.25$ and $-1$.}\label{tab:N-rate}
   \vskip-5pt
\centering
\begin{tabular}{|c|c|ccccc|c|}
\hline
&$s\backslash\delta$& 0.08 & 0.04 & 0.02 & 0.01 &0.005  &Rate ($\delta$)\\
\hline
& $-1$ & 9.33E-3 & 3.02E-3  & 1.14E-3 & 4.35E-4  & 1.97E-4& 1.27\\
$\mathcal N_\delta$ 
& $0.25$ &2.15E-2  & 6.29E-3 & 1.92E-3 & 6.54E-4 & 2.48E-4& 1.47 \\
& $0.75$  & 5.97E-2 & 1.63E-2 & 4.44E-3 & 1.27E-3 &  3.95E-4& 1.75 \\
\hline
& $-1$ &  2.71E-3 & 1.33E-3  & 6.63E-4& 3.27E-4 & 1.60E-4  & 1.02\\
$\mcN_{\delta,\omghd}$ 
& $0.25$ &2.65E-3  & 1.32E-3 & 6.60E-4 & 3.26E-4 & 1.60E-4 & 1.01 \\
 &$0.75$  &2.62E-3 & 1.31E-3 & 6.57E-4 & 3.26E-4 & 1.60E-4& 1.01 \\
\hline
\end{tabular}
\end{table}

\vskip5pt
\textbf{Example 2. Local limit: mixed boundary conditions.}
In this example, we consider zero source term, i.e. $f(x)=0$,
and the following two types of boundary conditions:
\begin{equation*}
\begin{split}
      &
      \mathcal{N}_\delta u (x)
      = -\frac{1}{3\delta} ~~ \text{for}~~
     x\in (-\delta,-\delta/2),\quad
u(x) = 1~~ \text{for}~~ x\in[-\delta/2,0),\\
&u(x) = 1~~ \text{for}~~x\in(1,1+\delta), \qquad 
\textbf{Example 2(a)},
\end{split} 
\end{equation*}
and 
\begin{equation*}
\begin{split}
         &u(x) = 1 ~~ \text{for}~~
     x\in (-\delta,-\delta/2),\quad
\mathcal{N}_\delta u(x) = -\frac{1}{3\delta}~~ \text{for}~~ x\in[-\delta/2,0),\\
&u(x) = 1~~ \text{for}~~x\in(1,1+\delta), \qquad 
\textbf{Example 2(b)}.
\end{split} 
\end{equation*}
Noting that the main difference between Examples 
2(a) and 2(b) is the order of Neumann and Dirichlet boundary conditions in the interval $(-\delta,0)$.  This leads to the case that a part of the data (Neumann data for the former, and Dirichlet data for the latter respectively) is defined in the interval disconnected from $(0,1)$.
It is interesting to observe that, in Figure \ref{fig:m1}, both solutions converge to the same local limit, i.e., the solution to the steady diffusion problem: $-u''(x)= 0$ in $\Omega=(0,1)$ with  Dirichlet boundary conditions $u(0) = u(1) = 1$. 
{The solution profiles near $x=0$, i.e. $u_\delta(\delta y)$ with $y\in (-1, 2)$, are plotted in Figure  \ref{fig:m1-near0}. We observe the discontinuity at $x=-\delta/2$, which might be due to the incompatibility between the nonlocal Neumann and Dirichlet boundary conditions.}
In Table \ref{tab:Ex2-rate}, we present the difference between $u_\delta$ and $u_0$ in the $L^2$ sense. Our numerical experiments show that the convergence of Example 2 (b) is slower than that of Example 2 (a).
These observations deserve further theoretical analysis in the future.

\begin{figure}[hbt!]
\setlength{\tabcolsep}{0pt}
   \centering
   \begin{tabular}{ccc}
   \includegraphics[width=0.33\textwidth]{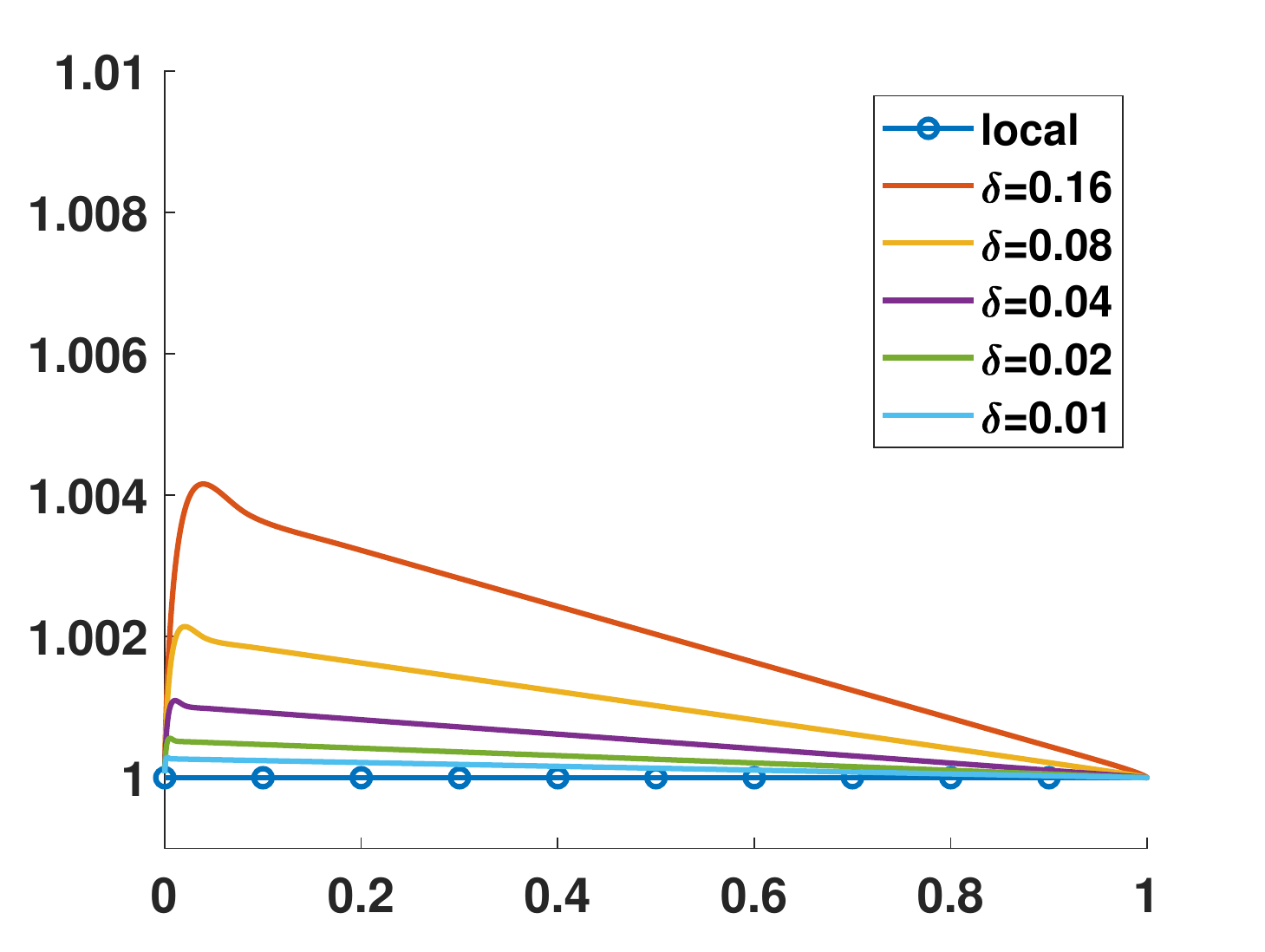}&
   \includegraphics[width=0.33\textwidth]{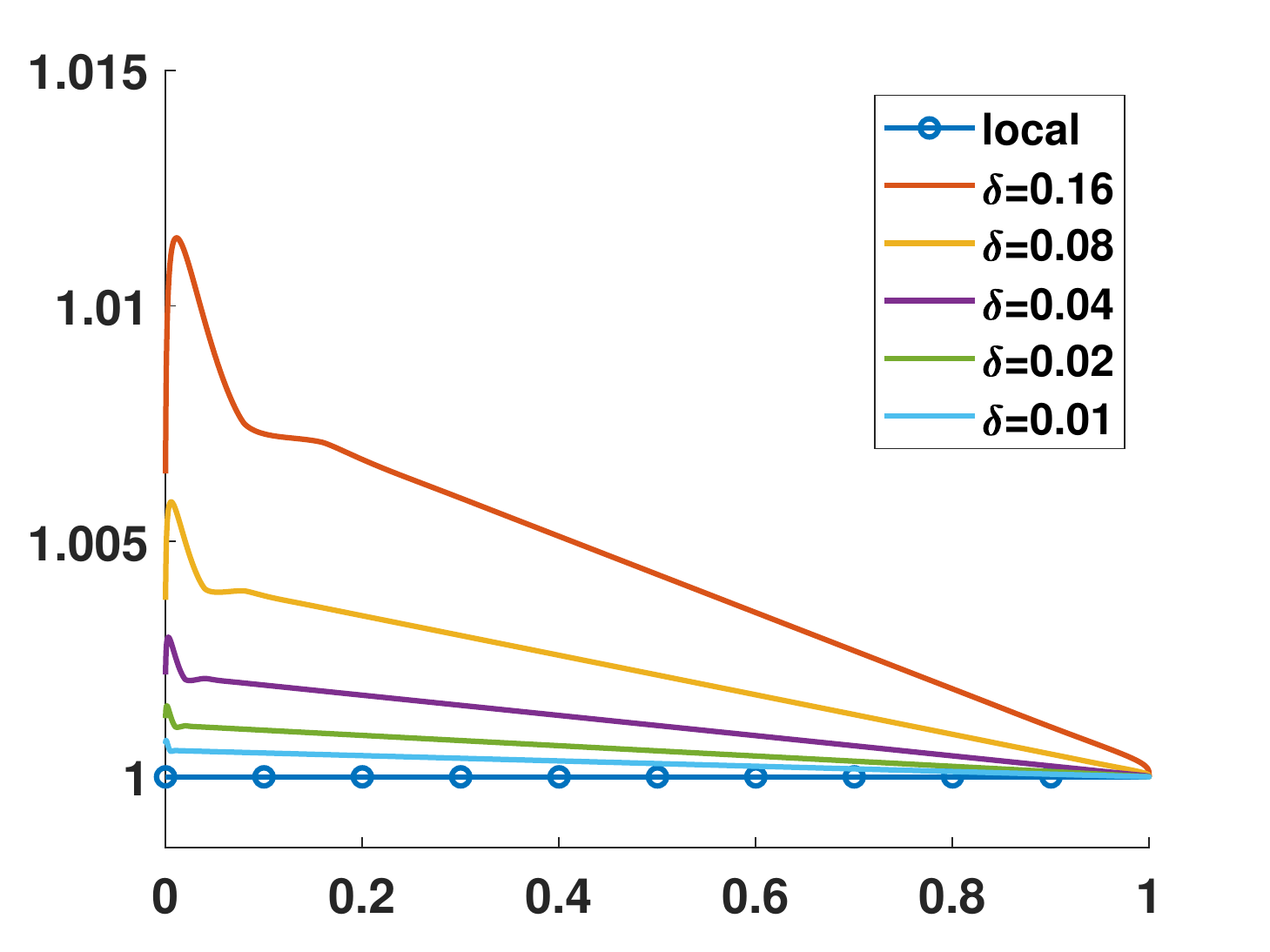} & \includegraphics[width=0.33\textwidth]{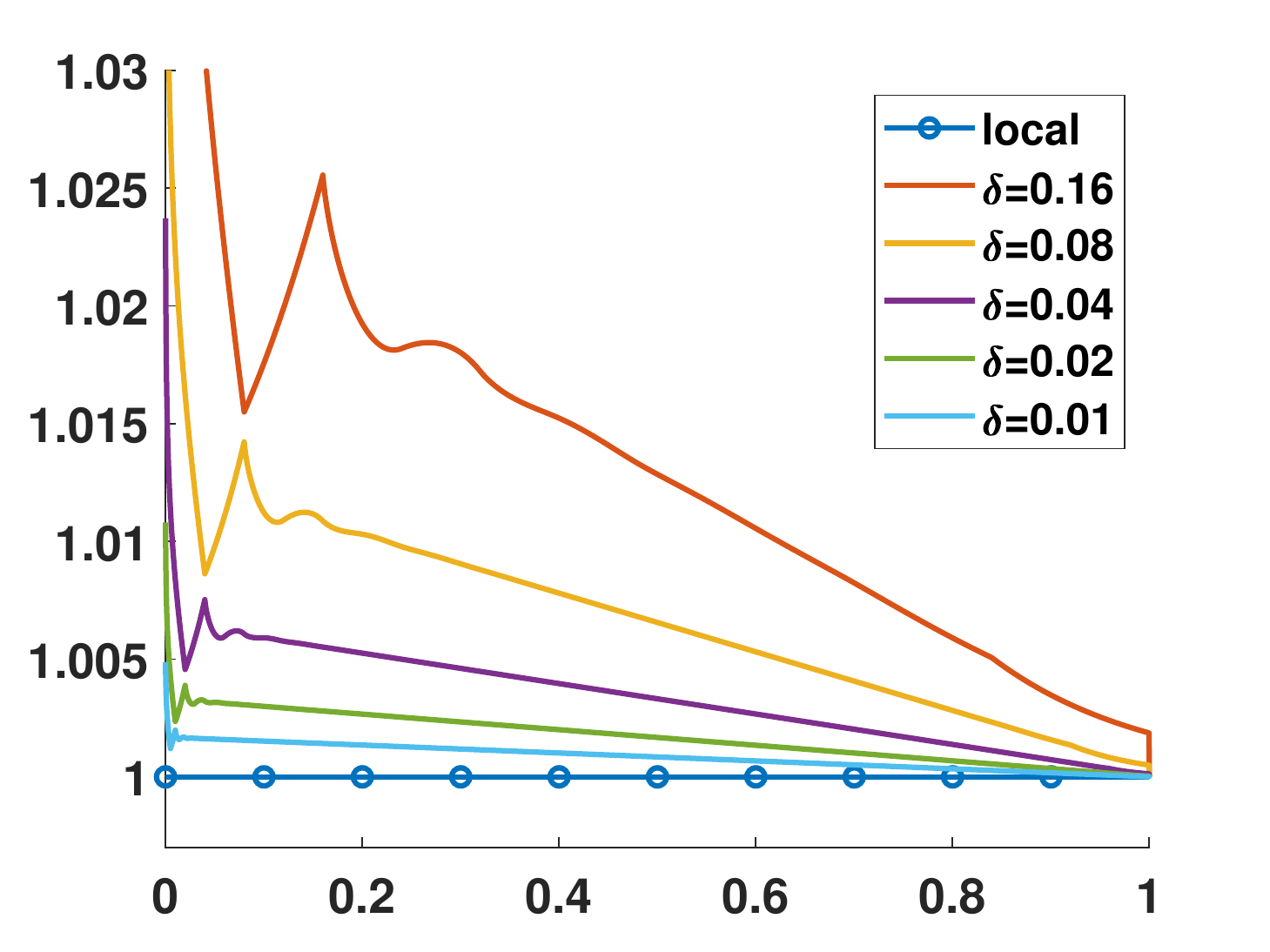}\\ 
   \includegraphics[width=0.33\textwidth]{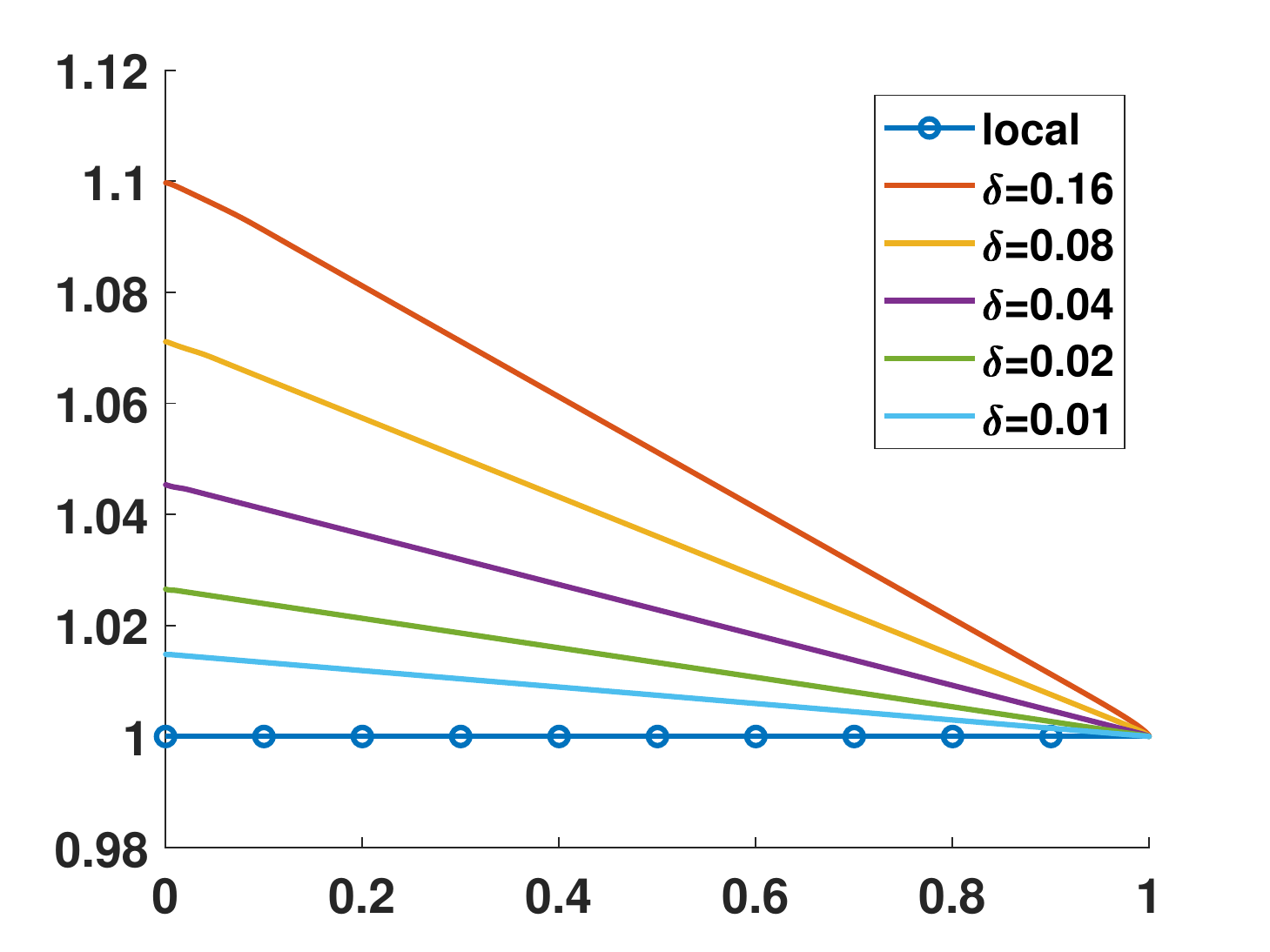}&
   \includegraphics[width=0.33\textwidth]{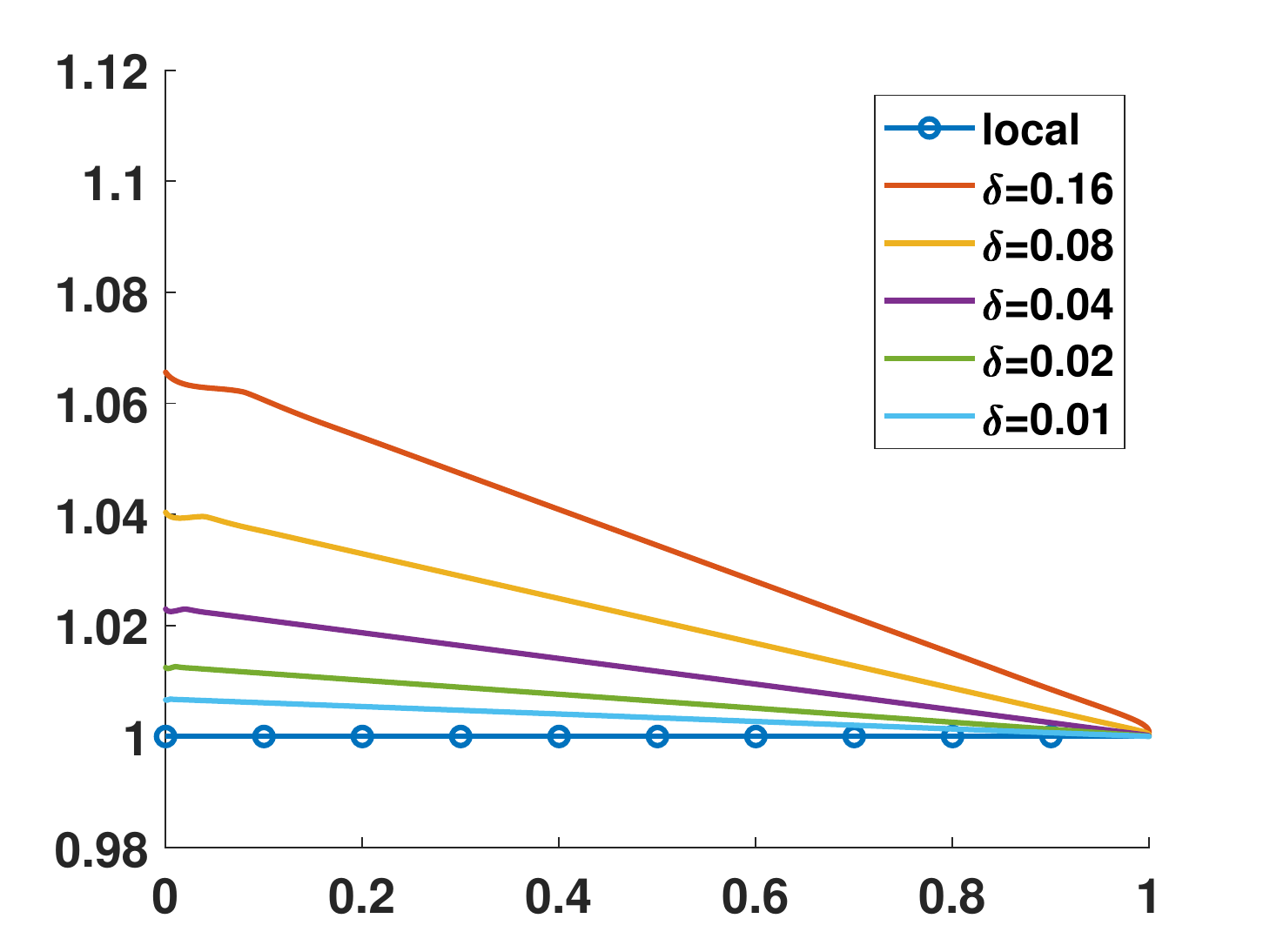} & \includegraphics[width=0.33\textwidth]{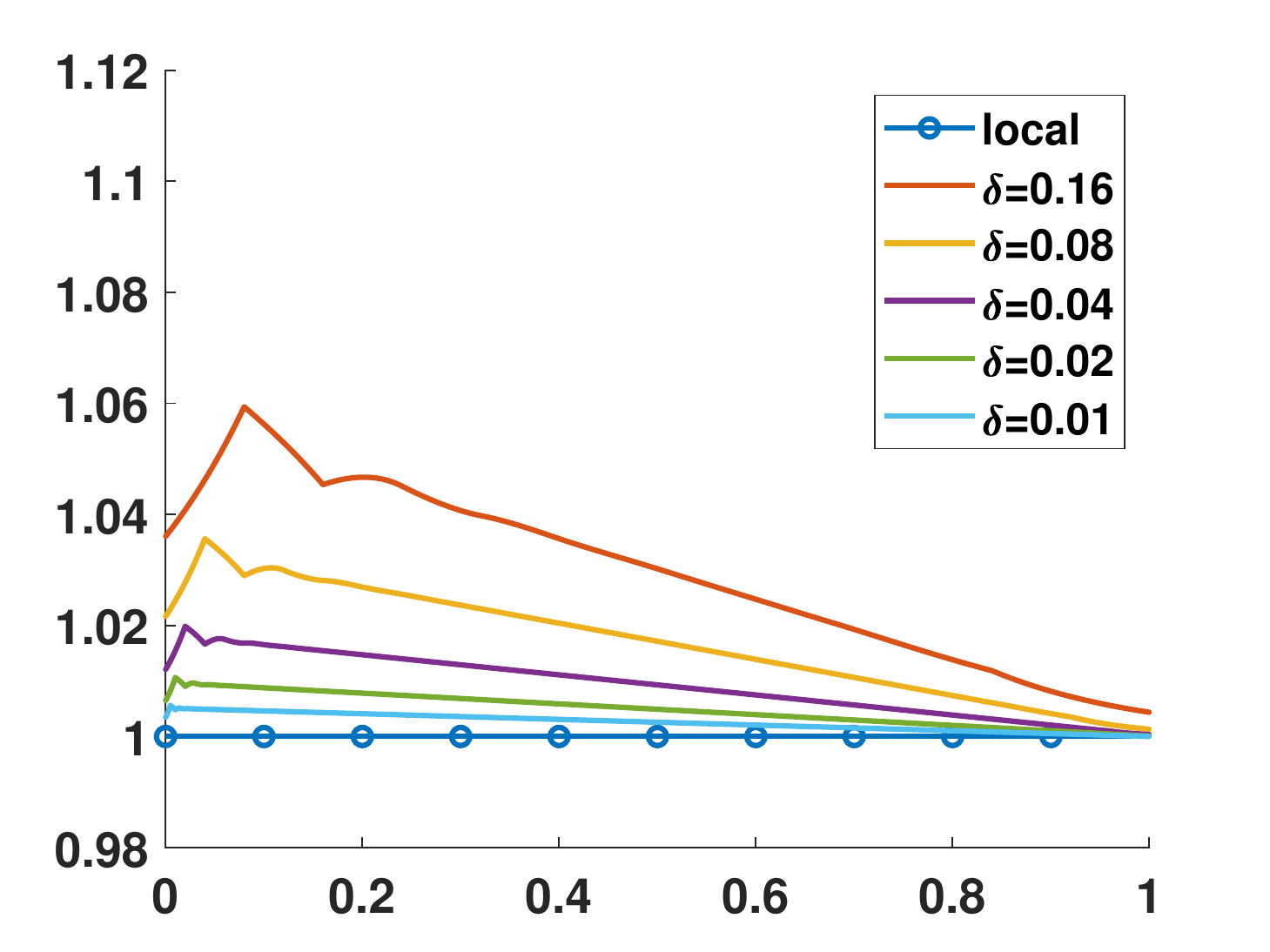}\\ 
        $s=0.75$  &  $s=0.25$ &  $s=-1$
   \end{tabular}
   \caption{Example 2 (a) (upper) and  2 (b) (lower). Solutions to the nonlocal diffusion problem with various horizon parameters $\delta$ and fractional powers $s$. }
   \label{fig:m1}
\end{figure}

\begin{figure}[hbt!]
\setlength{\tabcolsep}{0pt}
   \centering
   \begin{tabular}{ccc}
   \includegraphics[width=0.33\textwidth]{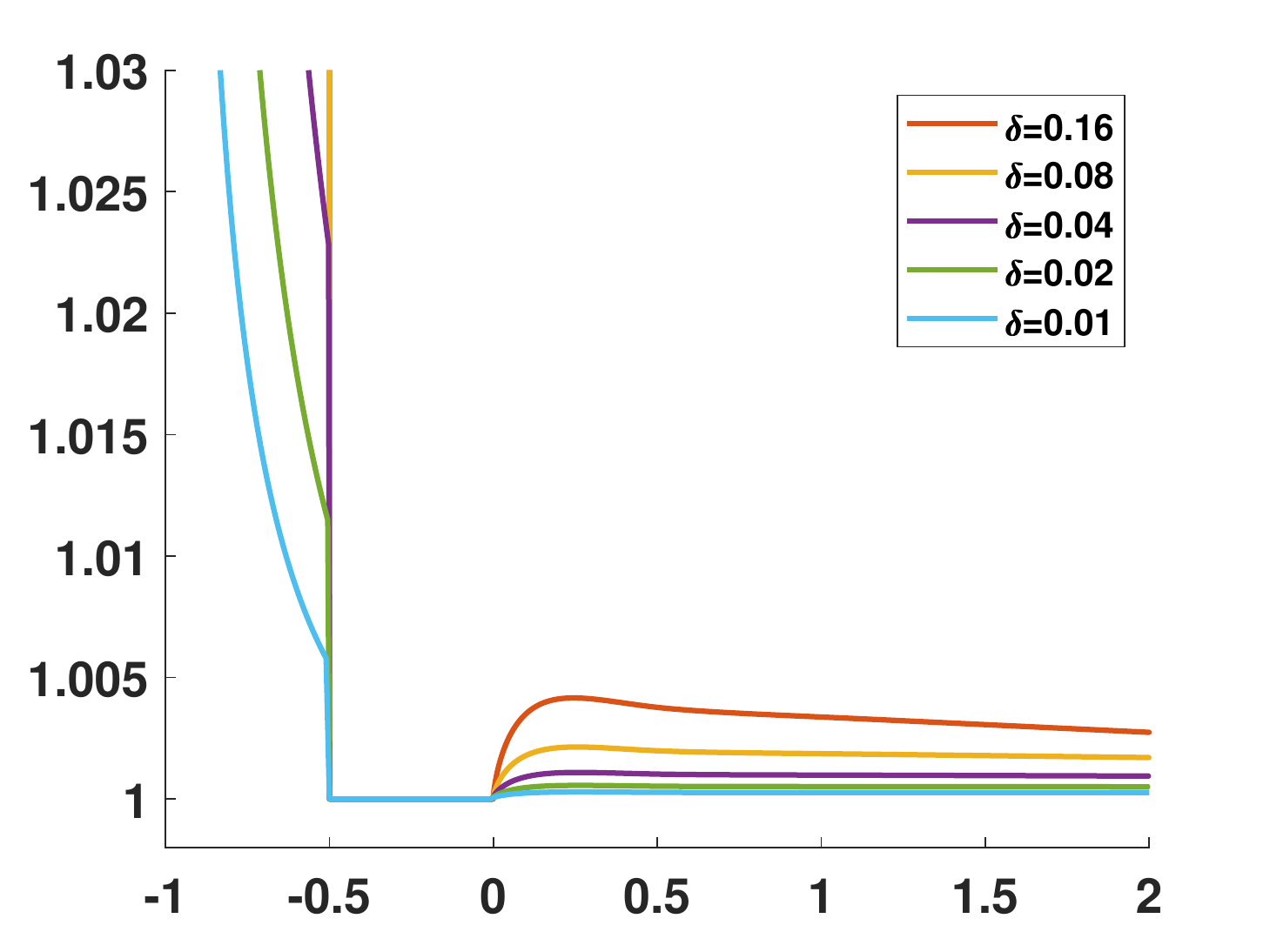}&
   \includegraphics[width=0.33\textwidth]{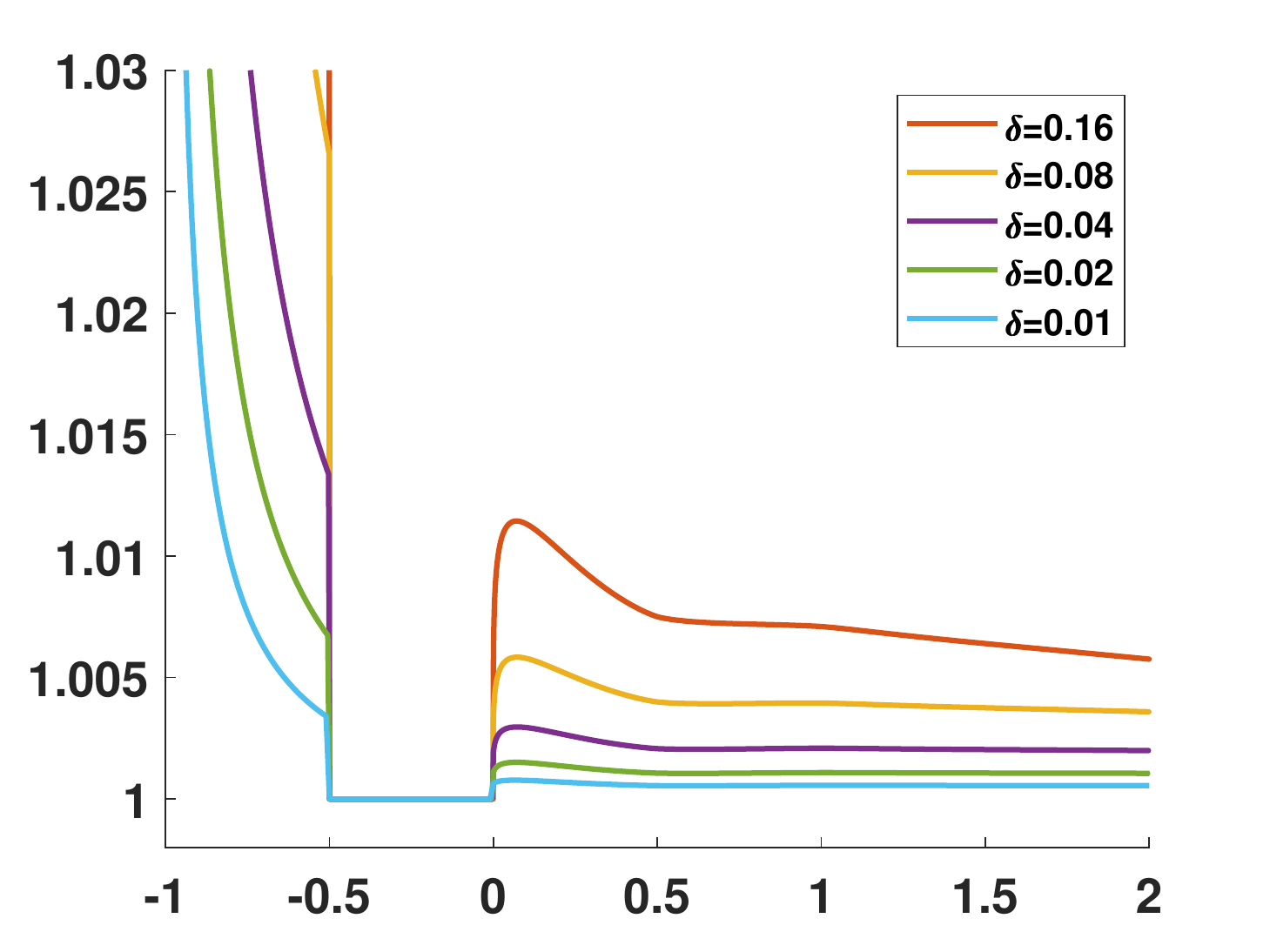} & \includegraphics[width=0.33\textwidth]{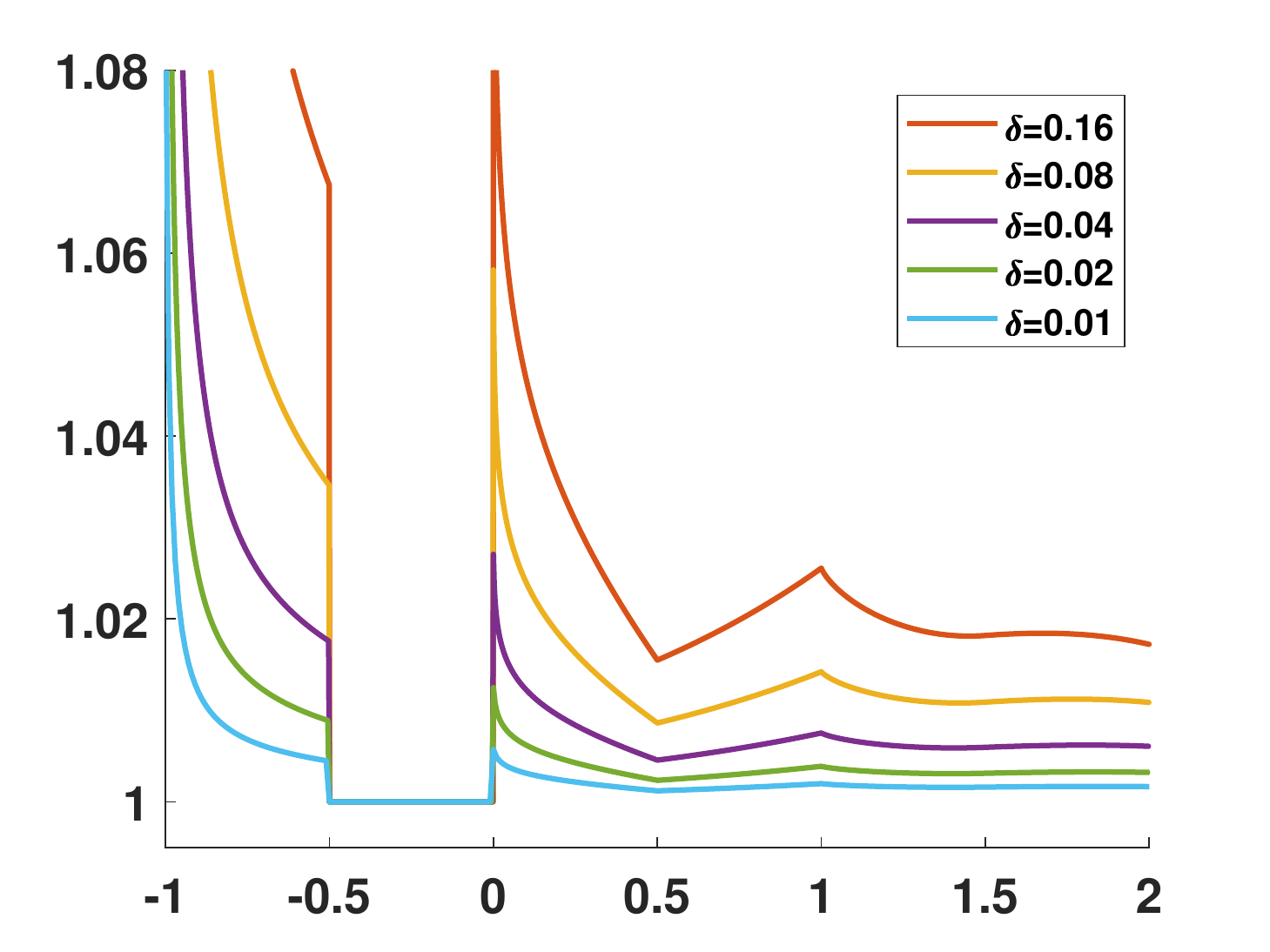}\\ 
   \includegraphics[width=0.33\textwidth]{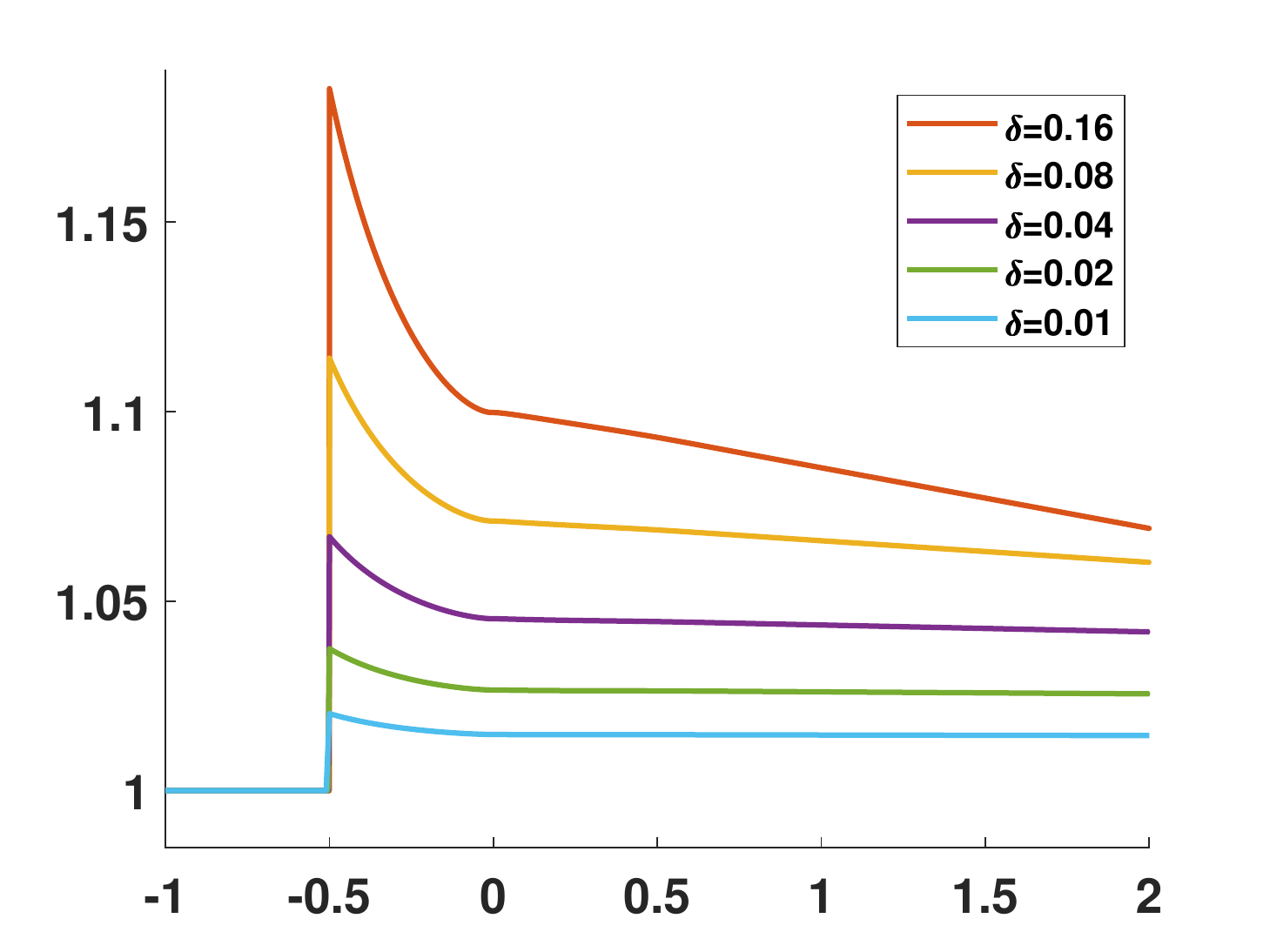}&
   \includegraphics[width=0.33\textwidth]{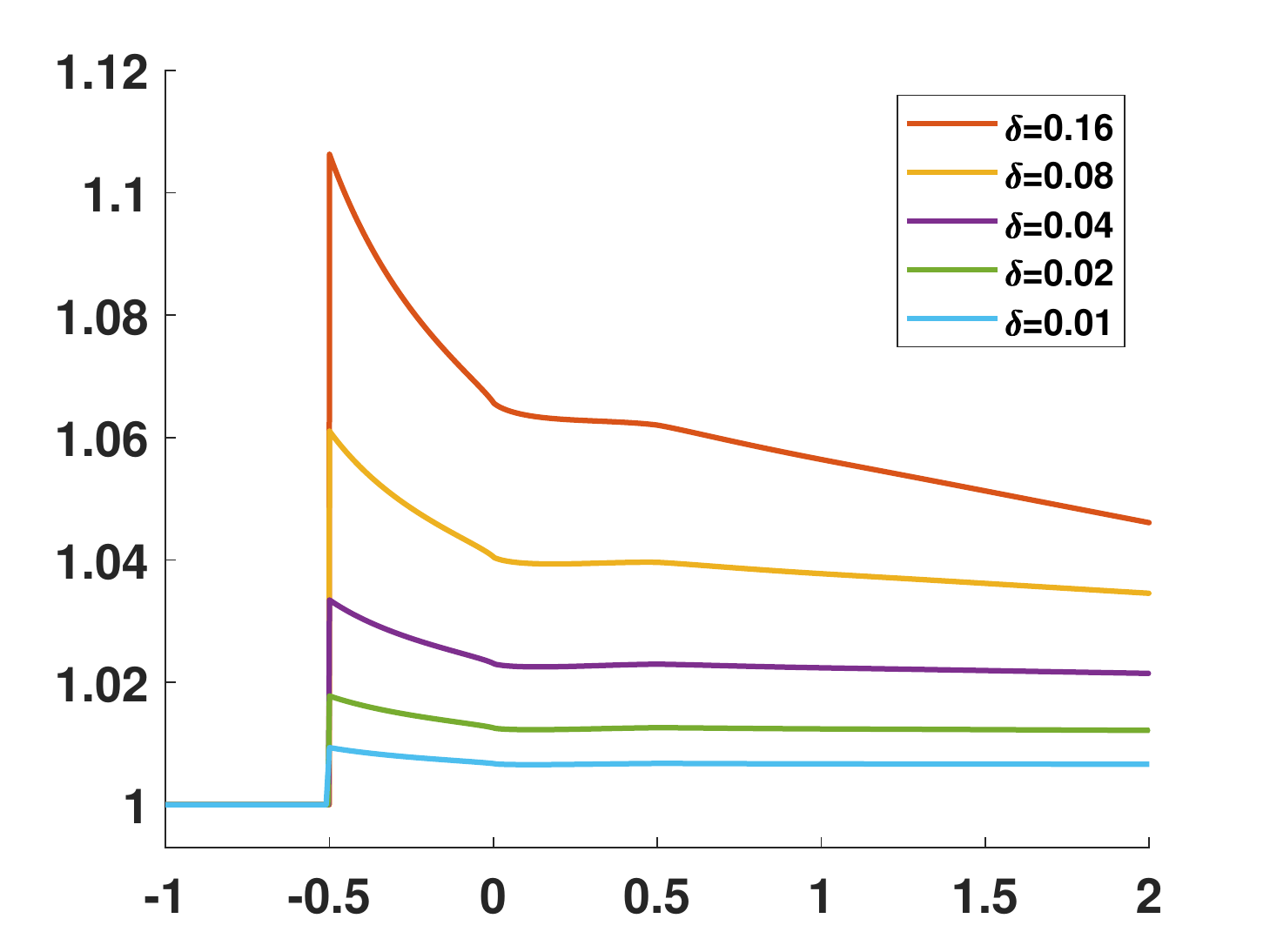} & \includegraphics[width=0.33\textwidth]{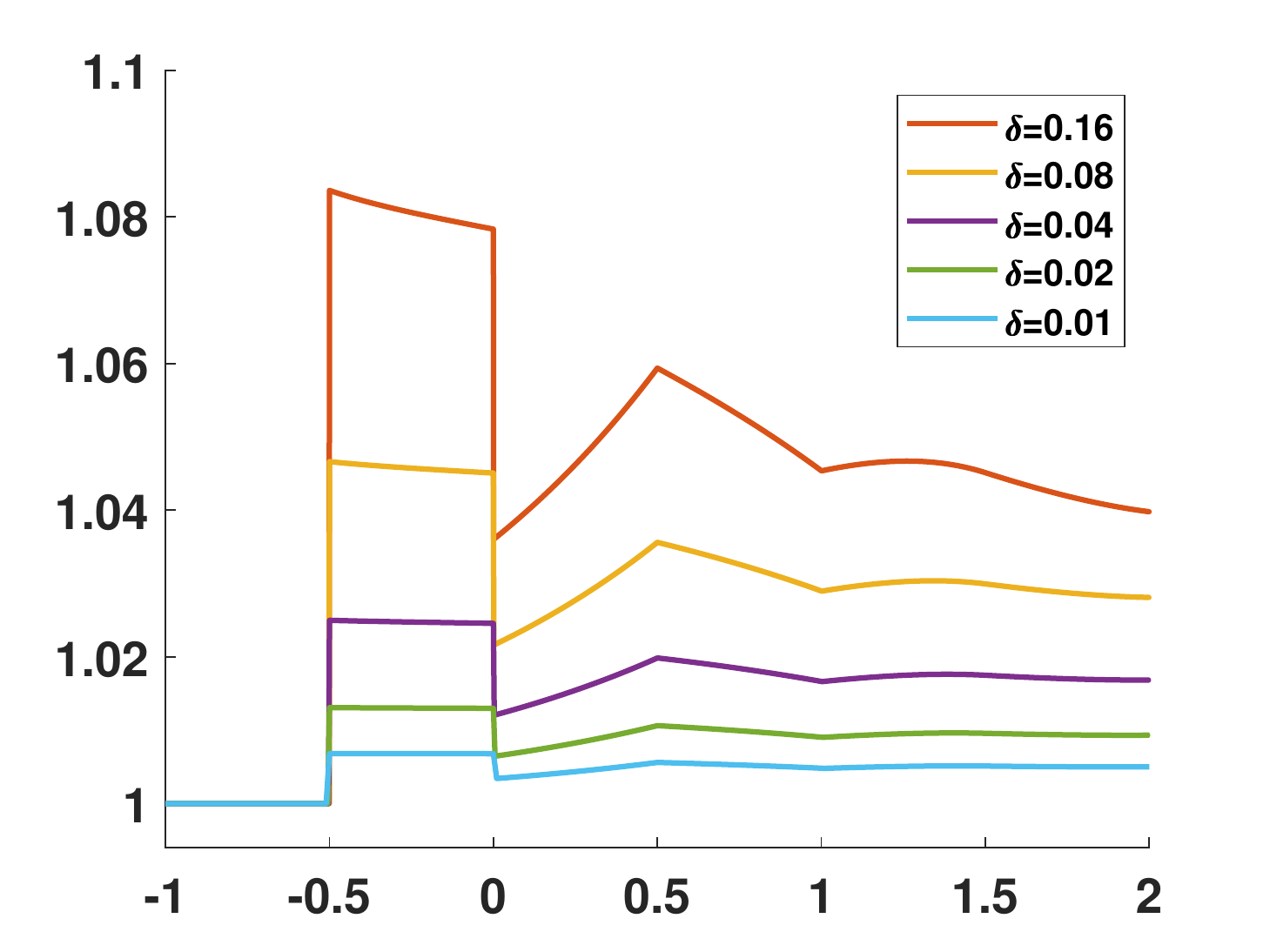}\\ 
        $s=0.75$  &  $s=0.25$ &  $s=-1$
  \end{tabular}
\caption{Example 2(a)(upper) and 2(b)(lower). Profiles of $u_\delta(\delta y)$, $y\in(-1,2)$, with various  horizon parameters $\delta$ and fractional powers $s$.}
\label{fig:m1-near0}
\end{figure}

\begin{table}[hbt!]
\centering
   \caption{Example 2 (a) and (b). $\|u_\delta - u_0\|_{L^2(\Omega)}$ with $s=0.75$, $0.25$ and $-1$.}\label{tab:Ex2-rate}
   \vskip-5pt
\begin{tabular}{|c|c|ccccc|c|}
\hline
&$s\backslash\delta$& 0.16& 0.08 & 0.04 & 0.02 & 0.01 &Rate ($\delta$)\\
\hline
&$ -1$ & 1.67E-2 & 8.04E-3  & 3.95E-3 & 1.96E-3& 9.84E-4 & 1.02\\
(a)&$  0.25$ &5.11E-3  &2.53E-3 &1.27E-3& 6.40E-4 & 3.27E-5 & 0.99 \\
&$ 0.75$  &2.32E-3 & 1.17E-3 & 5.92E-4 & 3.01E-4 &1.55E-4 & 0.98 \\
\hline
&$  -1$ & 3.33E-2 & 1.93E-2  & 1.06E-2 & 5.63E-3& 2.96E-3   & 0.87\\
(b)&$ 0.25$ &3.90E-2 & 2.38E-2  & 1.35E-2 & 7.31E-3 & 3.90E-3 & 0.83 \\
&$  0.75$  &5.87E-2 & 4.15E-2  & 2.63E-2 & 1.54E-2 & 8.56E-3 & 0.69 \\
\hline
\end{tabular}
\end{table}

\vskip5pt

\textbf{Example 3. Fractional limit.}
In this example, we shall numerically check the limit of nonlocal diffusion models as $\delta\rightarrow \infty$. To this end, we
consider the boundary value problem of steady fractional diffusion model
\begin{equation}\label{eq:fracPoisson-num}
\begin{cases}
-\mcLis u_\infty(x) + u_\infty = f(x), &\quad\forall\, x\in\omg{,}
\\
\mathcal{N}_\infty^s u_\infty(x)=g(x),&\quad\forall\, x\in
\omgc ,
\end{cases}
\end{equation}
where the fractional operator $\mcLis$ and the Neumann operator $\mathcal{N}_\infty^s$ are defined in \eqref{intfl} and \eqref{eq:mcns}, respectively. 
Since the closed form of the fractional model is unavailable, we assume that the solution is a Gaussian function: 
\begin{equation}\label{uinf}
    u_\infty(x) = \exp\big(-(x-\gfrac12)^2\big)\qquad \forall ~~x\in \mathbb{R}.
\end{equation}
Then it is easy to compute the source term $f$ and the Neumann boundary data $g$ numerically, since $u_\infty(x)$ decays double exponentially as $|x|\rightarrow \infty$.
In our computation, we evaluate the numerical solution of the following
boundary value problem of the steady nonlocal diffusion problem with a finite horizon:
\begin{equation}\label{eq:nonlocal-num}
\begin{cases}
-\mathcal{L}_\delta u_\delta(x) + u_\delta= f(x), &\quad\forall\, x\in\omg{,}
\\
\mathcal{N}_\delta u_\delta(x)=g(x),&\quad\forall\, x\in
\Omega_{\mathcal{I}_\delta},
\end{cases}
\end{equation}
Note that, by adding lower order terms $u_\infty$ and $u_\delta$ in \eqref{eq:fracPoisson-num} and \eqref{eq:nonlocal-num} respectively, we will not need to be concerned with data incompatibility for the corresponding fractional and nonlocal problems.

In Figure \ref{fig:frac}, we plot the numerical solutions $u_\delta$ with various $\delta$ and $s$. The numerical results clearly show the convergence of $u_\delta$ to $u_\infty$ as $\delta\rightarrow \infty$. Besides, for smaller $s$, we observe a slower convergence, which might be due to the slower decay of the kernel function in the case of smaller $s$. 

Similar to Example 1, we also consider the boundary value problem with another type of Neumann boundary condition:
\begin{equation}\label{eq:fracPoisson-num2}
\begin{cases}
-\mcLis u_\infty(x) + u_\infty = f(x), &\quad\forall\, x\in\omg{,}
\\
\widetilde{\mathcal{N}}_\infty^s u_\infty(x)=\widetilde g(x),&\quad\forall\, x\in
\omgc ,
\end{cases}
\end{equation}
where the fractional Neumann operator is defined by
\begin{equation}\label{eqn:Ns2}
  \widetilde{\mathcal{N}}_\infty^s u(x)= - C_{s,\infty}\int_{\mathbb{R}} \frac{u(y)-u(x)}{|x-y|^{1+2s}}   \,dy,  \quad\quad\forall\,  x\in{\omgc}.
\end{equation}
Let the solution to \eqref{eq:fracPoisson-num2} be of the form \eqref{uinf}. Then we are able to compute the new Neumann boundary data 
$\widetilde g(x) =\widetilde{\mathcal{N}}_\infty^s u_\infty(x)$.
The corresponding nonlocal model reads
\begin{equation}\label{eq:nonlocal-num2}
\begin{cases}
-\mathcal{L}_\delta u_\delta(x) + u_\delta= f(x), &\quad\forall\, x\in\omg{,}
\\
\mcN_{\delta,\omghd} 
u_\delta(x)=\widetilde g(x),&\quad\forall\, x\in
\Omega_{\mathcal{I}_\delta},
\end{cases}
\end{equation}
Again, our numerical results indicate that the functions $u_\delta$ converge to $u_\infty$ as $\delta \rightarrow \infty$, and the convergence rate deteriorates as $s\rightarrow 0$, see Figure \ref{fig:frac}.
For both cases, we observe that the convergence rate is around $O(\delta^{-2s})$(see Table \ref{tab:Ex3-rate}), which still awaits theoretical justification.

\begin{figure}[hbt!]
\setlength{\tabcolsep}{0pt}
   \centering
   \begin{tabular}{ccc}
   \includegraphics[width=0.33\textwidth]{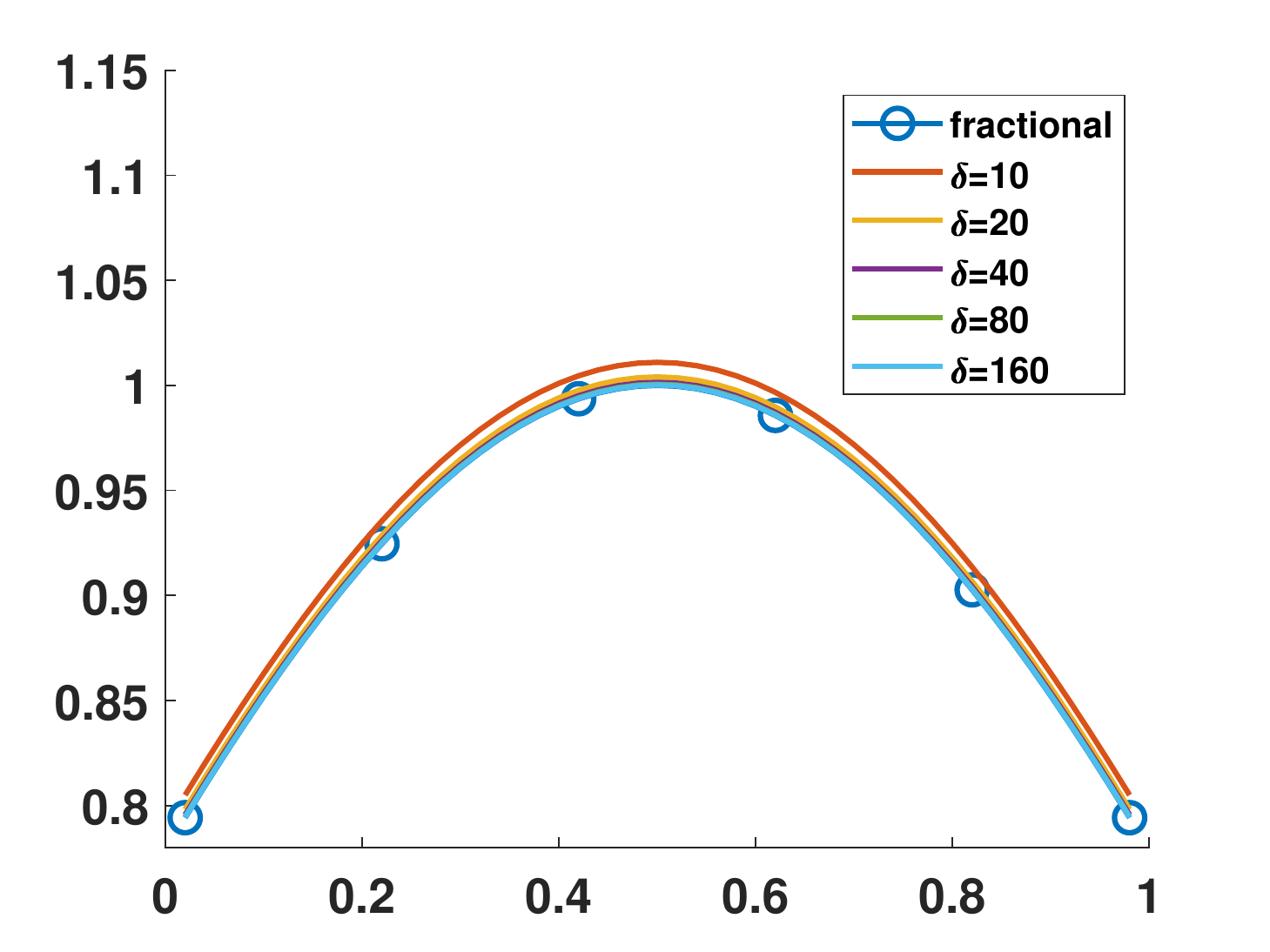}&
   \includegraphics[width=0.33\textwidth]{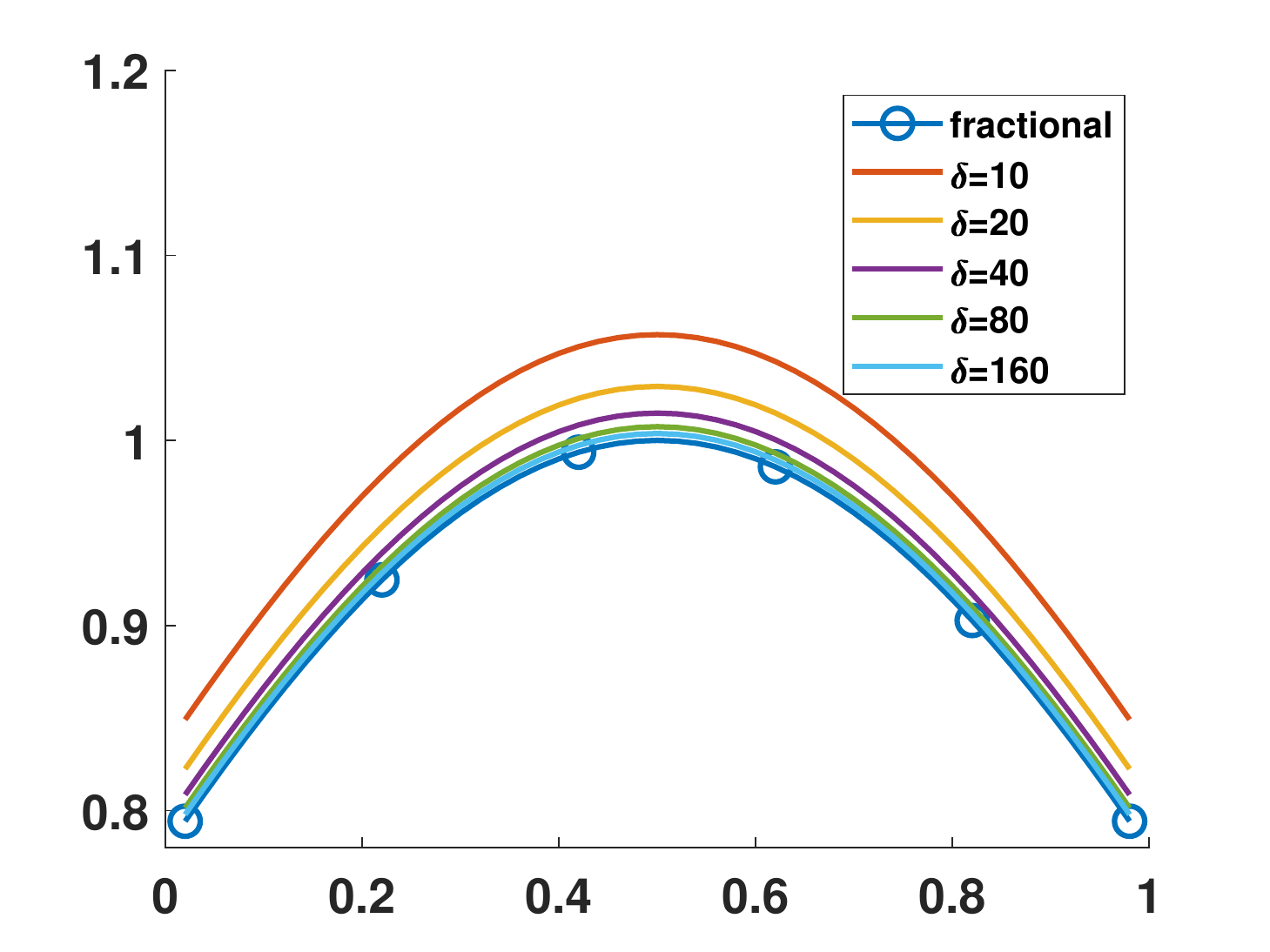} & \includegraphics[width=0.33\textwidth]{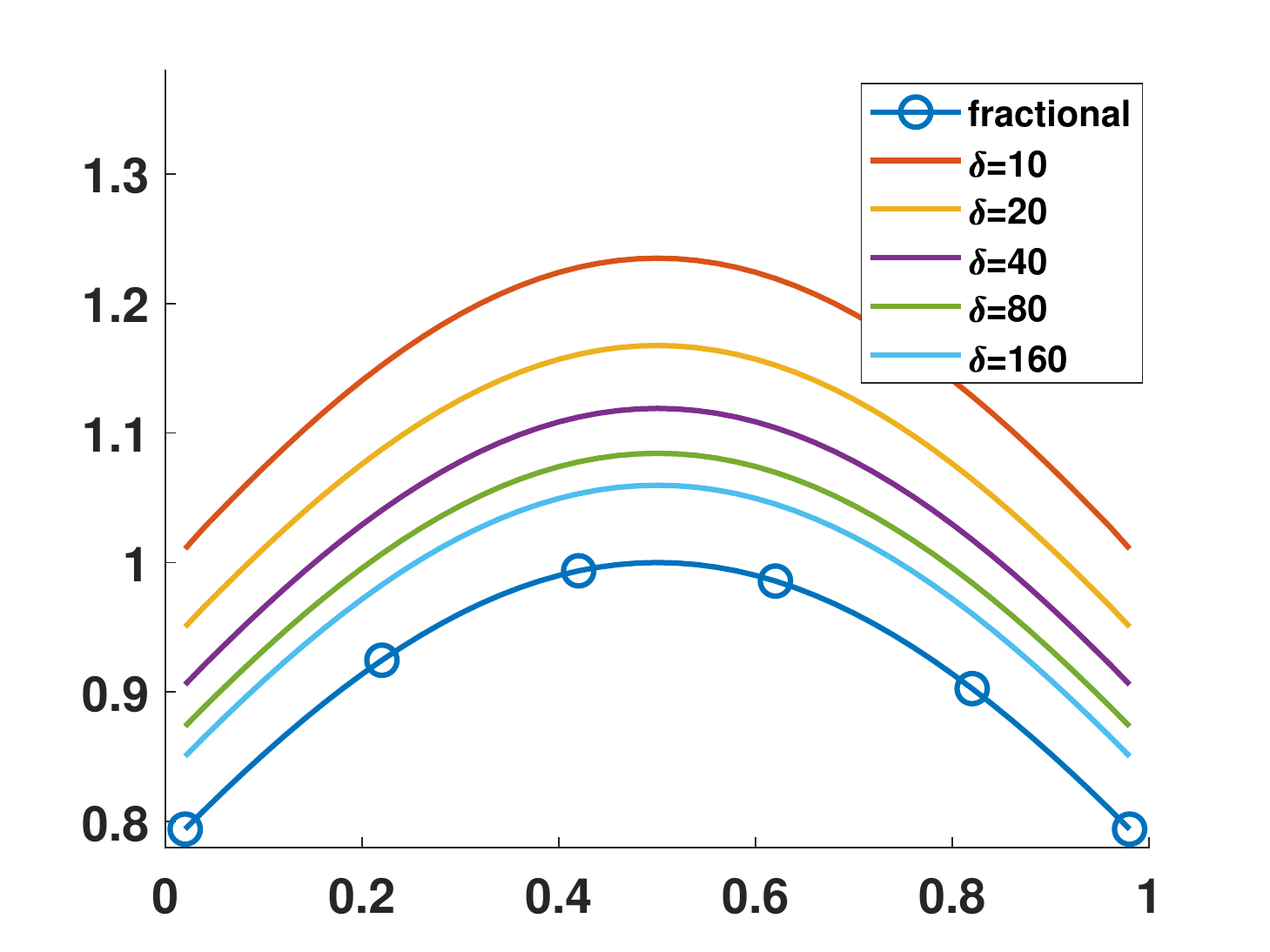}\\ 
      \includegraphics[width=0.33\textwidth]{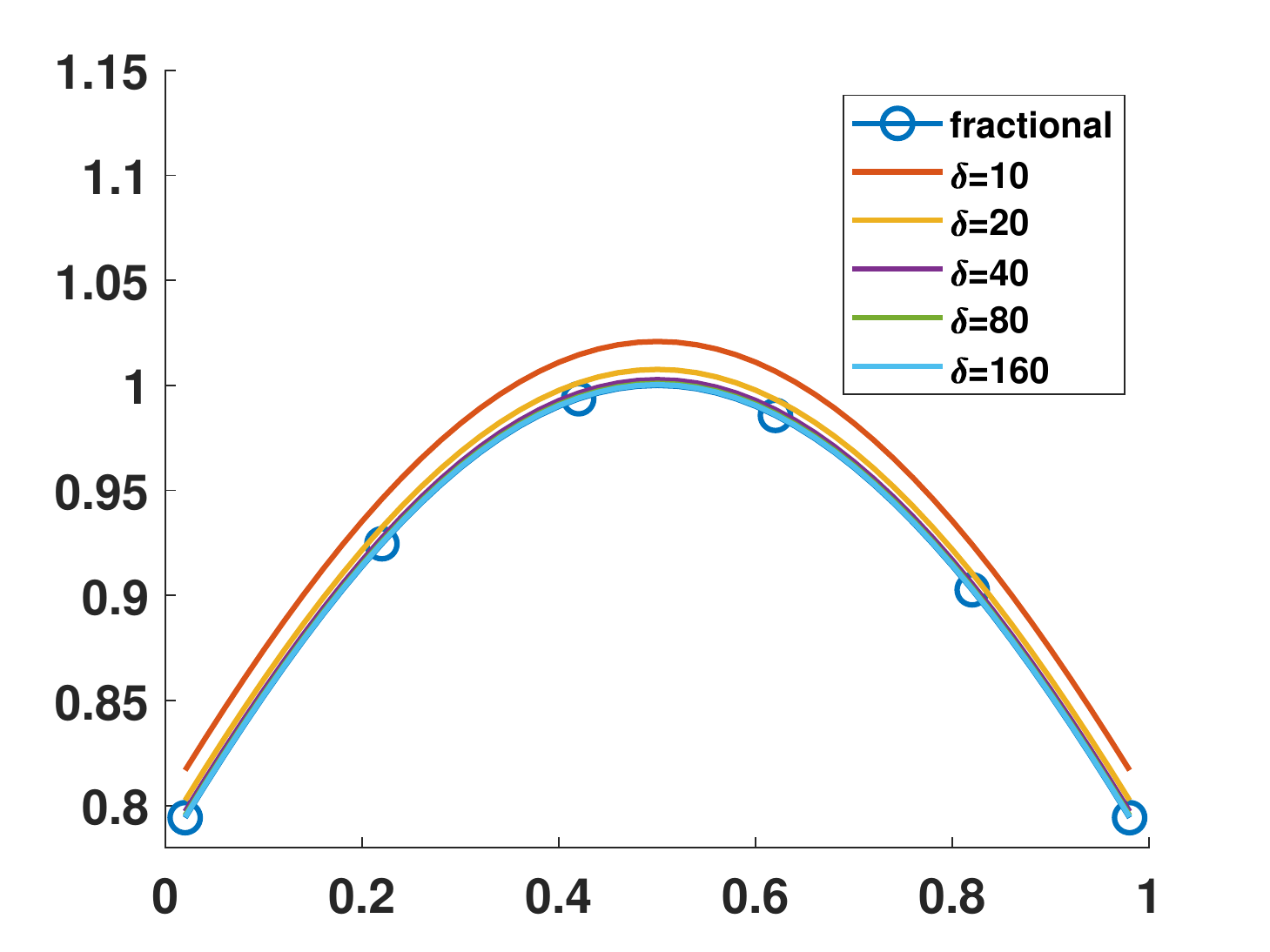}&
   \includegraphics[width=0.33\textwidth]{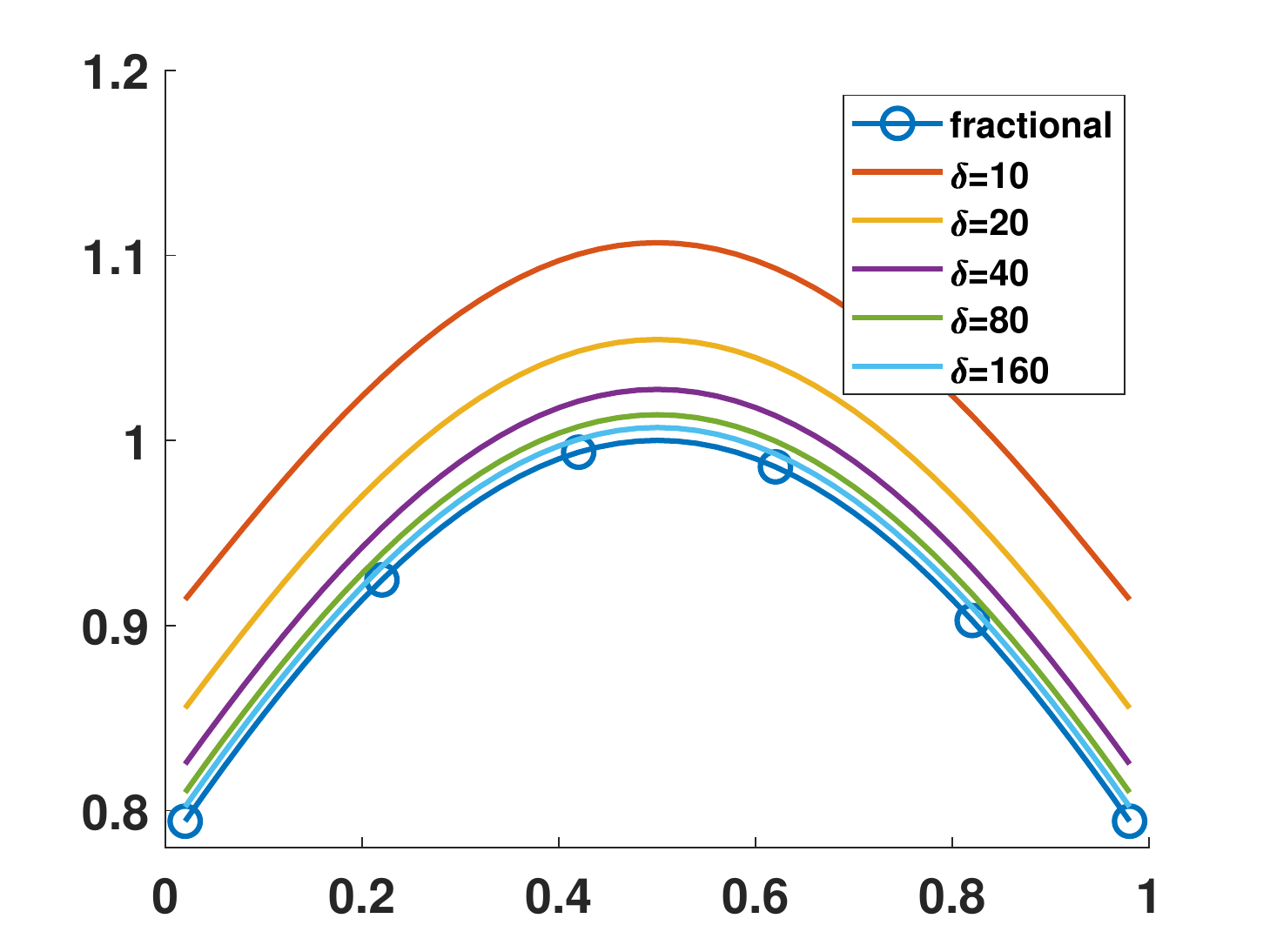} & \includegraphics[width=0.33\textwidth]{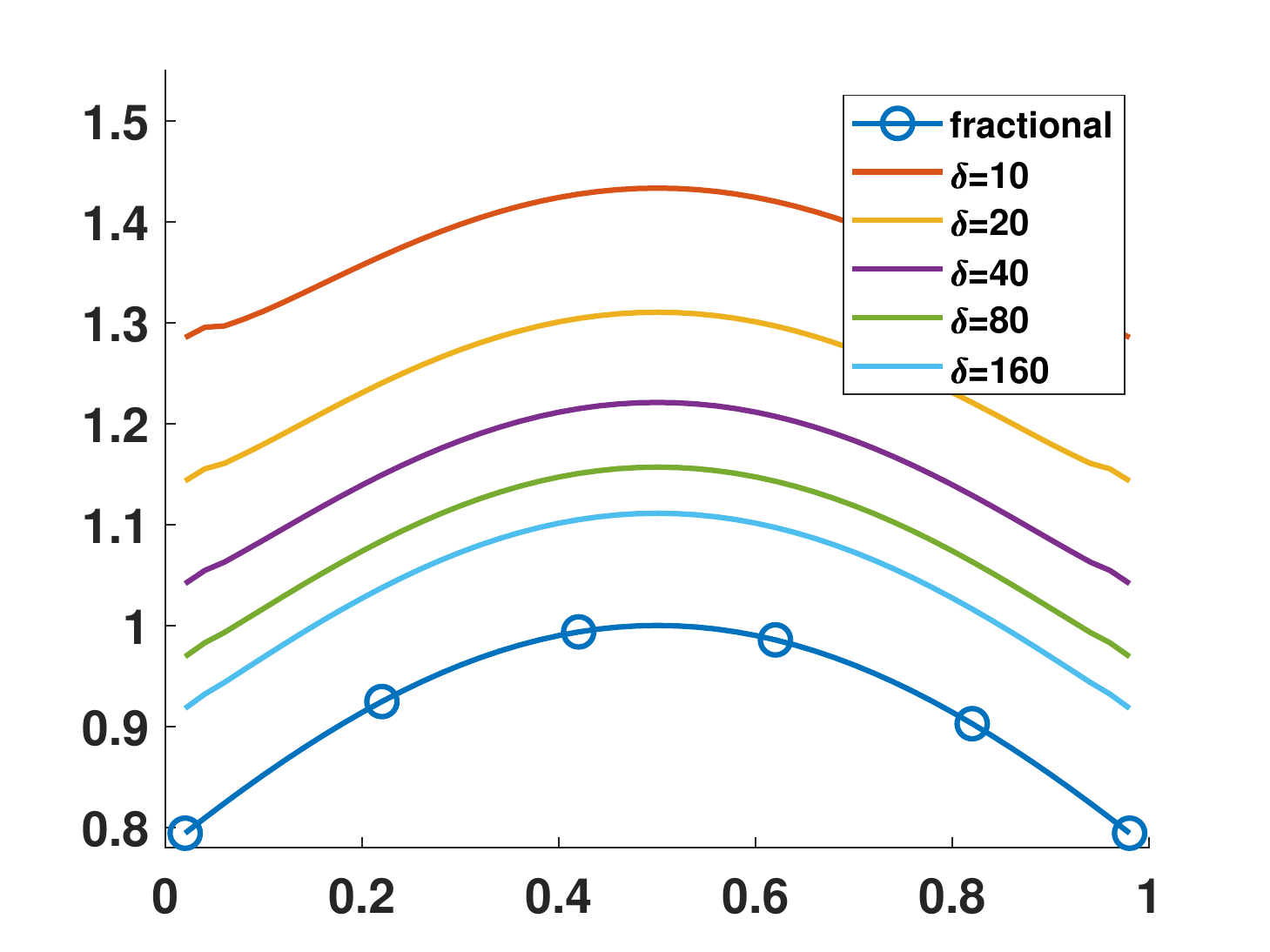}\\ 
        $s=0.75$  &  $s=0.5$ &  $s=0.25$
   \end{tabular}
   \caption{Example 3. Solutions to the nonlocal diffusion problem with inhomogeneous Neumann type boundary conditions, with operators $\mcNd$ (upper) and $\mcN_{\delta,\omghd}$ (lower).}
   \label{fig:frac}
\end{figure}

\begin{table}[htbp]
\centering
 \caption{Example 3. $\|u_\delta - u_\infty\|_{L^2(\Omega)}$ 
 with $s=0.25$, $0.5$ and $0.75$.}\label{tab:Ex3-rate}
   \vskip-5pt
\begin{tabular}{|c|c|ccccc|c|}
\hline
&$s\backslash$ $\delta$&  10 & 20 & 40 & 80&160 &Rate ($\delta$)\\
\hline
&0.25 & 2.25E-1 & 1.61E-1  & 1.15E-1 & 8.12E-2  & 5.75E-2& 0.49\\
$\mathcal N_\delta$ & 0.50 &5.55E-2  & 2.84E-2 & 1.44E-2 & 7.22E-3 & 3.62E-3& 0.98 \\
& 0.75  &1.07E-2 & 3.93E-3 & 1.41E-3 & 5.03E-4 &  1.78E-4& 1.48 \\
\hline
 & 0.25& 4.40E-1 & 3.15E-1  & 2.24E-1 & 1.59E-1 & 1.12E-1  & 0.49\\
$\mcN_{\delta,\omghd}$ & 0.50 &1.01E-1  & 5.56E-2 & 2.81E-2 & 1.41E-2 & 7.08E-3 & 0.99 \\
 &0.75  &2.11E-2 & 7.69E-3 & 2.76E-3 & 9.84E-3 & 3.48E-3& 1.48 \\
\hline
\end{tabular}
\end{table}

\section{Conclusion}

 The discussion here offers a glimpse into some of the recent studies of nonlocal models with a finite range of interactions defined on a bounded domain involving various types of nonlocal boundary conditions. In particular, we 
 provide a consistent presentation of such models to classical PDEs, seen as the local limits, and fractional PDEs, seen as the limit with an infinite range of interactions. 
 Obviously, there are many more known results in the literature and more interesting and open questions to be considered. For example, while the associated nonlocal boundary conditions are illustrated for problems defined in a bounded domain, the
inhomogeneous data are only prescribed in abstract function spaces. It is interesting to give more precise characterizations of these spaces by applying recently developed trace and extension theorems for the nonlocal function spaces
\cite{Trace21,dyda2019function}.  
Also, while we stress the consistent local and fractional limits, one can also further examine how the rates of convergence depend on the model and data in these limits \cite{dy19jsc,du2019uniform,d2020physically}. Moreover, the formulations are 
 only presented in the context of time-independent scalar equations, one can consider the extensions to nonlinear systems and evolution equations, particularly those connected to peridynamics \cite{Silling00,dyz17dcdsb,dtt18je,du19cbms}. 
 Another interesting topic, as previously mentioned in the discussion, is on the connection of the nonlocal problems to the stochastic process and their limits, from the local diffusion equation and Brownian motion to fractional diffusion and L\'evy jump processes \cite{Dipierro:2017,DuLZ:2020}.
 
 In addition, one may consider the numerical analysis of approximation methods and discuss, among many practical issues, the issue of asymptotic compatible approximation \cite{TiDu14}. Further numerical experiments can also be carried out to provide more comparisons of the nonlocal models and their various limits.

\bibliography{nonlocal}

\end{document}